\newtheorem{thm}{Theorem}[section]
\newtheorem{theorem}{Theorem}[section]
\newtheorem{lemma}[thm]{Lemma}
\newtheorem{corollary}[thm]{Corollary}
\newtheorem{proposition}[thm]{Proposition}
\newtheorem{questions}[thm]{Questions}
\newtheorem*{thm8.1}{Theorem~8.1}
\newtheorem*{sublem*}{Sublemma}
\theoremstyle{definition}
\newtheorem{definition}[thm]{Definition} 
\newtheorem{notn}[thm]{Notation}
\newtheorem{ex}[thm]{Example}
\newtheorem{remark}[thm]{Remark}
\theoremstyle{remark}
\newcounter{remarks}
\newcounter{enumitemp}
\newenvironment{enumeratecontinue}{
\setcounter{enumitemp}{\value{enumi}}
\begin{enumerate}
\setcounter{enumi}{\value{enumitemp}}
}
{
\end{enumerate}
}
\newcommand\pref[1]{(\ref{#1})}
\DeclareMathOperator{\Fix}{\mathsf{Fix}}
\newcommand{\R}{{\mathbb R}}
\newcommand{\Z}{{\mathbb Z}}
\newcommand{\f}{{F_n}}
\newcommand{\Out}{\mathsf{Out}}
\newcommand{\Aut}{\mathsf{Aut}}
\newcommand{\Hom}{\mathsf{Hom}}
\newcommand{\Stab}{\mathsf{Stab}}
\newcommand{\upg}{UPG}
\newcommand{\vf}{\mathsf{VF}}
\newcommand{\ti} {\tilde}
\newcommand{\cR}{\mathcal R}
\def\<{\prec}
\def\bN{\mathbb N}
\def\cC{\mathcal C}
\def\cH{\mathcal H}
\def\sgs{\mathcal K}	
\def\sg{K}	
\def\st{St}
\def\Krstic{Krsti\'{c}}
\def\gph{\Gamma}
\DeclareMathOperator{\Ker}{Ker}
\def\bX{\mathbf X}
\def\bY{\mathbf Y}
\def\bZ{\mathbf Z}
\def\bW{\mathbf W}
\def\sfY{{\mathsf Y}}
 \def\Y{Y}
 \def\i{m} 
 \def\Rn{R_n}
 \def\group{{\mathsf G}}
\def\Y{{\mathsf Y}}
\def\mg{{\tau}}
\def\Gmg{{\sigma}}
\def\pf{presentation family}
\def\gmc{{GMc}}
\author{Mladen Bestvina\thanks{M.B.~gratefully acknowledges the support by the National Science Foundation
under the grant number DMS-1905720}, Mark Feighn and Michael Handel}
\title{A McCool Whitehead type theorem for finitely generated subgroups of $\Out(F_n)$}
\date{\today}
\begin{document}
\maketitle

\begin{abstract}
S.~Gersten announced an algorithm that takes as input two finite sequences $\vec K=(K_1,\dots, K_N)$ and $\vec K'=(K_1',\dots, K_N')$ of conjugacy classes of finitely generated subgroups of $\f$ and outputs:
\begin{enumerate}
\item
 {\tt YES} or {\tt NO} depending on whether or not there is an element $\theta\in \Out(\f)$ such that $\theta(\vec K)=\vec K'$ together with one such $\theta$ if it exists and
\item
a finite presentation for the subgroup of $\Out(\f)$ fixing $\vec K$. 
\end{enumerate}
S.~Kalajd{\v{z}}ievski published a verification of this algorithm. We present a different algorithm from the point of view of Culler-Vogtmann's Outer space.

New results include that the subgroup of $\Out(\f)$ fixing $\vec K$ is of type $\vf$, an equivariant version of these results, an application, and a unified approach to such questions. 
\end{abstract}

\tableofcontents

\section{Introduction} \label{s:Introduction}

Let $\sgs$ be the set of conjugacy classes of finitely generated subgroups of $\f$  and let $X_\sgs$ be the set of finite sequences of elements of $\sgs$.  For our first main result, we consider the action of  $\Out(F_n)$ on  $X_\sgs$. For $\vec K:=(K_1,\dots,, K_N)\in X_\sgs$, the subgroup $\gmc(\vec K)$ of $\Out(\f)$ fixing $\vec K$ in $X_\sgs$ is called a {\it generalized McCool group of $\f$}. See Section~\ref{s:compare} for more on terminology.

\begin{theorem}\label{T1}
There is an algorithm that takes as input two finite sequences $\vec K=(K_1,\dots, K_N)$ and $\vec K'=(K_1',\dots, K_N')$ of conjugacy classes of finitely generated subgroups of $\f$ and outputs:
\begin{enumerate}
\item \label{item:T1W}
 {\tt YES} or {\tt NO} depending on whether or not there is an element $\theta\in \Out(\f)$ such that $\theta(\vec K)=\vec K'$ together with one such $\theta$ if it exists and
\item \label{item:T1M}
  a finite presentation for $\gmc(\vec K)$.
\end{enumerate}
Further,
\begin{enumeratecontinue}
\item\label{item:T1VFL}
$\gmc(\vec K)$ is of type $\vf$.
\end{enumeratecontinue}
\end{theorem}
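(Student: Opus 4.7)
The plan is to construct, for each $\vec K \in X_\sgs$, a contractible subcomplex $Z_{\vec K}$ of (a spine of) Culler--Vogtmann Outer space which is invariant under $\gmc(\vec K)$ and on which $\gmc(\vec K)$ acts cocompactly with finite cell stabilizers. This immediately yields part (3). A presentation for $\gmc(\vec K)$ then follows from Brown's method applied to the quotient orbi-complex $Z_{\vec K}/\gmc(\vec K)$, giving part (2). For part (1), one runs the same construction relative to both $\vec K$ and $\vec K'$ and uses Whitehead-type moves to decide whether the two orbits of marked graphs coincide.

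To build $Z_{\vec K}$, recall that for each marked graph $(G,\alpha)$ in the spine and each $K_i$ the Stallings subgroup graph $\Gamma_i$ of $\alpha^{-1}(K_i)$ admits a canonical immersion $\Gamma_i \to G$. Define a complexity $\nu(\vec K; G)$ as a suitable profile of these immersed images, for instance a lexicographically ordered tuple recording the total edge-lengths of $\Gamma_i \to G$ together with the edge count of $G$. Let $Z_{\vec K}$ be the subcomplex of the spine spanned by marked graphs minimizing $\nu(\vec K;\cdot)$. Invariance under $\gmc(\vec K)$ is immediate from the definition, and cocompactness follows because $\nu$ bounds the combinatorial size of $G$ in terms of the rank data of the $K_i$, while cell stabilizers in the spine are already finite in $\Out(\f)$.

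The key technical step is contractibility of $Z_{\vec K}$. This calls for a Whitehead/peak-reduction theorem adapted to finitely generated subgroups: any non-minimizer admits a Whitehead-type move strictly decreasing $\nu$, and the locus of minimizers is connected through $\nu$-preserving moves. Granted this, a Morse-theoretic descent along $\nu$ produces an equivariant deformation retraction of the spine onto $Z_{\vec K}$, so $Z_{\vec K}$ inherits contractibility. Parts (1) and (2) then become algorithmic exercises: enumerate cells of $Z_{\vec K}/\gmc(\vec K)$ starting from any minimizer and exploring its Whitehead-neighbors until the quotient closes, then apply Brown's presentation algorithm; and for (1), run the descent in parallel for $\vec K$ and $\vec K'$, testing whether the two minimizer orbits meet.

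The main obstacle is the relative peak-reduction statement itself. The cyclic case is classical (Whitehead, Gersten, Stallings), and Kalajd{\v{z}}ievski's verification of Gersten's algorithm handles the finitely generated subgroup case combinatorially; from the Outer-space viewpoint one must re-prove it in a form that controls how multiple Stallings core graphs simultaneously interact with a single Whitehead partition of $\f$, rules out strict local minima of $\nu$ that are not global, and identifies precisely the $\nu$-preserving moves, so that the minimizing locus is seen to be connected and locally finite. Once this lemma is in hand, the $\vf$ property and the algorithmic consequences follow by the now-standard machinery for groups acting cocompactly on contractible complexes with finite stabilizers.
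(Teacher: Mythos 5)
Your high-level architecture --- a contractible, $\gmc(\vec K)$-invariant, cocompact subcomplex of the spine built from Stallings-graph volume minimizers, followed by Brown's method for the presentation and Whitehead moves for the orbit problem --- is the same as the paper's, which packages these conclusions as a ``presentation family'' (Definition~\ref{defn:gfp} together with Proposition~\ref{p:gfp}). You also correctly identify contractibility of the minimizing subcomplex as the crux. But the conditions you propose as the content of the key lemma are not strong enough to yield it.

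You claim that ``any non-minimizer admits a Whitehead-type move strictly decreasing $\nu$'' together with ``the locus of minimizers is connected through $\nu$-preserving moves'' gives, via Morse-theoretic descent, a deformation retraction of the spine onto $Z_{\vec K}$. This does not follow. Building the retraction rose-by-rose down the well-ordering requires that at each non-minimal rose $\rho$ the set $St(\rho)\cap L_{<\rho}$ (the descending link) be \emph{contractible}, not merely nonempty; connectedness of the global minimizing locus says nothing about this local datum, and without it the inductive collapse of stars is not a homotopy equivalence. The actual technical heart is the analogue of Proposition~\ref{kv:prop}, established in the paper as Proposition~\ref{p:main gersten} by following Vogtmann's poset-topology argument, and the single place where Stallings graphs behave differently from circles is the submodularity inequality for the modified star-graph norm, $|A\cap B|_\sgs + |A\cup B|_\sgs \le |A|_\sgs + |B|_\sgs$ (Lemma~\ref{l:11.2}). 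That inequality, not a peak-reduction theorem, is the missing input; the paper even remarks explicitly that it does not use peak reduction.

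Two smaller issues. The cocompactness reasoning is circular: every marked graph in the spine has at most $3n-3$ edges, so ``$\nu$ bounds the combinatorial size of $G$'' is automatic. What you actually need --- and what also underlies the Whitehead-move enumeration for part~(1) --- is Proposition~\ref{p:fix}\pref{i:fix orbits}: two roses carry equivalent marked Stallings graphs for $\vec K$ if and only if they lie in the same $\gmc(\vec K)$-orbit, combined with the finiteness of the set of equivalence classes of Stallings graphs of a given volume. Also, ``the subcomplex spanned by marked graphs minimizing $\nu$'' is not quite the right object (read literally it is a discrete set of roses); the relevant subcomplex is the union of the stars of the minimizing roses, which is what the paper takes for $L_{\vec K}$.
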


If one replaces $\sgs$ with the set $\cC$ of   conjugacy classes of elements of $\f$,  then this theorem is classical: \pref{item:T1W}  is due to  J.H.C.~Whitehead \cite{jhcw:equivalent, jhcw:certain} (who also computes a finite generating set for the stabilizer of the elements);  \pref{item:T1M}  is due to McCool  \cite{jm:fp}; and \pref{item:T1VFL} is due to Culler and Vogtmann \cite{cv:moduli}.

Items \pref{item:T1W} and \pref{item:T1M} of Theorem~\ref{T1} were announced by Gersten \cite{sg:whitehead} and proved by Kalajd{\v{z}}ievski \cite{sk:gersten}. Gersten's result plays an important role in the solution \cite{fh:upg} of the conjugacy problem for UPG elements of $\Out(\f)$. A motivation for this paper was an accessible proof of Gersten's result from the point of view of Culler-Vogtmann's Outer space.

The subgroup of $\Out(G)$ fixing a given finite sequence of conjugacy classes of elements in $\f$ has been termed an {\it elementary McCool group of $\f$} by Guirardel-Levitt \cite{gl:mccool}. See also Section~\ref{s:compare}. They have announced \cite{gl:outboundary} a structure theorem for the stabilizers of points in the boundary of Outer space where the key terms are elementary McCool groups of $\f$. In particular, they show that these stabilizers are of type $\vf$, that is, some finite index subgroup has a finite Eilenberg-MacLane space. Note that, up to finite index, an elementary McCool group is  a generalized McCool group.

For a group $\group$,   $\Out(\f)$ acts on the set $\Hom\big(\group,\Out(\f)\big)$ of representations via
$\theta(\alpha)=i_\theta\circ \alpha$ {where $i_\theta$ denotes conjugation by $\theta$.   For our second main result, we consider the action of $\Out(\f)$
  on the product of $\Hom\big(\group,\Out(\f)\big)$ and $X_\sgs$, obtaining the following generalization of Theorem~\ref{T1}. 
  
\begin{theorem}[Generalized Theorem~\ref{T1}] \label{T2}
For any finite group $\group$, there is an algorithm that takes as input two homomorphisms $\alpha,  \alpha' : \group\to  \Out(\f)$ and two finite sequences   $\vec K=(K_1,\dots, K_N)$ and $\vec K'=(K_1',\dots, K_N')$ of conjugacy classes of finitely generated subgroups of $\f$ and outputs:
\begin{enumerate}
\item \label{item:T2W}
 {\tt YES} or {\tt NO} depending on whether or not there is an element $\theta\in \Out(\f)$ such that  $\theta^{-1}\alpha(g)\theta=\alpha'(g)$ for all $g \in \group$ and such that $\theta(\vec K)=\vec K'$. Further the algorithm finds one such $\theta$ if it exists.
\item \label{item:T2M}
  a finite presentation for the subgroup of $\Out(\f)$  fixing $\vec K$ and commuting with each element of $\alpha(\group)$.
\end{enumerate}
Further,
\begin{enumeratecontinue}
\item\label{item:T2VFL}
 The  subgroup of $\Out(\f)$  fixing $\vec K$ and commuting with each element of $\alpha(\group)$ is $\vf$. In the terminology of Section~\ref{s:compare}, equivariant generalized McCool groups of $\f$ are of type $\vf$.
 \end{enumeratecontinue}
\end{theorem}

\begin{remark} Even though Theorem~\ref{T2} implies Theorem~\ref{T1}, we include a  proof of the latter to serve as a less technical warm-up for that of Theorem~\ref{T2}.
\end{remark}

If one replaces $\sgs$ with $\cC$,  then   Theorem~\ref{T2}\pref{item:T2W} is due to  \Krstic, Lustig, and Vogtmann \cite[Theorem~1.1]{klv:conjugacy}. They go on in that same paper to extend the solution  to the conjugacy problem for Dehn twists \cite{cl:conjugacy} to  all linear growth elements of $\Out(\f)$. Theorem~\ref{T2} should be useful in extending the solution of the \upg\ conjugacy problem to all polynomially growing elements of $\Out(\f)$. See \ \cite[Theorem~A]{sk:gersten} for some progress toward Theorem~\ref{T2}\pref{item:T1VFL}.

Following  \cite{cv:moduli} and \cite {kv:equivariant}, we associate a subcomplex of the spine of Outer space to each  pair $(\vec K,\alpha)$ (or just $\vec K$ in the context of Theorem~\ref{T1}) and prove that this subcomplex is contractible.   We follow the proofs in \cite{kv:contractible} (which revisits \cite{cv:moduli})   and    \cite{kv:equivariant}  closely.  In those papers, $\sgs$ is replaced by $\cC$ and the construction of the subcomplex is based upon the length $\|\tau\|_w$ of the realization of $w \in \cC$ in a marked graph $\tau$.  We replace this with the volume   $\|\tau\|_K$ of the Stallings graph determined by  $K \in \sgs$ and a marked graph $\tau$; see Section~\ref{s:Stallings}.  Most of the arguments in the proof of contractibility are unaffected by this change and we do not  repeat their proofs here.   The elements of the proofs that are affected are these. In the non-equivariant case  ($\group = \{1\}$),  i.e. Theorem~\ref{T1}, {\cite[Lemma~11.2]{kv:contractible}} must be replaced and we do this in Lemma~\ref{l:11.2}.  In the equivariant case,   i.e. Theorem~\ref{T2},  we replace \cite[Lemmas 7.1 and 7.2]{kv:equivariant} with    Lemmas~\ref{l:7.2} and \ref{l:7.1} respectively.   This is the most technical part of the paper and is carried out in Section~\ref{s:combinatorial}.

With contractibility in hand, we deduce all three items in our main theorems at once in a simple, but seemingly new, way.  For example,  we do not use peak reduction. The keys here are Definition~\ref{defn:gfp} and Proposition~\ref{p:gfp}.  See also {Proposition~\ref{presentation family 1}.

We end the paper with an application. Suppose $K$ is the conjugacy class of a finitely generated subgroup of $\f$ that is its own normalizer. There is then a natural restriction map from $\gmc(K)$ to $\Out(K)$. The image $Ext(\f;K)$ of this map is the subgroup of $\Out(K)$ of elements that extend to $\Out(\f)$. 
\begin{thm8.1}[special case]
With $K$ as above, $Ext(F_n;K)$ is commensurable to a finite product of
  generalized McCool groups. In particular, it is of type $\vf$.
\end{thm8.1}

We thank the referee for a careful reading and helpful comments.

\section{McCool Groups and Generalized McCool Groups}\label{s:compare}
In this section, added at the suggestion of the referee and motivated by \cite{gl:mccool}, we slightly generalize Theorem~\ref{T2} in Theorem~\ref{T3}. A reader interested only in the proofs of Theorems~\ref{T1} and \ref{T2} can skip this section. Some terminology is needed.

\begin{definition}[{\cite[Section~2]{gl:mccool}}]
Let $G$ be a group and $H$ a subgroup. Say that $\phi\in\Out(G)$ {\it acts trivially on}, respectively {\it preserves}, the conjugacy class $[H]$ of $H$ in $G$ if there is a representative $\Phi\in\Aut(G)$ for $\phi$ such that $\Phi$ is the identity on $H$, respectively such that $\Phi(H)=H$. Note that if $\phi$ acts trivially on $[H]$ then $\phi$ preserves $[H]$, but not necessarily vice versa. 
\end{definition}

\begin{definition}[cf.~{\cite[Definition~2.1]{gl:mccool}}]\label{d:mccool}
Let $G$ and $\group$ be groups, $\alpha:\group\to \Out(G)$ a homomorphism, and $\vec H=\{[H_i]:i\in I\}$ and $\vec K=\{[K_j]:j\in J \}$ indexed sets of conjugacy classes of subgroups of $G$. We denote by $\Out(G; \vec H^{(t)}, \vec K, \alpha)$ the subgroup of $\Out(G)$ consisting of outer automorphisms $\phi$ that:
\begin{itemize}
\item
act trivially on each element of $\vec H$;
\item
preserve each element of $\vec K$; and
\item
commute with each element of $\alpha(\group)$.
\end{itemize}
If $\group$ is trivial, we often write $\Out(G;\vec H^{(t)}, \vec K)$ instead of $\Out(G; \vec H^{(t)}, \vec K, \alpha)$. If $I$ is finite, then we call $\vec H$ a finite sequence.

\end{definition}

\begin{theorem}[{\cite[first part of Theorem~3.4]{gl:mccool}}]\label{t:gl}
If $G$ is a toral relatively hyperbolic group and $\vec H$, $\vec K$ are finite sequences of finitely generated subgroups, with every element of $\vec K$ abelian, then $\Out(G;\vec H^{(t)}, \vec K)$ is of type $\vf$.
\end{theorem}

\begin{theorem}[Generalized Theorem~\ref{T2}] \label{T3}
There is an algorithm that takes as input:
\begin{itemize}
\item
a finite group $\group$ and two homomorphisms $\alpha,  \alpha' : \group\to  \Out(\f)$;
\item
two finite sequences $\vec H$, $\vec H'$ of conjugacy classes of finitely generated subgroups of $\f$; and
\item
two finite sequences $\vec K$, $\vec K'$ of conjugacy classes of finitely generated subgroups of $\f$
\end{itemize}
and outputs:
\begin{enumerate}
\item \label{item:T3W}
 {\tt YES} or {\tt NO} depending on whether or not there is an element $\theta\in \Out(\f)$ such that
 \begin{itemize}
 \item
 $\theta^{-1}\alpha(g)\theta=\alpha'(g)$ for all $g \in \group$;
 \item
 $\theta(\vec H)=\vec H'$; and
 \item
 $\theta(\vec K)=\vec K'$. 
 \end{itemize}
 Further the algorithm finds one such $\theta$ if it exists.
\item \label{item:T3M}
  a finite presentation for $\Out(\f;\vec H^{(t)},\vec K,\alpha)$.
\end{enumerate}
Further,
\begin{enumeratecontinue}
\item\label{item:T3VFL}
 $\Out(\f;\vec H^{(t)},\vec K,\alpha)$ is of type $\vf$.
 \end{enumeratecontinue}
\end{theorem}

After a lemma, we prove the theorem.

\begin{lemma}\label{l:T3}
There is an algorithm with input a finite sequence  $$\vec H=([H_1], [H_2], \dots, [H_k])$$ of conjugacy classes of subgroups of $\f$ and output a finite set $\{[g_i]\}$ of conjugacy classes of elements of $\f$ such that $\phi\in\Out(\f)$ acts trivially on each $[H_j]$ iff $\phi$ fixes each $[g_i]$.
\end{lemma}

\begin{proof}
Suppose we have a finite sequence $\vec C$ of elements of conjugacy classes of elements of $\cup_j H_j$. Use Whitehead \cite{jhcw:equivalent, jhcw:certain} or McCool \cite{jm:fp}  to compute a finite generating set for the stabilizer of $\vec C$. Check whether each generator acts trivially on each $H_j$. If not, concatenate to $\vec C$ an element that is moved. In this way we get a strictly decreasing chain of elementary McCool groups of $\f$. By  
\cite[Theorem~1.5 and Corollary~1.6]{gl:mccool} such a chain is bounded and so this process terminates.
\end{proof}

\begin{remark}
\cite[Corollary~1.6]{gl:mccool} shows that such a finite set $\{g_i\}$ exists (even if $\vec H$ is an arbitrary set of not necessarily finitely generated subgroups and in the setting of toral relatively hyperbolic groups). The point of  Lemma~\ref{l:T3} is that, in its setting, finding such elements can easily be made algorithmic.
\end{remark}

\begin{proof}[Proof of Theorem~\ref{T3}]
\pref{item:T3W} follows by applying Theorem~\ref{T2}\pref{item:T2W} to the concatenation $\vec H \vec K$ of $\vec H$ and $\vec K$.

(\ref{item:T3M}, \ref{item:T3VFL}): Apply the algorithm of Lemma~\ref{l:T3} to $\vec H$ to obtain the finite set $\{[g_i]\}$. Let $\vec H_0:=([\langle g_1\rangle], [\langle g_2\rangle], \dots)$ be the corresponding finite sequence of conjugacy classes of cyclic subgroups of $\f$. $\hat{\mathsf O}:=\Out(\f;\emptyset, \vec H_0^{}\vec K,\alpha)$ acts on $\{[g_i], [g_i^{-1}]\}$ with kernel $\mathsf O:=\Out(\f;\vec H^{(t)},\vec K,\alpha)$. By Theorem~\ref{T2}\pref{item:T2VFL}, $\hat{\mathsf O}$, and hence also $\mathsf O$, has type $\vf$ thus proving \pref{item:T3VFL}. Using the finite presentation of $\hat{\mathsf O}$ provided by Theorem~\ref{T2}\pref{item:T2M}, we can compute the the image of $\hat{\mathsf O}$ in the permutation group of $\{[g_i], [g_i^{-1}]\}$ and then use the Reidemeister-Schreier theorem (see for example the section on finite presentations and finite index subgroups of \cite{fh:upg} in this setting), to finitely present $\mathsf O$.
\end{proof}

One could ask about generalizations of Theorem~\ref{T3}.

\begin{questions}
Under what conditions is $\Out(\f;\vec H^{(t)};\vec K;\alpha)$ of type $\vf$? Is there a sequence $\vec K=\{[K_j]:j\in \mathbb N\}$ of conjugacy classes of finitely generated subgroups of $\f$ such that $\Out(\f;\emptyset,\vec K)$ does not have type $\vf$? Does $\Out\big(\f;\emptyset, ([K_1])\big)>\Out\big(\f;\emptyset, ([K_1], [K_2])\big)>\dots$ always stabilize?
\end{questions}

\begin{remark}
\begin{itemize}
\item
Using \cite[Corollary~1.6]{gl:mccool}, Theorem~\ref{T3}\pref{item:T3VFL} remains true if the condition on $\vec H$ is replaced by the assumption that $\vec H$ is an arbitrary indexed set of conjugacy classes of (not necessarily finitely generated) subgroups of $\f$. 
\item
\def\torelli{\mathcal T_n}
For $n\ge 3$, there is a finite sequence $\vec K$ of conjugacy classes of (infinitely generated) subgroups of $\f$ so that $\Out(\f;\emptyset, \vec K)$ does not have type $\vf$. Indeed, the Torelli group $\torelli$ of $\Out(\f)$ is torsion free and does not have type $\mathsf F$ \cite{bbm:torelli} (for $n=3$, this is Krsti\'c and McCool \cite{km:torelli} and, for $3\leq n<100$, this is Smillie-Vogtmann \cite{sv:euler}). $\torelli$ is the kernel of the action of $\Out(\f)$ on $H_1(\f;\Z)$ and so equivalently on the abelian group of homomorphisms $\f\to\Z$, which is isomorphic to $\Z^n$. If $f$ is a homomorphism from $\f$ to $\Z$ and $\phi\in\Out(\f)$ preserves its kernel, then there is an induced $\overline\phi\in\Out(\Z)$ such that $f\circ\phi=\overline \phi\circ f$, and so $f\circ\phi=\pm f$. It follows that if $\{f_1, f_2,\dots, f_n\}$ is a basis then $\torelli$ has index $2^n$ in $\Out\big(\f;\emptyset, (\Ker(f_1), \Ker(f_2),\dots)\big)$. Hence the latter group does not have type $\vf$.
\end{itemize}
\end{remark}



We end this section with a bit more terminology.

\begin{definition}[{cf.~\cite[Definitions~1.1, 1.2, and 2.1]{gl:mccool}}]\label{d:mccool2}
Let $G$, $\group$, $\vec H$, $\vec K$, and $\alpha$ be as in Definition ~\ref{d:mccool}. Additionally, let $\vec C$ denote a set conjugacy classes of elements of $G$.
\begin{itemize}
\item
$Mc_G(\vec C)$ denotes the subgroup of $\Out(G)$ fixing each $[c]\in\vec C$. Equivalently, $Mc_G(\vec C)=\Out(G;\vec H^{(t)},\emptyset)$ where $\vec H=\{[\langle c\rangle] \mid [c]\in\vec C)\}$. If $\vec C$ is finite, then we say that $Mc_G(\vec C)$ is an {\it elementary McCool group of $G$ (or of $\Out(G)$)}.
\item
$Mc_G(\vec H):=\Out(G;\vec H^{(t)},\emptyset)$. If $\vec H$ is finite with every element finitely generated then we say that $Mc_G(\vec H)$ is a {\it McCool group of $G$ (or of $\Out(G)$)}.
\item
$\gmc_G(\vec K):=\Out(G;\emptyset,\vec K)$. If $\vec K$ is finite with every element finitely generated then we say that $\gmc_G(\vec H)$ is a {\it generalized McCool group of $G$ (or of $\Out(G)$)}.
\item
If $\group$ is not trivial, then {\it equivariant elementary, equivariant, or equivariant generalized McCool groups} are defined analogously.
\item
If $G=\f$ then we usually suppress the subscript.
\end{itemize}
\end{definition}

\begin{remark}
\begin{itemize}
\item
Guirardel-Levitt \cite[Corollary~1.6]{gl:mccool} show that if $G$ is toral relatively hyperbolic then $Mc(\vec H)$ is a McCool group and also that any McCool group is an elementary McCool group.
\item
$Mc_G(\vec K)$ is a normal subgroup of $\gmc_G(\vec K)$. In case $G=\f$, Theorem~\ref{ext} shows that the quotient $Ext(\f;\vec K)$ is of type $\vf$.
\item
There are generalized McCool groups of $\f$ that are not commensurable in $\Out(\f)$ to a McCool group of $\f$. For example, there are $\phi\in\Out(\f)$, $n\ge 4$, preserving a free factor $F$ but without any non-trivial periodic conjugacy classes of non-trivial elements of $\f$. In this case, $\gmc([F])$ is such. Indeed, an iterate of $\phi$ appears in a given finite index subgroup of $\gmc([F])$. Since an iterate doesn't fix any non-trivial conjugacy classes, it is not contained in an elementary McCool group of $\f$. Using the first bulleted item of this remark, an iterate is therefore not contained in a McCool group of $\f$.
\end{itemize}
\end{remark}

\begin{remark}
The notations $Mc_G(\vec C)$ and $Mc_G(\vec H)$ and the definitions {\it elementary McCool group} and {\it McCool group} correspond with those in \cite{gl:mccool} with the small differences that Guirardel and Levitt use sets and we use indexed sets and that they use $\cC, \cH, \sgs$ for our $\vec C, \vec H, \vec K$ and we use $\cC$ (respectively $\sgs$) for the set of conjugacy classes of elements (respectively finitely generated subgroups) of $\f$.
\end{remark}

\begin{remark}
Except in this section, we only consider finite $\vec K$ with every element finitely generated, i.e.\ $\gmc(\vec K)$ will be a generalized McCool group. The notation $Mc(\cdot)$ and $\gmc(\cdot)$ and the terminology {\it elementary McCool, McCool}, and {\it generalized McCool group} don't return until the last section, Section~\ref{s:ext}.
\end{remark}

\section{Background} \label{s:background}
Most arguments in this paper take place in the spine of Outer space, a space introduced by Culler and Vogtmann in their seminal paper \cite{cv:moduli}. In this section we quickly recall key definitions and notation needed for these arguments. See also \cite{kv:contractible, kv:equivariant}. 

$\Rn$ denotes a specific rose, i.e.\ a graph with one vertex and $n$ edges, whose fundamental group has been identified with $\f$. A {\it marked graph} is a pair $(G,g)$ consisting of a graph $G$ and a homotopy equivalence $g:\Rn\to G$. $G$ is required to have no vertices of valence $\le 2$. Marked graphs $\mg=(G,g)$ and $\mg'=(G',g')$ are {\it equivalent} if there is cellular isomorphism\footnote{Recall that a map between $CW$-complexes is {\it cellular} if it takes $k$-skeleta to $k$-skeleta. It is a {\it cellular isomorphism} if it is a cellular homeomorphism with a cellular inverse.} $h:G\to G'$ such that $h\circ g$ and $g'$ are homotopy equivalent. We write $[(G,g)]$ for the equivalence class, but often blur the distinction between a marked graph and its equivalence class by omitting the brackets.

A homotopy equivalence is a {\it (forest) collapse} if it is of the form $c:G\to G/F$ where $F\subset G$ is a forest and $G/F$ is the quotient space obtained by collapsing components of $F$. We also say that $(G/F, c\circ g)$ is obtained from $(G,g)$ by a {\it forest collapse}. The {\it spine} of Outer space}   is defined to be the geometric realization of the partial order induced by forest collapse on the set of equivalence classes of marked graphs. $\Out(\f)$ acts on the spine: if $h:\Rn\to\Rn$ realizes $\theta\in\Out(\f)$ then $(G,g)\cdot \theta:=(G,g\circ h)$. This is a right action. We will have one occasion (in the proof of Proposition~\ref{presentation family 1}) to use the left action defined as $\theta\cdot (G,g):=(G,g)\cdot \theta^{-1}$.

$L_n$ denotes the $\Out(\f)$-subcomplex of the spine spanned by graphs with no separating edge. We usually suppress the subscripts $n$. $\cR$ denotes the set of equivalence classes of marked roses. Since every marked graph collapses to a rose, $L$ is the union of the stars of the elements of $\cR$, i.e. $L=\cup \{St(\rho)\mid \rho\in\cR\}$. Note that $\Out(\f)$ acts transitively on $\cR$. Indeed, if $(R,r)\in\cR$ and $h:R\to R_n$ is a cellular isomorphism then $h\circ r:\Rn\to\Rn$ represents some $\theta\in\Out(\f)$ and $(R_n, id)\cdot\theta=(R,r)$.

If $f:R\to R'$ is a homotopy equivalence of roses of the form $R\leftarrow G\to R'$ where each map is an edge collapse  then we say that $\rho':=(R', f\circ r)\in \cR$ is obtained from $\rho:=(R,r)\in\cR$ by a {\it Whitehead move}. Note that $St(\rho)\cap St(\rho')\not=\emptyset$ and that, for any $\theta\in\Out(\f)$, $\rho'\cdot\theta$ is obtained from $\rho\cdot\theta$ by a Whitehead move.

\section{Presentation Families}\label{s:gfp}
The main work in proving Theorems~\ref{T1} and \ref{T2} is proving that certain subcomplexes of the spine of Outer space are contractible.  In this section we describe how to complete the proofs once this has been done.

 In the model case for the following definition, $G =\Out(F_n)$, $X$ is the set $X_\cC$  of finite sequences of conjugacy classes of elements in $\f$, $L$ is the subcomplex of the spine of Outer space spanned by marked graphs without separating edges and, for $x\in X$, $L_x$ is the subcomplex of $L$ that is the union of the stars of the roses in which the length of $x$ is minimized.

\begin{definition}
\label{defn:gfp}
A $G$-set $X$ has a {\it \pf}\ if:
\begin{enumerate}
\item\label{item:properties of G}
there is an algorithm that takes as input $g_1,g_2,g_3\in G$ and outputs {\tt YES} or {\tt NO} depending on whether or not $g_1g_2=g_3$.
\item \label{item:properties of X}
there is an algorithm that takes as input $g\in G$ and $x_1, x_2\in X$ and outputs {\tt YES} or {\tt NO} depending on whether or not $gx_1=x_2$.
\item \label{item:properties of L} there is a  simplicial, properly discontinuous action of $G$ on a simplicial complex $L$ with the following properties.
\begin{enumerate}
\item  \label{item:no inversion}
$G$ acts without inverting edges of $L$.
\item  \label{item:Gvw}
For all vertices, $v, w$ of $L$, there is an algorithm that decides if there exists $g \in G$ such that $gv = w$;  If {\tt YES}  then   the algorithm outputs  the finite set $G_{v,w} =  \{g\mid gv=w\}$.
\end{enumerate}
\item  
For each $x \in X$ there is a contractible $G_x$-invariant subcomplex $L_x$ of $L$ with the following properties where $G_x:=\{g\in G\mid gx=x\}$.
\begin{enumerate}
\item\label{item:domain for Lx}
there is an algorithm that takes as input $x\in X$ and outputs a finite fundamental domain $D_x$  for 
 the $G_x$-action on $L_x$; more precisely,  $D_x$ is a finite subcomplex of $L_x$ such that $G_xD_x=L_x$ and   we are given the  set $V$ of vertices of $D_x$ together with the finite subset of $2^V$ consisting of the simplices of $D_x$.
\item
$\theta(L_x)=L_{\theta(x)}$ for all $\theta \in G$.\label{item:natural}
\end{enumerate}
\end{enumerate}

In this case, we say that   $\{L_{x}\mid x\in X\}$ is a {\it  \pf\  in $L$} for the action of $G$ on $X$.
\end{definition}

Recall that a group has type $\vf$ if some finite index subgroup has a finite Eilenberg-MacLane space. 

\begin{proposition}\label{p:gfp}
Suppose the $G$-set $X$ has a \pf\ and $G$ has a  torsion-free  subgroup of finite index. Then there is an algorithm that takes as input $x,y\in X$ and outputs
\begin{enumerate}
\item \label{item:W}
{\tt YES} or {\tt NO} depending on whether or not there is $g\in G$ such that $gx=y$ together with $g$ if it exists;
\item \label{item:M}
a finite presentation for $G_x:=\{g\in G|gx=x\}$.
\end{enumerate}
Further,
\begin{enumeratecontinue}
\item \label{item:VFL}
$G_x$ is of type $\vf$.
\end{enumeratecontinue}
\end{proposition}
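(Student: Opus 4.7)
The plan is to exploit the contractibility of each $L_x$ and the finiteness of the fundamental domain $D_x$ to reduce each of the three parts to a standard argument about a group acting on a contractible complex with finite fundamental domain, making every step effective using the four algorithmic items of Definition~\ref{defn:gfp}. The main obstacle is part \pref{item:M}: a suitable presentation theorem is classical, but to obtain an algorithm one must organize the generators (from vertex stabilizers and edge translators) and relations (from $2$-cells) so that every required check reduces to one of the three primitive algorithms supplied by Definition~\ref{defn:gfp} --- multiplication in $G$ from \pref{item:properties of G}, equality testing in $X$ from \pref{item:properties of X}, and orbit matching on $L$ from \pref{item:Gvw}.

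For \pref{item:W}, I would compute $D_x$ and $D_y$ via \pref{item:domain for Lx}, fix any vertex $v_0 \in D_x^{(0)}$, and for every vertex $w \in D_y^{(0)}$ enumerate the finite set $G_{v_0,w}$ using \pref{item:Gvw}; for each $g \in G_{v_0,w}$ I would test with \pref{item:properties of X} whether $gx = y$, outputting the first success or \texttt{NO} if no candidate works. For correctness, if some $g \in G$ satisfies $gx = y$ then by \pref{item:natural} we have $g L_x = L_y$, so $g v_0 \in L_y = G_y \cdot D_y$; writing $g v_0 = h w$ with $h \in G_y$ and $w \in D_y^{(0)}$, the element $g' := h^{-1} g$ lies in $G_{v_0, w}$ and satisfies $g' x = h^{-1} y = y$, so the search cannot miss a positive answer.

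For \pref{item:M}, I would apply the classical presentation theorem of K.\,S.~Brown for a group acting simplicially, without inversions, on a simply connected complex with finite fundamental domain and finite simplex stabilizers. By \pref{item:properties of L} and \pref{item:no inversion} the $G_x$-action on $L_x$ is simplicial, properly discontinuous, and inversion-free; $L_x$ is contractible so its $2$-skeleton is simply connected; and $D_x$ is finite. Simplex stabilizers in $G$ are finite by proper discontinuity, and each vertex stabilizer $(G_x)_v$ is computed as the subset of $G_{v,v}$ fixing $x$ via \pref{item:Gvw} and \pref{item:properties of X}. The ``edge generators'' come from the finite sets $G_{v,w}$ for pairs of vertices joined by an edge of $D_x$, and the finitely many defining relations come from the $2$-cells of $D_x$ (each a word in the generators whose equality in $G$ is checked by \pref{item:properties of G}). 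This yields an explicit finite presentation of $G_x$.

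For \pref{item:VFL}, let $\Gamma \le G$ be a torsion-free subgroup of finite index and set $H := \Gamma \cap G_x$, which has finite index in $G_x$. Proper discontinuity of the $G$-action on $L$ makes every setwise simplex stabilizer in $G$ finite, hence trivial inside the torsion-free group $H$; combined with \pref{item:no inversion}, this shows that $H$ acts freely on $L_x$. Choosing coset representatives $g_1, \dots, g_k$ of $H$ in $G_x$, the finite subcomplex $\bigcup_i g_i D_x$ is a fundamental domain for the $H$-action, so $L_x / H$ is a finite CW-complex; since $L_x \to L_x/H$ is a covering with contractible total space, $L_x/H$ is a finite $K(H,1)$, and consequently $G_x$ is of type $\vf$.
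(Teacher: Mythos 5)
Your proposal is correct and follows essentially the same route as the paper's proof: for \pref{item:VFL}, intersect $G_x$ with a torsion-free finite-index subgroup and use the finite fundamental domain to get a finite free cocompact action; for \pref{item:M}, apply Brown's presentation theorem with the three algorithmic ingredients supplied by items \pref{item:domain for Lx}, \pref{item:Gvw}$+$\pref{item:properties of X}, and \pref{item:properties of G}; and for \pref{item:W}, use $gL_x = L_y$ from \pref{item:natural} to reduce the search to the finite sets $G_{v,w}$ with $v\in D_x$, $w\in D_y$. The only (harmless) difference is that you fix a single base vertex $v_0 \in D_x$ rather than ranging over all vertices of $D_x$, which is a minor optimization.
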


\medspace

\proof  We adopt the notation of Definition~\ref{defn:gfp}. We are assuming that  the action of $G$ on $L$ is properly discontinuous and that  $G$ has a finite index torsion-free subgroup, say $G'$. It follows that the action of $G'$ on $L$, and hence the action of  $G' \cap G_x$ on $L_x$, is free.  Item \pref{item:VFL} of Proposition~\ref{p:gfp} therefore follows from Definition~\ref{defn:gfp}\pref {item:domain for Lx}, which implies that the action of $G' \cap G_x$ on $L_x$ is cocompact. 

We use K.~Brown \cite{kb:presentation} to give a finite presentation for $G_x$ based on its action on $L_x$.  Our actions do not invert edges and so we use the simplification of Brown's presentation given in \cite[Theorem 2]{afv:presentation}.
The only things needed to make this algorithmic are:
\begin{itemize}
\item
an explicit description of a fundamental domain $D_x$ for this action;
\item
for each $v,w \in D_x$, the finite set $\{g \in G_x \mid gv = w\}$; and
\item
for each $v\in D_x$, a presentation for the finite group $\{g\in G_x\mid gv=v\}$.  
\end{itemize}
These are provided, respectively,  by:
\begin{itemize}
\item
Definition~\ref{defn:gfp}\pref{item:domain for Lx};
\item
Definition~\ref{defn:gfp}\pref{item:Gvw} and the ability to determine if a given $g \in G$ is contained in $G_x$ (Definition~\ref{defn:gfp}\pref{item:properties of X});
\item
Definition~\ref{defn:gfp}\pref{item:properties of G}, which allows us to compute its multiplication table (and hence a presentation).
\end{itemize}
This completes the proof of Proposition~\ref{p:gfp}\pref{item:M}.

It remains to show that it is algorithmic to check, given $x,y \in X$, whether there is $g \in G$ such that $g(x) =y$ and to find such a $g$ if it exists. Let $D_x \subset L_x$ and $D_y \subset L_y$ be the domains given in item \pref{item:domain for Lx} of Definition~\ref{defn:gfp}.  Item \pref{item:natural} of Definition~\ref{defn:gfp} implies that if $g(x) = y$ then $g(L_x) = L_y$.    In this case,  (by pre-composing with an element of $G_x$  and post-composing with an element of $G_y$) there is such a $g \in G$ taking a vertex of $D_x$ to one of $D_y$. Using Definition~\ref{defn:gfp}\pref{item:Gvw} we then check, for each $g$ taking a vertex of $D_x$ to a vertex of $D_y$, whether $g$ takes $x$ to $y$. If we find one, then this is the desired $g$; if not then there is no $g$ taking $x$ to $y$.   This completes the proof of item \pref{item:W} of Proposition~\ref{p:gfp}.
\endproof

\section{Stallings Graphs} \label{s:Stallings}

Recall $\cC$ denotes the set of conjugacy classes of elements of $\f$. 
The constructions in \cite{cv:moduli},  \cite {kv:equivariant} and \cite{kv:contractible} are based on the length  function for an element of $\cC$ in a marked graph $\tau$.  In this section we generalize this to a volume function for an element of $\sgs$ in $\tau$ where $\sgs$ is the set of finitely generated subgroups of $\f$.

Each $w \in \cC$ is represented in a marked graph $\mg = (G,g)$ by a unique circuit, which we view as an immersion $ w_{\mg} : S^1\to G$.  Subdivide $S^1$  so that $ w_{\mg}$ maps each edge in $S^1$   to an edge in $G$ and define the {\it length} $ \|\mg\|_{w}$ of $w$ to  be the number of edges in the subdivided $S^1$. Equivalently, $ \|\mg\|_{w}$ is the number of edges of $G$ (counted with multiplicity) crossed by the circuit $ w_{\mg}$.    If $\mg = (G,g)$ and  $\mg' = (G',g')$ represent the same vertex in the spine  of Outer space then there is a cellular isomorphism $h :G \to G'$  such that $ w_{\mg'} = h w_{\mg} :S^1 \to G'$ and so $ \|\mg'\|_{w} =  \|\mg\|_{w}$.  We may therefore view $ \|\mg\|_{w}$ as an invariant of the point in the spine represented by $\mg$.

Fix a rose $\rho = (R,r)$  and let $H$ be the set of half-edges of $R$, equivalently the set of oriented edges of $R$.  For each $w \in \cC$,    divide  $S^1$, thought of as the domain of $w_\rho$,    into {\it pieces} $\Y = \{Y\}$ by snipping each edge at its midpoint.  Each  $Y $  is a topological arc  that is composed  of two half-edges;  using the half-edges,  label the endpoints of $Y$ by elements of $H$.    To construct the {\it star graph for $\rho$ and $w$}  (see \cite[Section~9]{kv:contractible}), start with $2n$ vertices $V$ labeled by the elements of $H$ and then add 
$\|\rho\|_w$  edges, one for each $Y \in \Y$, with endpoints attached to $V$ according to the labeling  on the endpoints of $Y$. 

\medskip

We now modify the definitions of length function and star graph so that they apply to all elements of $\sgs$, in fact all elements of $X_\sgs$, and not just those of rank one.

A {\it Stallings graph (over $G$)} is a map $\i:\Gamma\to G$ such that: 
\begin{itemize}
\item
$\Gamma$ and $G$ are finite graphs with no vertices of valence $\le 2$;
\item
$G$ is connected;
\item
the components $(\Gamma_1,\dots,\Gamma_N)$ of $\Gamma$ are ordered; and
\item
$\i$ takes each open edge of $\Gamma$ homeomorphically to an open edge of $G$ and the induced maps on the stars of vertices are injective. \end{itemize}
We view each edge of $\Gamma$ as {\it labeled} by its image edge in $G$. Stallings graphs $\i:\Gamma\to G$ and $\i':\Gamma'\to G'$ are {\it equivalent} if there is a cellular isomorphism $h:G\to G'$ with a lift $\eta:\Gamma\to\Gamma'$, also a cellular isomorphism, that restricts to order preserving maps $\eta_i:\Gamma_i\to \Gamma'_i$ on components.
\begin{equation*}
\begindc{\commdiag}[500]
\obj(1,0)[20]{$G'$}
\obj(0,1)[01]{$\Gamma$}
\obj(1,1)[21]{$\Gamma'$}
\obj(0,0)[00]{$G$}
\mor{01}{21}{$\eta$}[\atleft,\solidarrow]
\mor{00}{20}{$h$}[\atleft,\solidarrow]
\mor{01}{00}{$\i$}[\atright,\solidarrow]
\mor{21}{20}{$\i'$}[\atleft,\solidarrow]
\enddc
\end{equation*}

\begin{ex} \label{e:inequivalent}
Label the edges of the rose $R_2$ by the letters $a$ and $b$. The circle $\Gamma$ with three edges labeled $aab$ mapping to $R_2$ preserving labels gives a Stallings graph $\i:\Gamma\to R_2$. Similarly the circle $\Gamma'$ labeled $bba$ gives $\i':\Gamma'\to R_2$. Note that $\i$ and $\i'$ are equivalent, but the two-component Stallings graphs given by $\Gamma\sqcup\Gamma$ and $\Gamma\sqcup\Gamma'$ are not equivalent.
\end{ex}

Recall from the Section~\ref{s:Introduction} that $X_\sgs$ denotes the set of finite sequences in $\sgs$.  A {\it marked Stallings graph} is a pair $(\i,\tau)$ where $\i:\Gamma\to G$ is a Stallings graph  and $\tau=(G,g)$ is a marked graph. We say $(\i,\tau)$ {\it represents $\vec{K}=(K_1, K_2,\dots, K_N) \in X_\sgs$ } if $K_i$ is the image of the induced monomorphism $(\i_i)_*:\pi_1(\Gamma_i) \to  \pi_1(G)$ (where $\pi_1(G)$ has been identified with $\f$ via $\tau$). For each $\vec K  \in X_\sgs$  and  marked graph $\mg=(G,g)$, there is a marked Stallings graph $(\vec K_\tau\to G,\tau)$ that represents $\vec K$, constructed as follows. If $p: \ti G \to G$ is the universal cover of $G$ and $T_{A_i}$ is the minimal subtree for a subgroup $A_i$ representing $K_i$ then $(\vec K_\mg)_i$ is the quotient  of $T_{A_i}$ by the action of $A_i$ and $(K_\mg)_i \to G$ is the immersion induced from $p$. We sometimes abuse notation and write $\vec K_\tau$ for the Stallings graph $\vec K_\tau\to G$.

\begin{proposition}\label{p:fix}
Let $\tau=(G,g), \tau'=(G',g')$ be marked graphs representing elements in the spine of Outer space and $\vec K, \vec K'\in X_\sgs$.
\begin{enumerate}
\item \label{i:conj char}
Marked Stallings graphs $(\Gamma\overset{\i}{\to} G,\tau)$ and $(\Gamma'\overset{\i'}{\to} G, \tau)$ represent the same element of $X_\sgs$ iff there is a label-preserving (i.e.\ $\eta$ is a lift of $id_G$) cellular isomorphism $\Gamma\overset{\eta}{\to}\Gamma'$ inducing $\eta_i:\Gamma_i\to \Gamma'_i$.
\item \label{i:Stallings equiv}
If $[\tau]=[\tau']$ then $\vec K_\tau$ and $\vec K_{\tau'}$ are equivalent. The equivalence class of $\vec K_\tau$ is denoted $\vec K_{[\tau]}$.
 
\item \label{i:Stallings phi}
For $\phi\in\Out(\f)$, $\vec K_{[\tau]\phi}=\big(\phi(\vec K)\big)_{[\tau]}$
\item \label{i:fix orbits}
There is $\phi\in \Out(\f)$ such that $[\tau']\cdot \phi=[\tau]$ and $\phi(\vec K)=\vec K'$ if and only $\vec K_{[\tau]}=\vec K'_{[\tau']}$.
\end{enumerate}
\end{proposition}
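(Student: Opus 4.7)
The plan is to establish the four items in order, reducing (4) to the first three.

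Item (1) is the uniqueness half of Stallings' folding theorem applied componentwise and with orderings: the Stallings graph $\vec K_\tau$ is canonically built from the action of representative subgroups on the universal cover of $G$, so it is determined up to label-preserving cellular isomorphism by the sequence $\vec K$, and conversely a label-preserving immersion $\Gamma\to G$ recovers the $\pi_1$-image of each component up to conjugacy. Item (2) is obtained by taking a cellular isomorphism $h: G \to G'$ with $h \circ g \simeq g'$ (guaranteed by $[\tau]=[\tau']$), lifting to universal covers, and observing that $h$ carries the minimal subtree for each chosen representative of $K_i$ to the minimal subtree of some conjugate representative on the $G'$ side; the induced quotient map is then a cellular isomorphism of Stallings graphs covering $h$ and preserving component order. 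Item (3) is a direct unpacking of the definitions: if $h: \Rn \to \Rn$ realizes $\phi$ then $[\tau]\cdot\phi = [(G, g\circ h)]$, and the subgroup of $\pi_1(G)$ representing $K_i$ via the marking $g \circ h$ is $g_*(\phi(K_i))$, so the Stallings graph of $\vec K$ via $g\circ h$ coincides tautologically with that of $\phi(\vec K)$ via $g$.

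The forward direction of (4) is then a one-line application of (2) and (3):
\[
\vec K_{[\tau]} \;=\; \vec K_{[\tau']\cdot\phi} \;=\; \bigl(\phi(\vec K)\bigr)_{[\tau']} \;=\; \vec K'_{[\tau']}.
\]
For the reverse direction, I would start from an equivalence of Stallings graphs $\eta: \vec K_\tau \to \vec K'_{\tau'}$ covering a cellular isomorphism $h: G \to G'$, fix a homotopy inverse $\bar{g}'$ of $g'$, and let $\phi \in \Out(\f)$ be the class of $k := \bar{g}' \circ h \circ g: \Rn \to \Rn$. By construction $g' \circ k \simeq h \circ g$, so $(G', g' \circ k)$ and $(G,g) = \tau$ are related by the cellular isomorphism $h^{-1}$; this gives $[\tau']\cdot\phi = [\tau]$. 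To check $\phi(\vec K) = \vec K'$, note that $\eta$ matches component $i$ of $\vec K_\tau$ with component $i$ of $\vec K'_{\tau'}$, which at the level of subgroups of $\pi_1$ means $h_*(g_*(K_i)) = g'_*(K'_i)$ up to conjugacy; composing with $(\bar{g}')_*$ gives $k_*(K_i) = K'_i$, i.e., $\phi(K_i) = K'_i$, with orderings preserved because $\eta$ is order-preserving on components.

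The main obstacle is the reverse direction of (4): one has to extract both the outer automorphism $\phi$ and its compatibility with the subgroup data simultaneously from a single cellular isomorphism between the underlying marked graphs. Everything else is unpacking of definitions, but this last step is where the interplay between markings, $\pi_1$-images of Stallings graphs, and the right action of $\Out(\f)$ on the spine gets used essentially, and item (1) is needed to convert an equality of Stallings graphs into an equality of conjugacy classes of subgroups.
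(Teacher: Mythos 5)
Your proof is correct and follows essentially the same route as the paper: item (1) via covering space theory / uniqueness of Stallings immersions, item (2) by lifting the cellular isomorphism $h$ through covering space theory, item (3) by unwinding the definitions, the forward direction of (4) from (3), and the reverse direction of (4) by assembling $\phi$ from the equivalence data $h$, $\eta$ and a homotopy inverse of $g'$. The only differences are notational (you orient $h$ from $G$ to $G'$ whereas the paper orients it the other way, and you name the homotopy inverse explicitly), not substantive.
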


\begin{proof}
\pref{i:conj char}
$\implies$: Add forests to $\Gamma_i$ and $\Gamma'_i$ to create covering spaces $\hat\Gamma_i$ and $\hat\Gamma'_i$ of $G$ and $G'$ respectively. Since $\Gamma_i$ and $\Gamma'_i$ have the same images in $\pi_1(G)$, by covering space theory there is a lift $\hat \eta_i:\hat\Gamma_i\to\hat\Gamma'_i$ of $id_G$. Set $\eta_i$ to be the induced map $\Gamma_i\to\Gamma'_i$.

$\Longleftarrow$: $\i=\i'\circ \eta$ and so $\i$ and $\i'$ have the same images in $\pi_1(G)$.

\pref{i:Stallings equiv}: If $[\tau]=[\tau']$, then by definition there is a cellular isomorphism $h:G\to G'$ so that $\tau'\simeq h\circ \tau$. Again by covering space theory there is the desired lift $\vec K_\tau\to \vec K_{\tau'}$.

\pref{i:Stallings phi}: If $f:\Rn\to\Rn$ induces $\phi$ and if $(\i,\tau)$ represents $\phi(\vec K)$ then $\big(\i,(G,g\circ f)\big)$ represents $\vec K$. 

\pref{i:fix orbits}:  $\implies$: Using \pref{i:Stallings phi}, $\vec K'_{[\tau']}=(\phi^{-1}\vec K')_{[\tau'\phi]}=\vec K_{[\tau]}$.

$\Longleftarrow$: Since $\vec K_{[\tau]}=\vec K'_{[\tau']}$, we have the right hand commuting square in the diagram below where $h$ and $\eta$ are cellular isomorphisms. Further, we may choose $f$ so that the left rectangle homotopy commutes.
\begin{equation*}
\begindc{\commdiag}[500]
\obj(0,1)[01]{$\Rn$}
\obj(2,0)[40]{$G$}
\obj(2,1)[41]{$G'$}
\obj(3,0)[60]{$\vec K_\tau$}
\obj(3,1)[61]{$\vec K'_{\tau'}$}
\obj(0,0)[00]{$\Rn$}
\mor{01}{41}{$g'$}[\atleft,\solidarrow]
\mor{61}{41}{$\i'$}[\atright,\solidarrow]
\mor{00}{01}{$f$}[\atleft,\solidarrow]
\mor{00}{40}{$g$}[\atleft,\solidarrow]
\mor{60}{40}{$\i$}[\atright,\solidarrow]
\mor{41}{40}{$h$}[\atright,\solidarrow]
\mor{61}{60}{$\eta$}[\atleft,\solidarrow]
\enddc
\end{equation*}
Define $\phi\in\Out(\f)$ to be induced by $f$. In particular, $[\tau']\phi=[\tau]$. The marked Stallings graph $(m,\tau)$ represents $\vec K$ and $(\i',g'\circ f)$ represents $\phi^{-1}\vec K'$. Since the images in $\pi_1(\Rn)$ of the maps induced by the homotopy equivalences $f^{-1}\circ g'^{-1}\circ \i'$ and $g^{-1}\circ \i$ are equal, $\phi(\vec K)=\vec K'$.\end{proof}

The {\it volume} of  a connected graph $\Gamma$ is the cardinality of its edge set. The {\it volume} of a Stallings graph $\Gamma\to G$ is the sequence of volumes of the components of $\Gamma$. Equivalent graphs have equal volumes. By Proposition~\ref{p:fix}\pref{i:Stallings equiv}, the volume $\|\tau\|_{\vec K}$ of $\vec K_\tau$  depends only on $[\tau]$. When $\vec K$ has only one component, say $\vec K=(K)$, we sometimes view $\vec K$ as an element of $\sgs$ and write for example $\|\tau\|_K:=\|\tau\|_{\vec K}$. Example~\ref{e:inequivalent} shows that the map $\vec K_{[\tau]}\mapsto \big((K_1)_{[\tau]}, \dots, (K_N)_{[\tau]}\big)$ from equivalence classes of Stallings graphs representing elements of $X_\sgs$ to sequences of equivalence classes representing elements of $\sgs$ is not injective. However, $\|\tau\|_{\vec K}=(\|\tau\|_{K_1}, \dots, \|\tau\|_{K_N})$.
By Proposition~\ref{p:fix}\pref{i:Stallings phi}, we have:

\begin{remark}\label{r:fix}
$\|\mg\cdot \phi\|_{\vec K} = \|\mg\|_{\phi^{}(\vec K)}$ for all marked graphs $\tau$, all $\vec K \in X_\sgs$ and all $\phi \in \Out(F_n)$. 
\end{remark}

We now define the modified star graph. Fix a rose $\rho = (R,r)$ and a $K \in \sgs$. Divide $\sg_\rho$   into {\it pieces} $\Y = \{Y\}$ by snipping each edge at its midpoint.  Denote the set of valence one vertices of $Y$ by $\partial Y$.   Each piece $Y$ is a tree that is naturally the union of distinct half-edges meeting at a common vertex  of $\sg_\rho$.   We use those half-edges to label the elements of $\partial Y$ by distinct elements of $H$. 
To construct the modified star graph of $\rho$ and $K$, start with $2n$ vertices $V$ labeled by the elements of $H$ and then add   each $Y \in \Y$, with elements of $\partial Y$   attached to $H$ according to their labels. 
It is clear that if $a\in\f$, $K$ is the conjugacy class of the cyclic subgroup $\langle a \rangle$, and $w$ is the conjugacy class of $a$ then the modified star graph for $\rho$ and $K$ is the same as the star graph for $\rho$ and $w$. 

\medskip

We conclude this section by showing that some standard facts about $\|\mg\|_w$ and star graphs also hold for $\|\mg\|_K$ and modified star graphs.
   
Suppose that $\rho = (R,r) $  is a rose and that $ \mg = (G,g)\in St(\rho)$, i.e.\ there is a marking-preserving map $G\to R$ that collapses a maximal tree $T$.  There is a resulting collapse map $K_{\mg} \to K_\rho$. The edges of $G$ that are  collapsed by $G\to R$, i.e.\ the edges of $T$, are {\it new} edges and the others are {\it old}. Similarly, $K_\rho$ consists of old edges and $K_{\mg} \to K_\rho$ collapses new edges. We have a commuting diagram  
\begin{equation*}
\begindc{\commdiag}[500]
\obj(1,0)[20]{$R$}
\obj(0,1)[01]{$K_{\mg}$}
\obj(1,1)[21]{$K_\rho$}
\obj(0,0)[00]{$G$}
\mor{01}{21}{}[\atleft,\solidarrow]
\mor{00}{20}{}[\atleft,\solidarrow]
\mor{01}{00}{}[\atright,\solidarrow]
\mor{21}{20}{}[\atright,\solidarrow]
\enddc
\end{equation*}
If $\{e_i\}$ is the set of new edges in $G$  and $|e_i|_{\mg,K}$ is the number of edges in $K_\mg$ labeled by either $e_i$ or $\bar e_i$ then $$\|\mg\|_K = \|\rho\|_K + \sum_i |e_i|_{\mg,K}$$

 Following \cite[Definition 8.1]{kv:contractible}, we define  a {\it non-trivial ideal edge} $\alpha$ to be  a partition of $H$ into two subsets of cardinality $\ge 2$ that separate some pair $\{e,\bar e\} \subset H$. Each such $\alpha$ determines a marked graph $\mg \in St(\rho)$ and a collapse map $\mg \to \rho$ that collapses a single edge, say $A$.  There is an induced  collapse map $K_{\mg} \to K_\rho$.      If we snip the old edges of $K_{\mg}$ at their midpoints, the resulting components are indexed by the pieces $Y \in \Y$ of $K_\rho$. If $\ti Y$ is the component corresponding to $Y$, then the  map $\ti Y \to Y$  either is a cellular isomorphism or collapses a single edge labeled $A$.  The latter occurs if and only if $\alpha$ separates $\partial Y$ (thought of as a subset of $H$). 
  
   Motivated by this last  observation, we define $|A|_Z$, for arbitrary subsets $A,Z$ of $H$, to 
be  $1$ if $\{A,A^c\}$ separates $Z$ and $0$ otherwise.  See Figure~\ref{f:ideal}.

\begin{figure}[h!] 
\centering
\includegraphics[width=.4\textwidth]{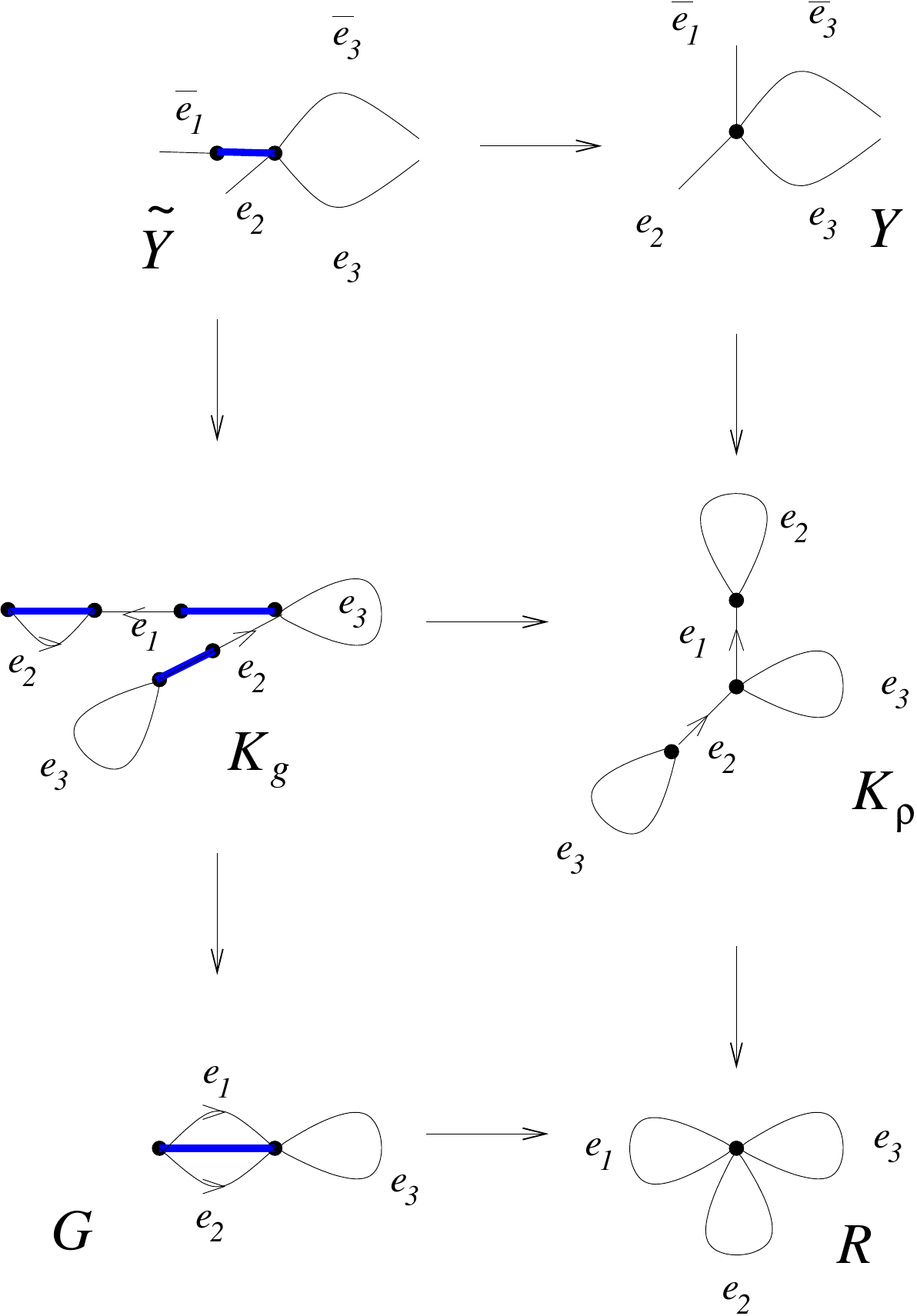}
\caption{
The right side illustrates a rose $\rho=(R,r)$, a Stallings graph $K_\rho$ for an element $K\in\sgs$, and a piece $Y$ with $\partial Y=\{\bar e_1, e_2, e_3, \bar e_3\}$. $(G,g)$ illustrates a point in $St(\rho)$ with a new (blue) edge corresponding to the   ideal edge $E=\{\{\bar e_1,\bar e_2\},\{\bar e_1,\bar e_2\}^c\}$. The partition of $\partial Y$ induced by $E$ is $\{\{\bar e_1\},\{e_2, e_3, \bar e_3\}\}$, $E$ separates $\partial Y$, and $|E|_{\partial Y}=1$. Cf.\ Figure 3 of \cite{kv:contractible}.}
\label{f:ideal}
\end{figure}

\begin{remark}\label{r:construct stallings}
Let $K$ be the element of $\sgs$ represented by $\langle a_1,\dots, a_N\rangle <\f$. The marked Stallings graph $(\Gamma\to \Rn, id_{\Rn})$ representing $K$ can be constructed by taking the core of the graph obtained by starting with the join of loops representing the $a_i$'s and folding; see \cite{js:folding} (or for example \cite{fh:coherence}). A similar statement holds for elements of $X_\sgs$.
\end{remark}

\section{Proof of Theorem~\ref{T1}} \label{s:non-equivariant}

We assume throughout this section that $\group$ is trivial.   After recalling certain key results from  \cite{cv:moduli} and \cite{kv:contractible} (giving references in \cite{kv:contractible}),  we  make the necessary modifications so that they apply in our current context with $\sgs$ replacing $\cC$.  Finally, we verify that our contractible subcomplexes form a \pf\ and apply Proposition~\ref{p:gfp}.

\subsection{Contractibility in Case $\cC$ }
Recall that $L$ denotes the subcomplex of the spine of Outer space spanned by graphs with no separating edges.
By \cite[Proposition~6.1]{kv:contractible}, the spine of Outer space deformation retracts to $L$.  

Fix a sequence $(w_1, \cdots)$ in $\cC$ that includes all elements of $\cC$ (repeats allowed). In \cite{kv:contractible}, Vogtmann takes the sequence to be without repeats and in non-decreasing reduced word length with respect to a fixed basis, but this is not used in her proof. This flexibility, similar to that in \cite{cv:moduli}, will be important for us.  For $\rho\in\cR$, denote by $\|\rho\|$ the element $(\|\rho\|_{w_1}, \cdots)$ of the ordered abelian group $\Z^\cC$. Order the set $\cR$ of marked roses by $\rho<\rho'$ if $\|\rho\|<\|\rho'\|$.

We record three results from \cite{kv:contractible} and then prove two corollaries of those results.

\begin{proposition} [{\cite[Proposition 5.2]{kv:contractible}}] \label{p:well order V}
The set $\cR$ of roses is well-ordered by $<$.
\end{proposition}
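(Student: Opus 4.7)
The plan is to show that every nonempty subset $S \subseteq \cR$ has a minimum under $<$, where $<$ is lexicographic order on the coordinate sequences $(\|\rho\|_{w_1}, \|\rho\|_{w_2},\ldots) \in \Z^{\cC}$.

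First I would build a candidate minimum by iterated minimization. Set $S_0 := S$, and for each $k \ge 1$ let
$$n_k := \min\{\|\rho\|_{w_k} : \rho \in S_{k-1}\}, \qquad S_k := \{\rho \in S_{k-1} : \|\rho\|_{w_k} = n_k\}.$$
Each $n_k$ exists because $\|\rho\|_{w_k}$ is a positive integer, and each $S_k$ is nonempty by construction. Any element of $\bigcap_k S_k$ is at most every element of $S$ in lex order; conversely, any $\rho \in S$ outside $\bigcap_k S_k$ first disagrees with $(n_k)$ at some index $k^*$, where necessarily $\|\rho\|_{w_{k^*}} > n_{k^*}$, so $\rho$ is strictly greater than every element of $S_{k^*}$. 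It therefore suffices to exhibit an element of $\bigcap_k S_k$.

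Next I would reduce this existence to the following finiteness claim: for every $\rho_0 \in \cR$ there is a finite index $k_0$ such that
$$T := \{\rho \in \cR : \|\rho\|_{w_i} \le \|\rho_0\|_{w_i} \text{ for all } i \le k_0\}$$
is finite. Given this, pick $\rho_0 \in S$ and the associated $k_0$; then $S_{k_0} \subseteq T$ is nonempty and finite, so the descending chain $S_{k_0} \supseteq S_{k_0+1} \supseteq \cdots$ of nonempty subsets of a finite set must stabilize, and its stable value lies in $\bigcap_k S_k$. It will in fact be a singleton, because $(w_i)$ enumerates all of $\cC$ and length sequences separate marked roses (by the usual marked-length-spectrum argument on the universal cover, cf.\ Proposition~\ref{p:fix}).

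The main obstacle is the finiteness claim. The natural route is via proper discontinuity of the $\Out(\f)$-action on the spine: since $\Out(\f)$ acts transitively on $\cR$, one reparametrizes roses as $\rho_0 \cdot \phi$ for $\phi \in \Out(\f)$ (modulo the finite stabilizer of $\rho_0$), and by Remark~\ref{r:fix} the bound $\|\rho_0 \cdot \phi\|_{w_i} \le \|\rho_0\|_{w_i}$ becomes a bound on $\|\rho_0\|_{\phi^{-1}(w_i)}$, i.e.\ on the word length of $\phi^{-1}(w_i)$ in the basis determined by $\rho_0$. Choosing $k_0$ large enough that $\{w_1,\ldots,w_{k_0}\}$ contains a Whitehead-rigid collection --- for instance the cyclic conjugacy classes of the elements of a basis together with their pairwise products --- forces any such $\phi$ to lie in a finite set; this is the content of Whitehead's finiteness theorem and is the technical heart of the argument.
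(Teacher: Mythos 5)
Your overall strategy---iterated coordinatewise minimization to build a descending chain $S_0\supseteq S_1\supseteq\cdots$, reduced to a finiteness statement coming from proper discontinuity and a ``filling'' collection of test classes---is the right one, and it is essentially the argument in \cite[Proposition~5.2]{kv:contractible}. The reduction step, however, contains a real error.

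The claim ``$S_{k_0}\subseteq T$'' is false in general. The iterated minima $n_k$ are taken over $S_{k-1}$, and once $\rho_0$ drops out of the chain (which happens at the first index $j$ with $\|\rho_0\|_{w_j}>n_j$), nothing forces $n_i\le\|\rho_0\|_{w_i}$ for $i>j$. Concretely, if $S=\{\rho_0,\rho_1\}$ with $\|\rho_0\|_{w_1}=2$, $\|\rho_1\|_{w_1}=1$, $\|\rho_0\|_{w_2}=1$, $\|\rho_1\|_{w_2}=100$, then $S_1=\{\rho_1\}$, $n_2=100$, and $\rho_1\in S_2$ violates the defining inequality of $T$ at index $2$. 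So $T$ being finite, as you stated it, does not make $S_{k_0}$ finite.

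The fix is to strengthen the finiteness claim so that the bound does not refer to $\rho_0$: for a suitable $k_0$ (depending only on the enumeration --- namely large enough that $\{w_1,\dots,w_{k_0}\}$ contains a filling collection with respect to \emph{some} fixed basis), the map $\rho\mapsto(\|\rho\|_{w_1},\dots,\|\rho\|_{w_{k_0}})$ has finite sub-level sets, hence finite fibers. Applied to the tuple $(n_1,\dots,n_{k_0})$, this makes $S_{k_0}$ finite directly, and the rest of your argument goes through. Note that once you make this repair, the auxiliary element $\rho_0$ and the set $T$ are not needed at all. Separately, the appeal to ``Whitehead's finiteness theorem'' is doing the real work and should be made precise (it is the properness of translation-length functions on a filling set of conjugacy classes), but that part of the plan is sound.
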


 Define  $L_{< \rho} = \cup_{\rho' < \rho }St(\rho')$
  
\begin{lemma}  [{\cite[Corollary 7.2]{kv:contractible}}]  \label{l:factorization}If $\st(\rho) \cap L_{< \rho} $ is not empty then it contains an element of $\cR$ that is obtained from $\rho$ by a Whitehead move.  
\end{lemma}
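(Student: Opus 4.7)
The plan is to produce a rose $\rho''$ that is a Whitehead neighbor of $\rho$ with $\rho'' < \rho$. Then $\rho''$ lies in $L_{<\rho}$ via its own star, and the intermediate $2$-vertex graph $G$ of the Whitehead move $\rho \leftarrow G \to \rho''$ lies in $St(\rho) \cap St(\rho'') \subseteq St(\rho) \cap L_{<\rho}$, witnessing the conclusion.

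First I would select any $\tau \in St(\rho) \cap L_{<\rho}$. Because roses are minimal in the forest-collapse order, $\tau \in St(\rho)$ forces $\tau \geq \rho$, and similarly $\tau \geq \rho'$ for some rose $\rho' < \rho$. Since $\rho \neq \rho'$, neither can equal $\tau$, so $\tau$ strictly dominates both: there are distinct spanning subforests $F, F'$ of $\tau$ with $\tau/F$ equivalent to $\rho$ and $\tau/F'$ equivalent to $\rho'$. Viewing $\tau = \rho \cup F$ as a blow-up of $\rho$ at its unique vertex, each edge $e$ of $F$ determines a non-trivial ideal edge of $\rho$, and collapsing the remaining edges of $F$ produces a $2$-vertex intermediate graph $G_e \in St(\rho)$ carrying a Whitehead move $\rho \to \rho_e$. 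Thus every edge of $F$ supplies a candidate Whitehead move.

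Next I would use the spanning-tree exchange property for $F, F'$ inside $\tau$: a sequence of single-edge swaps connects $F$ to $F'$, yielding a chain $\rho = \rho_0, \rho_1, \ldots, \rho_m = \rho'$ in which each successive pair differs by a Whitehead move whose intermediate graph sits below $\tau$. All $\rho_i$ lie in $St(\tau)$, so this localises the comparison to a chain of Whitehead-adjacent roses. The analysis is carried out through the length formula
\[
\|\tau\|_w = \|\rho\|_w + \sum_{e \in F} |e|_{\tau, w}
\]
from Section~\ref{s:Stallings}, applied coordinate-by-coordinate in the lex order on $\Z^\cC$ determined by the enumeration $(w_1, w_2, \ldots)$.

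The hard part is the peak-reduction step itself: it is not automatic that some $e \in F$ satisfies $\rho_e < \rho$, since the total decrease from $\rho$ to $\rho'$ could in principle occur only through a chain that first increases before decreasing. To handle this I would argue, in the spirit of Whitehead's cut-vertex lemma, that for any $w \in \cC$ the quantity $\|\rho_e\|_w - \|\rho\|_w$ decomposes as a sum over the half-edges of $\rho$ of contributions indexed by the partition-cut data of $e$ in the star graph for $\rho$ and $w$. Taking $w_j$ to be the first coordinate in which $\rho$ and $\rho'$ differ, the strict inequality $\|\rho'\|_{w_j} < \|\rho\|_{w_j}$ combined with the additive structure of this formula over $e \in F$ forces at least one summand to be strictly negative for a single $e$. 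Iterating lex-recursively on the coordinates $w_1, w_2, \ldots$ (restricting at each stage to the ideal edges that are non-increasing on the preceding coordinates), I would isolate an edge $e \in F$ whose Whitehead move satisfies $\rho_e < \rho$. Taking $\rho'' := \rho_e$ and $G := G_e$ finishes the argument.
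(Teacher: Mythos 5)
The paper does not actually give a proof of this lemma: it is cited verbatim from Vogtmann's \cite{kv:contractible} (Corollary~7.2), and Proposition~\ref{p:main gersten} simply observes that the proof given there carries over to the $\sgs$ setting because the norm decomposition $\|\tau\|_K = \|\rho\|_K + \sum_i|e_i|_{\tau,K}$ and the ideal-edge machinery work identically. So you are re-deriving a result the authors treat as black-box input from the literature.

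Your setup is reasonable: extracting $\tau$, the two spanning trees $F$ and $F'$, and the chain of spanning-tree exchanges is exactly the right starting point. However, the conclusion you need is that \emph{some single ideal edge coming from $F$} already yields a Whitehead move $\rho\to\rho''$ with $\rho''<\rho$, and this is precisely the step you acknowledge as hard and then handle by assertion rather than proof. The phrase ``the additive structure of this formula over $e\in F$ forces at least one summand to be strictly negative'' does not hold up: the summands $|e|_{\tau,w}$ in the displayed decomposition are \emph{nonnegative} counts of Stallings-graph edges and cannot be negative, while the quantity you actually need to control is $\|\rho_e\|_w - \|\rho\|_w$, which depends on a further choice of which old edge of the two-vertex graph $G_e$ to collapse (a choice your notation $\rho_e$ leaves unspecified). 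The inequality $\|\rho'\|_{w_j}<\|\rho\|_{w_j}$, rewritten as $\sum_{F'}|e'|_{\tau,w_j}>\sum_{F}|e|_{\tau,w_j}$, does not by itself produce a norm-decreasing exchange: a chain of exchanges connecting $F$ to $F'$ can, a priori, ascend before descending. That is the peak phenomenon, and nothing in your argument rules it out. Invoking Whitehead's cut-vertex lemma here is also problematic on two counts: it would be circular in spirit, since the entire Culler--Vogtmann program (and this paper) is designed to \emph{replace} the classical cut-vertex/peak-reduction combinatorics with the norm argument and the contractibility of the sublevel sets; and the cut-vertex lemma speaks about a Whitehead automorphism decreasing cyclic length, not about the blow-up decomposition over a common $\tau$, so it cannot be applied coordinate-by-coordinate in the form you describe without substantial extra work. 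In short: there is a genuine gap at the step where a decreasing Whitehead move is extracted, and the tools you cite do not close it. If you want to prove this from scratch you would need the finer factorization analysis of \cite[Section 7]{kv:contractible} (or equivalently \cite[Section 5]{kv:equivariant}), not a bare spanning-tree exchange.
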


\begin{proposition} [{\cite[Section 10]{kv:contractible}}]  \label{kv:prop}$\st(\rho) \cap L_{< \rho}$ is either empty or contractible.   
\end{proposition}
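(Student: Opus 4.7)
The plan is to identify $\st(\rho)\cap L_{<\rho}$ combinatorially as a subcomplex of $\st(\rho)$ determined by ideal edges, and then produce a contraction to a cone-like piece. Every vertex $\mg \in \st(\rho)$ corresponds to a set $S_\mg$ of pairwise compatible non-trivial ideal edges at $\rho$ (one for each edge of $G$ collapsed by $G \to R$), and simplices of $\st(\rho)$ correspond to nested chains of such sets. By Lemma~\ref{l:factorization}, $\mg \in L_{<\rho}$ if and only if there is a Whitehead move $\rho \to \rho'$ with $\rho' < \rho$ such that $\mg$ collapses to $\rho'$; equivalently, $S_\mg$ contains at least one \emph{reducing ideal edge} $\alpha$, meaning that the rose $\rho_\alpha$ obtained from $\rho$ by blowing up $\alpha$ and collapsing its complement satisfies $\|\rho_\alpha\|<\|\rho\|$. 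Writing $\A^*$ for the set of reducing ideal edges at $\rho$, we see that $\st(\rho)\cap L_{<\rho}$ is the subcomplex of $\st(\rho)$ spanned by those simplices whose vertex set meets $\A^*$.

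Next I would establish a \emph{combination lemma} for reducing ideal edges: if $\alpha,\beta\in\A^*$ are not compatible, then some natural combination of them (one of $\alpha\cup\beta$, $\alpha\cap\beta$, $\alpha\cup\beta^c$, etc., restricted so that it is still a non-trivial ideal edge) lies in $\A^*$ and is compatible with at least one of $\alpha, \beta$. The reason this should hold is that the change in the length $\|\rho\|_{w_i}$ under a Whitehead move along an ideal edge $\alpha$ is computed by a linear formula in the quantities $|\alpha|_{\partial Y}$ as $Y$ ranges over the pieces of the star graph of $\rho$ and $w_i$, as recorded in the discussion preceding Remark~\ref{r:construct stallings}; and the quantities $|\alpha|_{\partial Y}$ behave subadditively under Boolean operations on $\alpha$. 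A careful coordinate-by-coordinate accounting (coordinate = element of the sequence $(w_1, w_2, \ldots)$, with the lexicographic ordering of $\Z^{\cC}$) then forces one of the Boolean combinations to also reduce $\|\rho\|$.

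With the combination lemma in hand, contractibility follows by a standard poset/nerve argument. Concretely, I would order $\A^*$ by a suitable partial order (for instance, refinement of partitions, together with the lexicographic $<$ on the associated roses), choose a maximal compatible family $\A^*_0 \subseteq \A^*$ such that every $\alpha\in\A^*$ is compatible with some element of $\A^*_0$, and then deformation retract $\st(\rho)\cap L_{<\rho}$ onto the simplex spanned by $\A^*_0$ via a sequence of elementary collapses guided by the combination lemma. This gives either a contractible complex, or nothing at all if $\A^*=\emptyset$, which is exactly the dichotomy in the statement. An equivalent reformulation uses Quillen's Theorem~A applied to the inclusion of the poset of compatible subsets of $\A^*$ into the poset of compatible subsets of all ideal edges meeting $\A^*$.

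The main obstacle is the combination lemma: one must verify that when $\alpha,\beta\in\A^*$ are incompatible, the Boolean operation that produces a reducer can be taken compatibly with the lexicographic order on $\Z^\cC$ used to define $<$ on $\cR$. This requires tracking coordinate-by-coordinate cancellations in the linear formula for $\|\rho_\alpha\|_w-\|\rho\|_w$ in terms of star-graph data, and handling the degenerate cases where the combination of $\alpha$ and $\beta$ fails to be a non-trivial ideal edge (partitioning $H$ into parts of size at least $2$ and separating some $\{e,\bar e\}$). Once this bookkeeping is done, the contraction itself is formal.
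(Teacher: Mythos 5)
The paper does not reprove this proposition; it is cited directly from \cite[Section~10]{kv:contractible}, and the paper's only engagement with it is the observation (in the proof of Proposition~\ref{p:main gersten}) that the arguments of \cite[Sections 6--10]{kv:contractible} carry over once the length function and star graph are generalized. Your sketch is thus a reconstruction of Vogtmann's argument, and in broad outline it has the right shape: identify $\st(\rho)\cap L_{<\rho}$ with the subcomplex spanned by ideal forests containing a reducing ideal edge, prove a Whitehead-type combination lemma for reducing ideal edges via subadditivity of $|\cdot|_{\partial Y}$ under Boolean operations, and then run a poset contraction. The subadditivity you point to is indeed the combinatorial engine; it is precisely what Lemma~\ref{l:11.2} supplies in the $\sgs$ setting.

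Two things are off. First, the ``iff'' you attribute to Lemma~\ref{l:factorization} is not what that lemma says: Lemma~\ref{l:factorization} only asserts that \emph{if} $\st(\rho)\cap L_{<\rho}$ is nonempty \emph{then it contains some} Whitehead-adjacent rose $\rho'<\rho$; it does not say that every $\tau\in\st(\rho)\cap L_{<\rho}$ has a reducing ideal edge in its ideal forest $S_\tau$. That stronger statement needs the full factorization result of \cite[Section~7]{kv:contractible} (of which Lemma~\ref{l:factorization} is only a corollary), which factors a path from $\rho$ to $\rho'$ through Whitehead moves whose stars all contain $\tau$ and along which the norm is non-increasing. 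Second, the contraction step as you describe it --- choosing a maximal compatible family $\A^*_0$ with which every reducing ideal edge is compatible and collapsing $\st(\rho)\cap L_{<\rho}$ onto its span --- is not evidently realizable: such a family need not exist, and the combination lemma as you have stated it (producing \emph{one} compatibility) is weaker than what such a collapse requires. Vogtmann's actual argument in \cite[Sections~10--11]{kv:contractible} is an inductive pushing argument keyed to an ordering on ideal edges and a more precisely quantified combination statement (her Lemma~10.1/11.3, whose proof uses Lemmas~11.1--11.2). You rightly flag the combination lemma as the main obstacle, but neither it nor the contraction is actually established as written.
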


The main result of \cite{kv:contractible}, namely contractibility of $L$ and hence the spine of Outer space,  follows easily from Proposition~\ref{kv:prop} and the fact that the spine is connected.  We apply that argument here to conclude that the same is true for each $L_{<\rho}$.  

\begin{corollary}\label{c:L is contractible}  $L_{<\rho}$ is contractible for each non-minimal $\rho \in \cR$.
\end{corollary}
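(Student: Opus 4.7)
The plan is to run exactly the transfinite induction by which Vogtmann proves $L$ is contractible in \cite{kv:contractible}, but truncated at $\rho$. Using the well-order on $\cR$ from Proposition~\ref{p:well order V}, I would prove by induction on $\rho \in \cR$ the joint statement: (a) $L_{\le \rho} := L_{<\rho} \cup St(\rho)$ is contractible, and (b) $L_{<\rho}$ is contractible whenever $\rho$ is non-minimal. The corollary is part (b).

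The base case $\rho = \rho_{\min}$ is immediate: $L_{\le \rho_{\min}} = St(\rho_{\min})$ is a simplicial star, hence contractible. For the inductive step at a non-minimal $\rho$, I would first obtain (b). If $\rho$ has an immediate predecessor $\rho^-$, then $L_{<\rho} = L_{\le \rho^-}$ is contractible by the inductive hypothesis. If $\rho$ is a limit in the well-order, then $L_{<\rho} = \bigcup_{\rho' < \rho} L_{\le \rho'}$ is a directed union of contractible CW subcomplexes, and contractibility follows from the standard compactness argument: any continuous map of a sphere into $L_{<\rho}$ factors through some $L_{\le \rho'}$, where it is null-homotopic. Next I would deduce (a) from (b) by gluing: $L_{\le \rho} = L_{<\rho} \cup St(\rho)$ with $St(\rho)$ contractible (a cone on its boundary), $L_{<\rho}$ contractible by (b), and $St(\rho) \cap L_{<\rho}$ contractible by Proposition~\ref{kv:prop}; a pushout of contractible CW subcomplexes along a contractible subcomplex is contractible (Mayer--Vietoris plus Seifert--van Kampen plus Whitehead's theorem).

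The main obstacle is the \emph{empty} alternative in Proposition~\ref{kv:prop}: if $St(\rho) \cap L_{<\rho}$ were empty for some non-minimal $\rho$, the gluing above would produce a disjoint contractible summand and destroy contractibility. Ruling this out is precisely the descent step underlying Vogtmann's argument: for non-minimal $\rho$ one invokes classical peak reduction (Whitehead's algorithm, applied now with respect to the whole sequence $(w_1, w_2, \ldots)$, which exhausts $\cC$) to produce a Whitehead move from $\rho$ to some rose $\rho' < \rho$, giving $\emptyset \neq St(\rho) \cap St(\rho') \subseteq St(\rho) \cap L_{<\rho}$; Lemma~\ref{l:factorization} is the compatible downward structural statement that identifies such a neighbor inside $\cR$. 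With nonemptiness in hand, the gluing step is contractible and the induction closes, completing the proof of (b) and hence of the corollary.
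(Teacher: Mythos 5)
Your proposal and the paper's proof share the transfinite-induction skeleton, but they diverge at exactly the point you flag as ``the main obstacle,'' and the divergence matters. You try to prove outright that $L_{<\rho}$ is contractible at each stage, which forces you to know $St(\rho)\cap L_{<\rho}\ne\emptyset$ whenever $\rho$ is non-minimal. To get that you appeal to classical peak reduction / Whitehead's algorithm. Inside this paper that move is circular: the nonemptiness statement you need is precisely Corollary~\ref{l:whitehead auto}, and the paper \emph{derives} Corollary~\ref{l:whitehead auto} from the present Corollary~\ref{c:L is contractible} (together with Lemma~\ref{l:factorization}), not the other way around. Treating it as an external input also does not match the paper's plan: Remark~\ref{rmk:cor still hold} insists that the proof of this corollary use only Propositions~\ref{kv:prop}, \ref{p:well order V}, Lemma~\ref{l:factorization} and the connectedness of $L$, so that the same argument transports verbatim to the $\sgs$-ordering where classical Whitehead has no counterpart; and even in the $\cC$ case the reduction you need is with respect to the infinite lexicographic norm $\|\cdot\|$, not the length of a single conjugacy class, so ``classical peak reduction'' is not quite the statement required.

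The paper sidesteps this by splitting the claim. In the first transfinite induction it proves only $C(\rho)$: \emph{every component} of $L_{<\rho}$ is contractible. For a successor step this follows from $C(\rho^-)$ and Proposition~\ref{kv:prop} with \emph{no} nonemptiness hypothesis: if $St(\rho^-)\cap L_{<\rho^-}$ is empty, then $St(\rho^-)$ is simply a new contractible component, and $C$ persists. Limit steps are the directed-union argument you describe, applied componentwise. Only in a second, separate step does the paper address connectedness: if some $L_{<\rho'}$ had two components $A$ and $B$, the same induction shows $A$ and $B$ would remain in distinct components of $L_{<\rho}$ for all larger $\rho$, and hence in distinct components of $L$, contradicting the known connectedness of $L$ (\cite[Propositions 4.1 and 6.1]{kv:contractible}). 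Connectedness of $L$ is thus the external input, not Whitehead's algorithm, and the argument is order-agnostic. Your gluing lemma (pushout of contractible CW complexes along a contractible subcomplex) is fine as a topological tool, and the rest of your induction is sound; the gap is that the nonemptiness you need has to be a \emph{consequence} of this corollary, not an ingredient.
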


\proof  The proof is by (transfinite) induction in two steps.   Let $C(\rho)$ be the statement that  each component of $L_{<\rho}$ is contractible.  This  obviously holds for   the minimal element of $\cR$ so we may assume that $\rho$ is not minimal and that $C(\rho')$ holds for each $\rho' < \rho$.    If there is a largest element $\rho' \in L_{<\rho}$  then $C(\rho)$ follows from Proposition~\ref{kv:prop} and the inductive hypothesis.  Otherwise, there is an increasing  sequence $\rho_1< \rho_2 <\ldots $ such that $L_{<\rho}$ is the increasing union $\cup_{i=1}^\infty L_{\rho_i}$.  In this case, $C(\rho)$ follows from the inductive hypothesis.  

If the corollary fails, then there exists  $\rho' $ such that $L_{\rho'}$ has at least two components, $A$ and $B$.   An induction argument exactly as above shows that  $A$ and $B$ are contained in separate components of $L_{\rho}$ for all $\rho' < \rho$ and hence contained in separate components of $L$, in contradiction to the fact that $L$ is connected \cite[Propositions~4.1 and 6.1]{kv:contractible}.  
\endproof

 \begin{corollary}  [cf.\ \cite{jhcw:equivalent}] \label{l:whitehead auto}
If $\rho\in\cR$ is not minimal then there  exists $\rho' \in \cR$ such that $\rho' < \rho$  and such that $\rho'$ is obtained from $\rho$ by a Whitehead move. 
\end{corollary}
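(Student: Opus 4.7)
The plan is to reduce the corollary to Lemma~\ref{l:factorization}, which already produces the desired Whitehead-move neighbor inside $St(\rho)\cap L_{<\rho}$ whenever that intersection is non-empty. So the entire task is to show, for $\rho$ non-minimal in $\cR$, that $St(\rho)\cap L_{<\rho}\neq\emptyset$.

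I will argue this by contradiction. Suppose $St(\rho)\cap L_{<\rho}=\emptyset$. Since $\cR$ is well-ordered (Proposition~\ref{p:well order V}), $\rho$ either has an immediate successor $\rho^+$ in $\cR$ or is the maximum of $\cR$. In the successor case, the definition of successor gives the set-theoretic identity
\[
L_{<\rho^+}=\bigcup_{\rho'\leq\rho}St(\rho')=L_{<\rho}\cup St(\rho),
\]
and under the empty-intersection assumption this expresses $L_{<\rho^+}$ as a disjoint union of two non-empty subcomplexes; here $L_{<\rho}$ is non-empty because $\rho$ is not minimal, so there exists $\rho''<\rho$ with $\rho''\in St(\rho'')\subset L_{<\rho}$. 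This contradicts Corollary~\ref{c:L is contractible} applied to the non-minimal rose $\rho^+$, which asserts that $L_{<\rho^+}$ is contractible, and in particular connected. In the maximum case, the analogous decomposition $L=L_{<\rho}\cup St(\rho)$ contradicts instead the connectedness of $L$ cited in \cite[Propositions~4.1 and 6.1]{kv:contractible}.

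Thus $St(\rho)\cap L_{<\rho}\neq\emptyset$, and Lemma~\ref{l:factorization} produces $\rho'\in\cR$ in this intersection that is obtained from $\rho$ by a Whitehead move; the condition $\rho'\in L_{<\rho}$ is precisely $\rho'<\rho$, as required. The only conceptual subtlety I foresee is cleanly handling the successor-versus-maximum dichotomy in the well-ordered set $\cR$, but both branches are dispatched by results already in hand, so no new technical work is needed.
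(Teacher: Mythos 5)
Your proof is correct and follows essentially the same argument as the paper: take $\hat\rho$ (your $\rho^+$) to be the successor, observe $L_{<\hat\rho}=L_{<\rho}\cup St(\rho)$, invoke Corollary~\ref{c:L is contractible} for connectedness of $L_{<\hat\rho}$ to force $St(\rho)\cap L_{<\rho}\neq\emptyset$, then apply Lemma~\ref{l:factorization}. The only addition is your explicit treatment of the case where $\rho$ is a maximum of $\cR$, which the paper passes over silently (indeed $\cR$ has no maximum since the norms are unbounded), so this branch is harmless but not strictly necessary.
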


\proof  Let $\hat \rho$ be the  smallest element of $\cR$ that is greater than $\rho$ in the well-ordering.  Thus  $L_{< \hat \rho} = L_{<\rho} \cup St(\rho)$.   Corollary~\ref{c:L is contractible}.implies that $L_{< \hat \rho}$ is connected and hence that  $L_{<\rho} \cap St(\rho)$ is not empty.  Lemma~\ref{l:factorization} therefore completes the proof.
\endproof

\subsection{Contractibility in Case $\sgs$} \label{s:case sgs} We now extend these results so as to apply to $\sgs$. The only difference between the $\cC$ case  and $\sgs$ case is that some pieces in the latter have vertices of valence bigger than two.  Many of the arguments in the $\cC$ case therefore carry over to the $\sgs$ setting without change.

   Fix a sequence $(\sg_1,\sg_2,\cdots)$ in $\sgs$ that includes every element of $\sgs$.    Update $\|\rho\|$ to now denote the element $(\|\rho\|_{\sg_1},\|\rho\|_{\sg_2},\cdots)$ of $\Z^\bN$. We also update $<$ and write $\rho<\rho'$ if $\|\rho\|<\|\rho'\|$.     

We have the following analog of Proposition~\ref{p:well order V}. 
   
\begin{proposition}
\label{p:gersten order}
$\cR$ with the updated $<$ is well-ordered.
\end{proposition}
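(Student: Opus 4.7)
The plan is to adapt Vogtmann's proof of Proposition~\ref{p:well order V} (namely \cite[Proposition~5.2]{kv:contractible}) essentially without change. That argument uses only two formal properties of the sequence $(w_i)\subset\cC$ and the resulting tuple $\|\rho\|=(\|\rho\|_{w_1},\|\rho\|_{w_2},\dots)\in\Z^\bN$: (i) each coordinate lies in $\bN$, and (ii) the assignment $\rho\mapsto\|\rho\|$ is injective on $\cR$, so that $<$ is a total order. Both of these properties survive the replacement of $w_i\in\cC$ by $\sg_i\in\sgs$.

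First I would check (i): by definition $\|\rho\|_{\sg_i}$ counts the edges of the Stallings graph $(\sg_i)_\rho$, so it is a non-negative integer. Next I would check (ii): since $(\sg_i)$ exhausts $\sgs$, it in particular includes every cyclic conjugacy class $\langle a\rangle$ with $a\in\f$, and for such $K=\langle a\rangle$ the Stallings graph $K_\rho$ is simply the immersed circuit representing the conjugacy class of $a$; thus $\|\rho\|_{\langle a\rangle}=\|\rho\|_{[a]}$. Hence the updated tuple $\|\rho\|$ records the entire classical length spectrum of $\rho$ on $\cC$, which by length-function rigidity (Culler--Morgan) determines $\rho$ as a point of the spine of Outer space.

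With (i) and (ii) in hand, I would run the transfinite induction of \cite{kv:contractible} verbatim: given a non-empty $\cS\subseteq\cR$, iteratively set $\cS_{j+1}$ to be the sub-family of $\cS_j$ on which $\|\cdot\|_{\sg_{j+1}}$ achieves its minimum (which exists as the minimum of a non-empty subset of $\bN$). Then $\bigcap_j\cS_j$ has at most one element by (ii), and this element is the desired minimum of $\cS$. The main obstacle, as in the original, will be verifying that this intersection is actually attained; the plan is to handle it exactly as in \cite{kv:contractible}, exploiting the discrete/simplicial character of the spine of Outer space to argue non-emptiness of $\bigcap_j\cS_j$.
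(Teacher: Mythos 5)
Your approach is essentially the paper's: the paper's entire proof is the single observation that each $w\in\cC$ determines a cyclic subgroup $\langle w\rangle\in\sgs$ with $\|\rho\|_w = \|\rho\|_{\langle w\rangle}$, so the classical length data sits inside the updated norm, after which \cite[Proposition~5.2]{kv:contractible} is invoked mutatis mutandis --- and your step (ii) rediscovers exactly this observation. The only quibble is that invoking Culler--Morgan length-function rigidity for injectivity is heavier than necessary (and stated for $\R$-trees rather than for the combinatorial length function on marked roses used here); the relevant injectivity is elementary and is already what makes \cite[Proposition~5.2]{kv:contractible} work, so the paper does not separately mention it.
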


\begin{proof}
With the observation that $w\in\cC$ determines an element of $\sg\in \sgs$ with $\|\rho\|_w=\|\rho\|_\sg$, the proof of \cite[Proposition~5.2]{kv:contractible} goes through {\it mutatis mutandis}.
\end{proof}

The following proposition is the analog of Prop~\ref{kv:prop}.
\begin{proposition} \label{p:main gersten}
Let $<$ be the updated well-ordering.  Then, for all $\rho\in\cR$, $\st(\rho) \cap L_{< \rho}$ is either empty or contractible.   
\end{proposition}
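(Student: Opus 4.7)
The plan is to transport the proof of Proposition~\ref{kv:prop} from \cite[Section~10]{kv:contractible} to the $\sgs$ setting almost verbatim, substituting the modified star graph of Section~\ref{s:Stallings} for the ordinary star graph and the volume function $\|\cdot\|_K$ for the circuit length $\|\cdot\|_w$. The key formal ingredients from Section~\ref{s:Stallings} are the decomposition $\|\mg\|_K = \|\rho\|_K + \sum_i |e_i|_{\mg,K}$ for $\mg\in\st(\rho)$, together with the observation that the contribution of a single ideal edge $\alpha$ to $|e|_{\mg,K}$ is $\sum_{Y\in\Y} |\alpha|_{\partial Y}$, where $\Y$ is the set of pieces of $K_\rho$. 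These play exactly the same role for $\|\cdot\|_K$ that the analogous formulas for circuit length play in \cite{kv:contractible}, so the quantity $\|\mg\|-\|\rho\|$ is determined by a finite, explicit combinatorial expression in the ideal edges defining $\mg$.

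Granting these ingredients, I expect the proof to proceed exactly as in \cite{kv:contractible}: given $\mg\in\st(\rho)\cap L_{<\rho}$, one expresses $\mg$ as a blow-up of $\rho$ along a compatible family of ideal edges, identifies the sub-family of ``reducing'' ideal edges (those whose associated one-edge Whitehead move strictly decreases the updated $\|\cdot\|$), and shows that the corresponding subcomplex of $\st(\rho)$ is contractible by an inductive collapse onto a distinguished reducing ideal edge supplied by Lemma~\ref{l:factorization}. The passage between points of $\st(\rho)$ and compatible families of ideal edges, the combinatorics of simultaneous collapses, and the simplex-wise deformations used by Vogtmann are all insensitive to whether the pieces $Y$ are arcs (as in the $\cC$ setting) or more general trees (as in the $\sgs$ setting); this scaffolding therefore transports without change.

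The single combinatorial fact in \cite[Section~10]{kv:contractible} that genuinely uses the special structure of the circuit star graph is \cite[Lemma~11.2]{kv:contractible}; its proof exploits that each piece $Y$ is an arc with $|\partial Y|=2$, so the star graph is an ordinary graph. In our setting, pieces can have $|\partial Y|$ arbitrarily large, and the modified star graph becomes a hypergraph. The necessary hypergraph analogue of \cite[Lemma~11.2]{kv:contractible} is Lemma~\ref{l:11.2}, stated and proved in Section~\ref{s:combinatorial}. This is the main obstacle, and the bulk of the genuinely new work: one must verify that the separation and connectivity statements underlying Vogtmann's choice of reducing Whitehead move continue to hold when partitions of $H$ can cut hyperedges of arbitrary arity. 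Once Lemma~\ref{l:11.2} is in hand, the remainder of the proof of Proposition~\ref{p:main gersten} is a routine rewriting of \cite[Section~10]{kv:contractible} with the modified star graph in place of the star graph.
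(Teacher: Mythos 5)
Your proposal matches the paper's proof essentially line for line: both observe that the scaffolding of \cite[Sections 6--10]{kv:contractible} transports verbatim once circuit length and the ordinary star graph are replaced by $\|\cdot\|_K$ and the modified star graph, and both correctly isolate \cite[Lemma~11.2]{kv:contractible} as the one combinatorial fact whose proof genuinely exploits $|\partial Y|=2$ and therefore needs the hypergraph replacement given as Lemma~\ref{l:11.2}. The only nuance the paper adds that you leave implicit is that the reason \cite[Lemma~11.2]{kv:contractible} needs a fresh proof is that the second half of \cite[Lemma~11.1]{kv:contractible} is simply false for pieces with $|\partial Y| > 2$, so the original derivation of 11.2 from 11.1 is unavailable; your explanation via the arity of hyperedges is an accurate paraphrase of the same obstruction.
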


\proof    The length function  $\|\tau\|_w$ is introduced in Section 5 of \cite{kv:contractible}.  We have already defined its analog $\|\tau\|_K$.   The statement and proofs in Sections 6--8 of \cite{kv:contractible} apply, using this updated norm, without change.   In particular, Lemma~\ref{l:factorization} of this paper holds for the updated well-ordering.  The  star graph $\Gamma_w$ 
is introduced in Section 9 of \cite{kv:contractible}.   We have already defined its analog $\Gamma_K$.
No changes are required in \cite[Section 10]{kv:contractible}, at the end of which Proposition~\ref{kv:prop} has been shown to follow from a key lemma \cite[Lemma~10.1]{kv:contractible}.

Our final modification concerns \cite[Lemmas 11.1 and  11.2]{kv:contractible}.  The second part of the former is not true in our current context and the given proof of the latter depends on the former.  We state and prove the $\sgs$-analog of \cite[Lemma 11.2]{kv:contractible} below as Lemma~\ref{l:11.2}.    With it  in place,  the proof of  \cite[Lemma~10.1]{kv:contractible} (which appears as the proof of \cite[Lemmas 11.3]{kv:contractible})  goes through without further changes and completes the proof of Proposition~\ref{p:main gersten}.
\endproof

\begin{remark}  \label{rmk:cor still hold} Corollaries~\ref{c:L is contractible} and \ref{l:whitehead auto} remain valid with the updated well-ordering because they follow from Propositions~\ref{kv:prop} and \ref{p:well order V} and Lemma~\ref{l:factorization}.
\end{remark}

 \medskip
 
We recall now a definition from \cite[Section 11]{kv:contractible}.  Given   an arbitrary set $A \subset H$, define $|A| =(|A|_{w_1}, |A|_{w_2},|A|_{w_3},\ldots) \in \Z^{\cC}$ where $|A|_w$ is the number of edges in the star graph $\Gamma_w$  with one endpoint in $A$ and the other in $A^c$.  Equivalently, $|A|_w$ is the cardinality of $\{Y \in \Y: \{A,A^c\} \text{ separates  the endpoints of }Y \}$,   where  $\Y = \{Y\}$  is the set of pieces associated to $w$ in Section~\ref{s:Stallings}.

We modify this to fit our context as follows.  Given $A \subset H$, $K \in \sgs$ and the set of pieces $\Y=\{Y\}$ of the Stallings graph $K_\rho$,  define:  
$$|A|_{\partial Y} =  1 \text{ if } \{A, A^c\} \text{  separates } \partial Y \text {and } 0 \text{ otherwise}$$
$$|A|_K  = \sum_{Y \in \Y} |A|_{\partial Y} \qquad \qquad  |A|_\sgs = (|A|_{K_1}, |A|_{K_2},|A|_{K_3},\ldots) \in \Z^{\sgs}$$

\medskip

The following lemma is {\cite[Lemma~11.2]{kv:contractible}} adapted to our current context.
\begin{lemma}[cf.\ {\cite[Lemma~11.2]{kv:contractible}}]\label{l:11.2}
Let $A$ and $B$ be subsets of $H$. Then $$|A\cap B|_\sgs+|A\cup B|_\sgs\le|A|_\sgs+|B|_\sgs$$
\end{lemma}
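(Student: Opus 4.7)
The plan is to reduce the inequality, which is stated coordinate-wise in the ordered group $\Z^{\sgs}$ of all sequences indexed by $\sgs$, to a completely local statement about a single piece $Y$. Since $|A|_\sgs$ is defined componentwise by $|A|_K = \sum_{Y \in \Y} |A|_{\partial Y}$ (with $\Y = \Y(K,\rho)$ the set of pieces of the Stallings graph $K_\rho$), the inequality will follow from the pointwise claim
\[
|A \cap B|_{\partial Y} + |A \cup B|_{\partial Y} \;\le\; |A|_{\partial Y} + |B|_{\partial Y}
\]
for every $K \in \sgs$ and every piece $Y$ of $K_\rho$: summing over $Y$ gives the inequality for each $K$, and then assembling over $K$ gives the claimed inequality in $\Z^{\sgs}$.

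To prove the per-piece inequality, I would argue by a short case analysis on the two indicator quantities on the left. Recall that $|C|_{\partial Y} \in \{0,1\}$ is $1$ exactly when $\partial Y$ meets both $C$ and $C^c$. First, if both $A \cap B$ and $A \cup B$ separate $\partial Y$, then $\partial Y$ meets $A \cap B$ (so it meets $A$ and meets $B$) and meets $(A \cup B)^c = A^c \cap B^c$ (so it meets $A^c$ and meets $B^c$); hence both $A$ and $B$ separate and the right-hand side is $2$. Next, if neither separates, there is nothing to prove. Finally, if exactly one of $A \cap B$ or $A \cup B$ separates $\partial Y$, the two sub-cases are symmetric: for example, if $A \cup B$ separates but $A \cap B$ does not, then $\partial Y$ meets $A \cup B$ (so it meets $A$ or $B$) and meets $A^c \cap B^c$ (so it meets both $A^c$ and $B^c$), whence whichever of $A,B$ meets $\partial Y$ also has nonempty complementary intersection, and therefore separates $\partial Y$. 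The case where $A \cap B$ separates but $A \cup B$ does not is essentially the dual argument, using that $\partial Y \subset A \cup B$ forces $\partial Y$ to meet both $A$ and $B$, while $\partial Y \cap (A^c \cup B^c) \ne \emptyset$ gives a point of $\partial Y$ outside $A$ or outside $B$.

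This is just the standard submodularity of the symmetric cut indicator on the finite set $\partial Y$, so there is no real obstacle; the only thing to be careful about is the quantifier flip between the $\cC$-version in \cite[Lemma~11.2]{kv:contractible}, where pieces $Y$ have $\partial Y$ of size exactly two and the argument is immediate, and the $\sgs$-version here, where $\partial Y$ can have arbitrary (possibly large) cardinality. The case analysis above handles this uniformly, which is exactly the content the authors need to replace the $\cC$-argument.
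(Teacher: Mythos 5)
Your proof is correct and follows essentially the same approach as the paper: both reduce the inequality in $\Z^{\sgs}$ to the per-piece submodularity inequality $|A\cap B|_{\partial Y}+|A\cup B|_{\partial Y}\le|A|_{\partial Y}+|B|_{\partial Y}$ and then verify it by a finite case analysis. The paper organizes its cases by which of the four Venn regions $A\cap B$, $A^c\cap B^c$, $A\cap B^c$, $B\cap A^c$ the set $\partial Y$ meets, whereas you organize by the values of the two left-hand indicators, but the mathematical content is identical.
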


\begin{proof}
It is enough to prove the following inequality for each $K \in \sgs$ and each $Y \in \Y$ where $\Y$ is the set of pieces in $K_\rho$:
$$|A\cap B|_{\partial Y}+|A\cup B|_{\partial Y}\le|A|_{\partial Y}+|B|_{\partial Y}$$

We have a partition of $H$ into disjoint subsets $\bZ=A\cap B$, $\bW=\bar A\cap \bar B$, $\bX=A\cap\bar B$, and $\bY=B\cap \bar A$. There are cases depending on how $\partial Y$ intersects these subsets. Taking into account that $\partial Y\not=\emptyset$, that $A$ and $A^c$ determine the same partition of $H$, and that the inequality in the statement is symmetric in $A$ and $B$, we are reduced to the following cases.

\begin{itemize}
\item
$\partial Y$ meets exactly one of $\bZ$, $\bW$, $\bX$, and $\bY$: $0+0\le 0+0$
\item
$\partial Y$ meets exactly two of $\bZ$, $\bW$, $\bX$, and $\bY$:
\begin{itemize}
\item
$\bZ$ and $\bW$: $1+1\le 1+1$
\item
$\bZ$ and $\bX$: $1+0\le 0+1$
\item
$\bX$ and $\bY$: $0+0\le 1+1$
\end{itemize}
\item
$\partial Y$ meets exactly 3 of $\bZ$, $\bW$, $\bX$, and $\bY$:
\begin{itemize}
\item
not $\bX$: $1+1\le1+1$
\item
not $\bZ$: $0+1\le 1+1$
\end{itemize}
\item
$\partial Y$ meets $\bZ$, $\bW$, $\bX$, and $\bY$: $1+1\le 1+1$
\end{itemize}
\end{proof}

As mentioned above, this also finishes the proof of Proposition~\ref{p:main gersten}. \qed

\subsection{Applying Proposition~\ref{p:gfp}} \label{s:applying gfp}

Given $\vec K =(K_1,\cdots,K_N)\in X_\sgs$, choose an extension $K_1,\cdots,K_N, K_{N+1},\cdots$ of $\vec K$ to an infinite sequence  that contains each element of $\sgs$ and then define  a well-ordering $<$ on $\cR$ using this infinite sequence as in Section~\ref{s:case sgs}.     If $\rho \in \cR$ is not minimal in $<$  then by Remark~\ref{rmk:cor still hold}  and Corollary~\ref{l:whitehead auto}  there exists $\rho' \in \cR$ such that  $\rho'   < \rho$ and such that  $\rho'$ differs from $\rho$ by a Whitehead move. Thus, starting with  any $\rho\in\cR$, we can find $\rho_{min}=(R_{min}, r_{min})$.  Define    $\vec z_{min} = \|\rho_{min}\|_{\vec K}=(\|\rho_{min}\|_{K_1}, \ldots,\|\rho_{min}\|_{K_N })$.  Let $\cR_{\vec K}$ be the  initial interval of $<$ such that $\rho\in \cR_{\vec K}$ if and only if $\|\rho\|$ starts with $\vec z_{min}$;  let $L_{\vec K} = \cup_{\rho' \in \cR_{\vec K}}St(\rho')$.  Since $\cR_{\vec K}$ is an initial interval of $\cR$ and since the complement of $\cR_{\vec K}$ in $\cR$ has a first element,  Corollary~\ref{c:L is contractible} implies that $L_{\vec K}$ is contractible.  

By Proposition~\ref{p:gfp}, the following lemma completes the proof of Theorem~\ref{T1}.  
\begin{proposition} \label{presentation family 1}
$\{L_{\vec K}\}$ is a \pf\ in $L$ for the action of $\Out(F_n)$ on $X_\sgs$.
\end{proposition}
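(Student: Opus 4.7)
The plan is to verify each of the four conditions in Definition~\ref{defn:gfp} for $G = \Out(\f)$, $X = X_\sgs$ with the left action from Section~\ref{s:background}, and the assignment $\vec K \mapsto L_{\vec K}$. Contractibility of $L_{\vec K}$ was just established, and $G_{\vec K}$-invariance follows from Proposition~\ref{p:fix}\pref{i:fix orbits}.

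Conditions (1) and (2) are algorithmic preliminaries I would dispatch first. Condition (1) is the classical decidability of multiplication in $\Out(\f)$. For (2), given $\theta$ and $\vec K, \vec K'$, apply an automorphism representative of $\theta$ to explicit generators to obtain $\theta(\vec K)$; then build the marked Stallings graphs of $\theta(\vec K)$ and $\vec K'$ over $R_n$ via Remark~\ref{r:construct stallings} and apply Proposition~\ref{p:fix}\pref{i:conj char} to test label-preserving isomorphism. For condition (3): the action of $\Out(\f)$ on the spine is standard Culler--Vogtmann; the absence of inversions (3a) holds because the $1$-cells of $L$ are oriented by strict forest collapse; for (3b), each $\theta \in G_{v,w}$ with $v=[(G,g)]$ and $w=[(G',g')]$ is pinned down by a cellular isomorphism $h:G\to G'$ through $h\circ g\circ f\simeq g'$ for any representative $f$ of $\theta$, so enumerating the finitely many such $h$ produces $G_{v,w}$.

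For condition (4b), combine Remark~\ref{r:fix} with the observation that $\min_{\rho\in\cR}\|\rho\|_{\vec K}$ depends only on the $\Out(\f)$-orbit of $\vec K$ (because $\Out(\f)$ acts transitively on $\cR$), so $\theta\cdot\tau\in L_{\vec K}$ iff $\|\tau\|_{\theta(\vec K)}$ attains the minimum iff $\tau\in L_{\theta(\vec K)}$. The substantive condition is (4a). My algorithm: first locate a minimal rose $\rho_{min}\in\cR_{\vec K}$ and its norm $\vec z_{min}$ via repeated Whitehead moves (Corollary~\ref{l:whitehead auto}); then perform breadth-first search on roses in $\cR_{\vec K}$ along single Whitehead moves, maintaining a set $S$ of $G_{\vec K}$-orbit representatives. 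For each newly visited rose $\rho$, construct the marked Stallings graph $\vec K_{[\rho]}$ and use Proposition~\ref{p:fix}\pref{i:fix orbits} (together with \pref{i:conj char} to test equivalence of Stallings graphs) to decide whether $\rho$ is already represented in $S$. Set $D_{\vec K}$ to be the union of the stars of the elements of $S$.

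The main obstacle is arguing that this BFS terminates and enumerates every $G_{\vec K}$-orbit of vertices in $L_{\vec K}$. Termination reduces, via Proposition~\ref{p:fix}\pref{i:fix orbits}, to the fact that there are only finitely many equivalence classes of marked Stallings graphs of the form $\vec K_{[\rho]}$ with $\rho\in\cR_{\vec K}$: the source has total volume $\vec z_{min}$ and no valence-$\leq 2$ vertices (hence boundedly many vertices and edges), while the target is a rose of rank $n$. For completeness, I need that any two roses in $\cR_{\vec K}$ are joined by a sequence of Whitehead moves entirely within $\cR_{\vec K}$; this I would deduce from $L_{\vec K} = \cup_{\rho\in\cR_{\vec K}} St(\rho)$ together with its connectivity, since a path between two roses in $L_{\vec K}$ may be localized to stars of roses in $\cR_{\vec K}$, and each overlap $St(\rho)\cap St(\rho')\neq\emptyset$ provides a common over-graph whose decomposition into single edge collapses yields the required Whitehead steps. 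Once these are in place, Proposition~\ref{p:gfp} delivers Theorem~\ref{T1}.
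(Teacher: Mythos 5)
Your overall strategy matches the paper's: verify the four conditions of Definition~\ref{defn:gfp}, with conditions (1)--(3) and (4b) dispatched by standard arguments (automorphism comparison, Stallings graphs via Remark~\ref{r:construct stallings}, cellular isomorphism enumeration, Remark~\ref{r:fix}), and condition (4a) handled by a Whitehead-move search from $\rho_{min}$ maintaining $\Fix(\vec K)$-orbit representatives via Proposition~\ref{p:fix}. The termination argument (finitely many equivalence classes of Stallings graphs of prescribed volume) is the same as the paper's.

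However, the completeness argument for your BFS has a genuine gap. You claim that any two roses in $\cR_{\vec K}$ are joined by Whitehead moves \emph{entirely within} $\cR_{\vec K}$, and propose to deduce this from connectivity of $L_{\vec K}$: localize a path to stars of roses in $\cR_{\vec K}$, then decompose each overlap $St(\rho)\cap St(\rho')\neq\emptyset$ into single edge collapses. The problem is that the intermediate roses arising from such a decomposition need not lie in $\cR_{\vec K}$ --- a maximal-tree collapse of the common over-graph $G$ can give a rose $\rho''$ with $\|\rho''\|_{K_i}>\|\rho_{min}\|_{K_i}$ for some $i\le N$. Since your BFS only explores within $\cR_{\vec K}$, such a detour could cause it to miss orbits, and the argument as written does not rule this out. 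The paper avoids this by exploiting the well-ordering: $\cR_{\vec K}$ is an \emph{initial interval} of $<$, so if $\rho\in\cR_{\vec K}\setminus U$ is chosen minimal, then Corollary~\ref{l:whitehead auto} (via Remark~\ref{rmk:cor still hold}) supplies $\rho'<\rho$ one Whitehead move away, and $\rho'<\rho$ automatically forces $\rho'\in\cR_{\vec K}$; this is what makes the descent stay inside $\cR_{\vec K}$. You do invoke Corollary~\ref{l:whitehead auto}, but only to locate $\rho_{min}$ at the outset --- the same corollary, combined with the initial-interval observation, is exactly what your completeness step needs, and would replace the over-graph-decomposition heuristic with a correct argument.
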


\proof We check the conditions in Definition~\ref{defn:gfp}. 

We represent elements of $\Out(\f)$ as homotopy equivalences of $\Rn$ or as automorphisms given in terms of their effects on a basis. In the latter case, two such can be composed and are equal iff one composed with the inverse of the other is inner; something that can be checked on a basis. Thus Definition~\ref{defn:gfp}\pref{item:properties of G} holds.

We represent elements of $X_\sgs$ as marked Stallings graphs $(\Gamma\to \Rn,id_{\Rn})$; see Section~\ref{s:Stallings} and especially Proposition~\ref{p:fix}\pref{i:conj char}. Given $\vec K_1, \vec K_1\in X_\sgs$, and $\phi\in\Out(\f)$, we can compare the Stallings graphs for $\phi(\vec K_1)$ and $\vec K_2$ using Remark~\ref{r:construct stallings}. Thus Definition~\ref{defn:gfp}\pref{item:properties of X} holds.

The action of $\Out(F_n)$ on $L$ preserves the topological types of the vertices of $L$ and so does not invert edges.  Thus \pref{item:no inversion} holds.

For \pref{item:Gvw}, suppose that we are given vertices $\tau = (G,g)$ and $\tau'=(G',g')$ of $L$.  Decide if   $G$ and $G'$ are cellularly isomorphic.  If {\tt NO}  then there does not exist $\theta \in \Out(F_n)$ such that $ \tau \cdot  \theta = \tau'$.  If {\tt YES}  then the finite set of $\theta\in\Out(\f)$ satisfying $\tau\cdot \theta=\tau'$ can be algorithmically listed.  Indeed, such a $\theta$ is determined by a homotopy equivalence of the form $(g')^{-1}hg$ where $h:G\to G'$ is a cellular isomorphism.  Thus \pref{item:Gvw} holds.

  For \pref{item:natural}, let $\theta\in\Out(\f)$ and $\rho\in\cR$. By Remark~\ref{r:fix}, $\|\rho\|_{\theta\vec K}=\|\rho\theta\|_{\vec K}:=\|\theta^{-1}\rho\|_{\vec K}$. 
  Hence $\rho\in\theta\cR_{\vec K}$ iff $\rho\in \cR_{\theta\vec K}$. Thus \pref{item:natural} holds.

It remains to check \pref{item:domain for Lx}. For $S'\subset\cR$, let $[S']$ denote the set of equivalence classes of the Stallings graphs $\{\vec K_{\rho}\mid \rho\in S'\}$; see Section~\ref{s:Stallings}.  We inductively construct  a finite set $S$ of $\Fix(\vec K)$-orbit representatives for $\cR_{\vec K}$. Start with $S_1:=\{\rho_{min}\}$. Given $S_i$, let $W(S_i)$ be the subset of $\cR_{\vec K}$ consisting of elements obtained from elements of $S_i$ by a Whitehead move. If $[W(S_i)]=[S_i]$ then $S:=S_i$. Otherwise, $S_{i+1}:=S_i\cup W(S_i)$. (We could be more efficient here by only adding new equivalence classes.) This process ends since there are only finitely many equivalence classes with a prescribed volume.

We claim that the union $U$ of the $\Fix(\vec K)$-orbits of $S$ equals $\cR_{\vec K}$. Indeed, since $\cR_{\vec K}$ is $\Fix(\vec K)$-invariant and $S\subset \cR_{\vec K}$, we have that $U\subset \cR_{\vec K}$. For the other inclusion, recall that $\cR_{\vec K}$ is an initial interval of the total order $<$ on $\cR$. Suppose that $\rho\in \cR_{\vec K}\setminus U$ is minimal. By Remark~\ref{rmk:cor still hold} there is $\rho'\in \cR_{\vec K}$ obtained from $\rho$ by a Whitehead move such that $\rho'<\rho$. So there is $\theta\in\Fix(\vec K)$ such that $\rho'\theta\in S$. Hence (see Section~\ref{s:background}) $\rho\theta$ differs from an element of $S$ by a Whitehead move. By the definition of $S$, there is $\rho''\in S$ such that $\vec K_{\rho''}=\vec K_{\rho\theta}$. By Proposition~\ref{p:fix}\pref{i:fix orbits} $\rho\theta$, hence also $\rho$, is in the $\Fix(\vec K)$-orbit of $\rho''\in S$, a contradiction.
It follows that $\st(U) = L_{\vec K}$. Defining  $D_{\vec K}=\cup_{\rho\in S} St(\rho)$, we see that \pref{item:domain for Lx} holds.
\endproof

\section{From \cite{kv:equivariant}}\label{s:equiWhitehead}
The rest of the paper is devoted to proving our main result, Theorem~\ref{T2}.  Now the key work is \Krstic\ and Vogtmann's  \cite{kv:equivariant} and we mimic proofs there.
As in the proof of Theorem~\ref{T1}, the main work is showing that a certain subcomplex of the spine of Outer space is contractible.  As in the proof of Theorem~\ref{T1}, we  follow the proof of contractibility when $\sgs$ is replaced by $\cC$, making modifications as necessary for our expanded context.

We start by reviewing needed background material; see especially \cite[Sections  3(B) and 3(C)]{kv:equivariant}. 
Fix a finite subgroup $\group < \Out(\f)$ and define the {\it equivariant spine} to be the subcomplex of the spine of Outer space that is fixed by the action of $\group$. 
 By definition then, the vertices of the equivariant spine are equivalence classes of marked {$\group$-graphs $\Gmg =(\gph, s)$,  i.e.\ $\gph$ is equipped with an action $h:\group\to\Aut(\gph)$, such that if $f_x:R_n\to R_n$ represents $x\in  \group$ then $h(x)\circ s$ is homotopy equivalent to $s\circ f_x$.  Here $\Aut(\gph)$ is the (finite) isometry group of $\gph$ where we have identified each edge of $\gph$ with the unit interval. It is required that $\group$ inverts no edge of $\gph$, a requirement that can be achieved by adding midpoints of inverted edges if necessary. Collapsing $\group$-equivariant forests gives the set of vertices of the equivariant spine a partial order that agrees with the one inherited from the spine of Outer space.  We view the equivariant spine as the geometric realization of this poset.

An edge of a $\group$-graph $\gph$ is {\it inessential} if it is contained in every maximal $\group$-invariant forest of $\gph$. $\gph$ is {\it essential} if no edge is inessential, all vertices have valence at least two and the two edges terminating at a bivalent vertex are in the same $\group$-orbit.
 $L_\group$ denotes the subcomplex of $K_\group$ spanned by essential graphs.  $L_\group$ is a deformation retract of the equivariant spine  \cite[Proposition~3.3]{kv:equivariant}.

The role of roses in this setting is played by {\it reduced} $\group$-graphs, i.e.\ vertices $(\gph,s)\in L_\group$ without a non-trivial $\group$-forest. The set of reduced $\group$-graphs is denoted $\cR_\group$.  Every element of $L_\group$ is contained in the  star of an element of $\cR_\group$. 

If $f: \gph\to \gph'$ is a homotopy equivalence of reduced $\group$-graphs of the form $\gph\leftarrow G\to \gph'$ where $G$ is a $\group$-graph and both maps collapse an edge orbit then we say that $(\gph', f\circ s)$ is obtained from $(\gph, s)$ by a {\it Whitehead move}.

\begin{proposition} [{\cite[Proposition 5.11]{kv:equivariant}}] Any two elements of $\cR_\group$ are connected by a sequence of Whitehead moves.
\end{proposition}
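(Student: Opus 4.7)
The plan is to mimic the non-equivariant arguments of Corollaries~\ref{c:L is contractible} and \ref{l:whitehead auto}, adapted to reduced $\group$-graphs. First, fix an enumeration $(\sg_1,\sg_2,\ldots)$ of $\sgs$ and define on each vertex $\Gmg$ of $L_\group$ the norm $\|\Gmg\| = (\|\Gmg\|_{\sg_1}, \|\Gmg\|_{\sg_2}, \ldots) \in \Z^{\bN}$, inducing a well-order $<$ on $\cR_\group$ by the same argument as Proposition~\ref{p:gersten order}. Write $L_{\group,<\rho} = \cup_{\rho'<\rho}\st(\rho')$, interpreted in $L_\group$.

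The core step is to establish the equivariant analog of Proposition~\ref{p:main gersten}: for each $\rho \in \cR_\group$, the intersection $\st(\rho) \cap L_{\group,<\rho}$ is either empty or contractible. This follows the template of the non-equivariant proof but replaces Lemma~\ref{l:11.2} by its equivariant counterpart, which is precisely what the replacements Lemmas~\ref{l:7.2} and \ref{l:7.1} for \cite[Lemmas 7.1 and 7.2]{kv:equivariant} will supply in Section~\ref{s:combinatorial}, together with an equivariant version of Lemma~\ref{l:factorization} producing a Whitehead-move neighbor whenever $\st(\rho) \cap L_{\group,<\rho}$ is non-empty. A transfinite induction identical to the proof of Corollary~\ref{c:L is contractible} then shows that $L_{\group,<\rho}$ is contractible, and in particular connected, for every non-minimal $\rho \in \cR_\group$.

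From this connectedness the descent step follows exactly as in Corollary~\ref{l:whitehead auto}: letting $\hat\rho$ be the $<$-successor of $\rho$, the union $L_{\group,<\hat\rho} = L_{\group,<\rho} \cup \st(\rho)$ is connected, so $\st(\rho) \cap L_{\group,<\rho} \neq \emptyset$, and the equivariant factorization lemma then yields $\rho' \in \cR_\group$ with $\rho'<\rho$ differing from $\rho$ by a Whitehead move. Iterating from any starting $\rho$ produces a strictly $<$-decreasing chain of Whitehead moves that, by well-ordering, terminates at the unique $<$-minimum; hence every $\rho \in \cR_\group$ is joined to this minimum by a sequence of Whitehead moves, and consequently any two elements of $\cR_\group$ are connected through it. The principal obstacle is the equivariant contractibility of $\st(\rho) \cap L_{\group,<\rho}$, which rests on the combinatorial replacements for \cite[Lemmas 7.1 and 7.2]{kv:equivariant}; once those technical lemmas are in hand, the descent is routine.
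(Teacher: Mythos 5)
Your proposed derivation is circular. You invoke ``a transfinite induction identical to the proof of Corollary~\ref{c:L is contractible}'' to conclude that $L_{\group,<\rho}$ is contractible for every non-minimal $\rho$, and thence that $L_\group$ is connected. But look at the second paragraph of that proof: after the inductive step has shown that each \emph{component} of $L_{<\rho}$ is contractible, the argument rules out a second component $B$ precisely by appealing to the separately established fact that $L$ is connected (\cite[Propositions~4.1 and 6.1]{kv:contractible}). The norm-based induction by itself never merges components; if $\st(\rho)\cap L_{<\rho}=\emptyset$ a new component is born, and the induction cannot a priori exclude this. In the equivariant setting the analogous input would be connectedness of $L_\group$ --- but that is \cite[Corollary~5.12]{kv:equivariant}, which is itself a consequence of the Proposition~5.11 you are trying to prove. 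So the descent step you describe assumes exactly what it is meant to establish.

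This is also not what the paper does: the statement is imported verbatim from \cite{kv:equivariant} and treated as background, precisely because it (and the connectedness of $L_\group$ that follows from it) must be in hand \emph{before} the norm-and-well-ordering machinery can get off the ground. Krsti\'{c}--Vogtmann's own proof of Proposition~5.11 is a direct combinatorial argument about ideal edges and blow-ups in reduced $\group$-graphs, not a descent along the $<_\group$ order. If you want a self-contained proof you would need to reproduce an argument of that type, or else first establish connectedness of $L_\group$ by some independent means.
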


\begin{corollary}  [{\cite[Corollary 5.12]{kv:equivariant}}] $ L_\group$ is connected.
\end{corollary}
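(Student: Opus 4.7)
The plan is to deduce connectedness of $L_\group$ directly from the preceding proposition together with the fact, already noted in the paper, that every element of $L_\group$ lies in the star of some reduced $\group$-graph. Thus it suffices to show that for any two $\rho, \rho' \in \cR_\group$, the stars $\st(\rho)$ and $\st(\rho')$ lie in the same path component of $L_\group$.

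First I would reduce to the case where $\rho'$ is obtained from $\rho$ by a single Whitehead move: by the preceding proposition there is a chain $\rho = \rho_0, \rho_1, \ldots, \rho_k = \rho'$ in $\cR_\group$ with consecutive terms related by a Whitehead move, so if adjacent stars share a point we can concatenate paths to connect $\rho$ to $\rho'$. For a single Whitehead move $\rho_i \to \rho_{i+1}$, the definition gives a $\group$-graph $G$ together with edge-orbit collapses $G \to \rho_i$ and $G \to \rho_{i+1}$, and hence a marked $\group$-graph (with marking induced from either side) whose equivalence class is a vertex of $L_\group$ lying in both $\st(\rho_i)$ and $\st(\rho_{i+1})$. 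This places $\rho_i$ and $\rho_{i+1}$ in the same component.

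The only thing to check carefully is that the intermediate $G$ really does represent a vertex of $L_\group$, i.e.\ is essential in the sense defined in the paper (no inessential edges, valence and bivalent-vertex conditions, and $\group$ inverts no edge). But $G$ is $\group$-equivariantly obtained from the reduced graph $\rho_i$ by an elementary $\group$-equivariant expansion inverse to an edge-orbit collapse, and reducedness of $\rho_i$ combined with the structure of a Whitehead move forces $G$ to be essential; this is precisely what justifies that $G \in L_\group$ in \cite{kv:equivariant}.

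Finally, combining this connectivity between adjacent stars along the chain with the fact that every vertex of $L_\group$ belongs to $\st(\rho)$ for some $\rho \in \cR_\group$, we conclude that any two vertices of $L_\group$ are joined by a path inside $L_\group$, proving $L_\group$ is connected. The main potential obstacle is the essentiality check for $G$, but it follows from the hypotheses in the definition of Whitehead move and will not require new ideas.
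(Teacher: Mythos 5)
Your proof is correct and is essentially the standard argument (the paper itself cites this corollary from \cite{kv:equivariant} without reproving it, and your reconstruction matches the argument there). The two ingredients you use — that every vertex of $L_\group$ lies in $\st(\rho)$ for some $\rho\in\cR_\group$, and that a Whitehead move $\rho\leftarrow G\to\rho'$ yields a common vertex of $\st(\rho)$ and $\st(\rho')$ — are exactly what the cited Proposition 5.11 is designed to feed into. One small remark on the essentiality check you flag: the cleanest way to see $G$ is essential is that $G$ collapses two \emph{distinct} edge orbits $\group e_1,\group e_2$ onto the reduced graphs $\rho,\rho'$; any inessential edge of $G$ would have to lie in every maximal $\group$-forest, hence (since $\rho=G/\group e_1$ and $\rho'=G/\group e_2$ are reduced) in both $\group e_1$ and $\group e_2$, which are disjoint — a contradiction. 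The valence conditions follow similarly because both collapses yield graphs in $L_\group$. This is a minor elaboration of what you wrote, not a gap.
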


 \medskip
 
  Following \cite[Section 6]{kv:equivariant}, we  define a norm on $\cR_\group$ and a $\group$-star graph.    Given $K \in \sgs$ and   $\Gmg =(\gph, s)$  define 
  $$\|\Gmg\|_{\group, K} = \sum_{x \in \group} \|\Gmg\|_{xK} =  |\group|\cdot \|\Gmg\|_K$$  
  where  the volume  $ \|\Gmg\|_K$ is defined in Section~\ref{s:Stallings}.
  As in the proof of Theorem~\ref{T1}, we fix a sequence $\vec K = (K_1,K_2,\ldots )$ in $\sgs$ that includes all elements of $\sgs$.  For each   $\Gmg\in \cR_\group$, define 
  $$\|\Gmg\|_\group = (\|\Gmg\|_{\group,K_1}, \|\Gmg\|_{\group,K_2},\ldots) \in \Z^{\sgs}$$
   and order $\cR_\group$ by 
   $\Gmg <_\group \Gmg'$ if $\|\Gmg\|_\group < \|\Gmg'\|_\group$.
  
  \medskip 
   
The Stallings graph $K_\Gmg$, and its decomposition into pieces $\sfY = \{Y_i\}$, is defined as in Section~\ref{s:Stallings} with the role of $H$ being played by the set $E(\gph)$ of oriented edges of $G$.  In particular,  the elements of $\partial Y_i$ are labeled by distinct elements of  $E(\gph)$.
  The $\group$-star graph for  $K \in \sgs$ and   $\Gmg =(\gph, s)$ is defined to be  the graph formed by \lq superimposing\rq\ the star graphs of $xK$   for each $x\in \group$ and so has the following description.    Begin, as in the non-equivariant case, with a set $V$  of vertices  labeled by the elements of  $E(G)$. Then, for each $x \in \group$ and $Y\in \sfY$, add a copy of $xY$ with $  \partial (xY)$ attached to $V$ according to its labels.

 Propositions~\ref{p:well order equivariant} and \ref{kv:prop equivariant} below are the analogs of Propositions~\ref{p:gersten order} and Lemma~\ref{l:factorization}.    Their $\cC$ versions appear as Proposition~6.3 and Lemma~6.5 of \cite{kv:equivariant}.  The proofs  given there carry over to the  $\sgs$ case without modification.

\begin{proposition}   \label{p:well order equivariant}The set $\cR_\group$ of roses is well-ordered by $<_\group$.
\end{proposition}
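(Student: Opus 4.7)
The plan is to adapt the proof of \cite[Proposition 6.3]{kv:equivariant} to the $\sgs$-setting. Well-orderedness of $\cR_\group$ by $<_\group$ reduces to two assertions about the map $\Gmg \mapsto \|\Gmg\|_\group \in \Z^\sgs$: (i) it is injective on equivalence classes of reduced $\group$-graphs, and (ii) for every $m$ and every prescribed tuple $(v_1,\ldots,v_m) \in \Z^m$, only finitely many equivalence classes $\Gmg \in \cR_\group$ satisfy $\|\Gmg\|_{\group,K_j} = v_j$ for all $j \leq m$. Granted (i) and (ii), well-orderedness is routine: to extract the minimum of any non-empty $S \subseteq \cR_\group$, successively minimize each coordinate $\|\cdot\|_{\group,K_1}, \|\cdot\|_{\group,K_2},\ldots$ over the subset of $S$ achieving the previously chosen values. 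By (ii) the process stabilizes at a unique tuple of values after finitely many steps, and by (i) this tuple is realized by a single element.

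To obtain (i), I would exploit the fact that the enumeration $(K_1,K_2,\ldots)$ of $\sgs$ includes, in particular, every cyclic subgroup. For $K = \langle a \rangle$ with conjugacy class $w$ of $a$, the Stallings graph $K_\Gmg$ is a circle of length $\|\Gmg\|_w$, so $\|\Gmg\|_{\group,K} = |\group|\cdot \|\Gmg\|_w$. Hence the $\sgs$-norm $\|\Gmg\|_\group$ contains, as a subsequence, the $\cC$-norm studied in \cite{kv:equivariant}, and (i) follows immediately from the injectivity already established in the $\cC$-case version of this proposition in that reference.

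For (ii), the plan is to reuse verbatim the combinatorial argument of \cite[Proposition 6.3]{kv:equivariant}: fixing bounds on $\|\Gmg\|_w$ for a sufficiently rich finite collection of $w \in \cC$ bounds the number of edge orbits of $\gph$, the valences of its vertices, and the combinatorial data of the $\group$-action, which together admit only finitely many essential, reduced $\group$-graphs up to equivalence. By the previous paragraph, any such finite collection of $w$'s corresponds to cyclic subgroups appearing among the first $m$ entries of $(K_j)$ for some $m$, so the required finiteness transfers to bounded initial coordinates of $\|\Gmg\|_\group$. The main obstacle, if any, lies in the combinatorial bookkeeping required for (ii); but the argument of \cite{kv:equivariant} already dispatches exactly this issue using essentiality and reducedness (built into the definitions of $L_\group$ and $\cR_\group$), and no modification is required in the present setting.
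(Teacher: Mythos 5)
Your approach is essentially the same as the paper's. The paper's proof is a one-liner: it asserts that the proof of \cite[Proposition~6.3]{kv:equivariant} carries over to the $\sgs$-setting without modification, with the underlying reason (made explicit in the paper's non-equivariant analogue, Proposition~\ref{p:gersten order}) being exactly the observation you highlight: every $w\in\cC$ corresponds to a cyclic $K\in\sgs$ with $\|\Gmg\|_w=\|\Gmg\|_K$, so the $\cC$-norm is a subsequence of the $\sgs$-norm and the injectivity and finiteness arguments transfer.

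One point needs tightening. As stated, your assertion (ii) — that for \emph{every} $m$ the fiber over a fixed prefix $(v_1,\ldots,v_m)$ is finite — is false: e.g.\ $m=1$ imposes no bound on $\gph$ when $K_1$ carries only a small part of $\pi_1$. Your own argument for (ii) in fact only yields it for $m$ large enough that $(K_1,\ldots,K_m)$ contains a sufficiently rich finite family of cyclic subgroups; that weaker version is what is available and is also what suffices, since in the successive-minimization scheme it is enough that $S_m$ become finite for \emph{some} $m$ (the $S_j$ are decreasing and nonempty, so finiteness at one stage forces stabilization, and then (i) gives a singleton). So state (ii) with "for sufficiently large $m$" and the argument goes through as intended.
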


Define  $L_{<_\group}( \Gmg) = \cup_{ \Gmg'  < \Gmg  }St(\Gmg)$

\begin{lemma}  \label{l:Gfactorization}If $\st(\Gmg) \cap L_{<_\group}( \Gmg) $ is not empty then it contains an element of $\cR$ that is obtained from $\Gmg$ by a Whitehead {move}.  
\end{lemma}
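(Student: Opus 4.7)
The plan is to adapt the proof of the $\cC$-version of this lemma, \cite[Lemma~6.5]{kv:equivariant}, which depends on the norm only through two features: the expression of $\|\Gmg\|_{\group, K}$ as a $\group$-sum of piecewise contributions, and the submodular inequality for these contributions. The first is provided by the identity $\|\Gmg\|_{\group, K} = |\group| \cdot \|\Gmg\|_K$ together with the piece decomposition of $K_\Gmg$ described in Section~\ref{s:Stallings}, and the second is Lemma~\ref{l:11.2} applied componentwise. Consequently the proof transfers without essential change.

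In outline, I would pick $\mg \in \st(\Gmg) \cap L_{<_\group}(\Gmg)$. After collapsing a maximal equivariant forest of $\mg$, I may assume $\mg$ collapses to a reduced $\Gmg^* \in \cR_\group$ with $\Gmg^* <_\group \Gmg$; enlarging $\mg$ equivariantly if necessary, I may also assume that $\mg$ is a common blowup of $\Gmg$ and $\Gmg^*$ in the poset. Then $\mg$ is obtained from $\Gmg$ by simultaneously blowing up a finite equivariant family $\mathcal E$ of ideal edges, decomposed into $\group$-orbits $\mathcal E_1, \ldots, \mathcal E_m$. For each $j$, blowing up $\Gmg$ using only $\mathcal E_j$ and collapsing the appropriate edge orbit of $\Gmg$ realizes a single Whitehead move from $\Gmg$ to some $\Gmg_j \in \cR_\group$ lying in $\st(\Gmg)$.

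The key step is to show that at least one $\Gmg_j$ satisfies $\Gmg_j <_\group \Gmg$; that element is then the desired Whitehead-move neighbor in $\st(\Gmg) \cap L_{<_\group}(\Gmg)$. Suppose toward a contradiction that $\|\Gmg_j\|_\group \geq \|\Gmg\|_\group$ for every $j$. Process the orbits $\mathcal E_j$ one at a time in an order compatible with the refinement partial order on subsets of the oriented edges of $\Gmg$, and apply Lemma~\ref{l:11.2} at each step to bound the cumulative norm change by the sum of the individual changes. This gives $\|\mg\|_\group \geq \|\Gmg\|_\group$, and since collapsing an equivariant forest does not increase the $\group$-norm, $\|\Gmg^*\|_\group \geq \|\Gmg\|_\group$, contradicting $\Gmg^* <_\group \Gmg$.

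The main obstacle will be the bookkeeping required when different orbits $\mathcal E_j, \mathcal E_k$ contain ideal edges that are nested or linked as subsets of the oriented edges of $\Gmg$. This is the same combinatorial subtlety addressed in \cite{kv:equivariant}, and it is handled there by processing orbits in a refinement-compatible order so that the submodular inequality can be applied iteratively. Once this order is fixed, the argument proceeds exactly as in the reference, with Lemma~\ref{l:11.2} substituting for its $\cC$-analog throughout.
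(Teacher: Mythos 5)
Your final inference does not go through, and the route you take is also not the one the paper relies on; both issues are worth spelling out.

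On the route: the paper does not reprove this lemma. It observes that the $\cC$-version is \cite[Lemma~6.5]{kv:equivariant} and that the proof there carries over to the $\sgs$ setting verbatim. That proof is given in \cite[Section~6]{kv:equivariant}, \emph{before} the combinatorial Lemmas~7.1 and~7.2 of that paper, and hence it does not use any submodular inequality. The same is true non-equivariantly: Lemma~\ref{l:factorization} is \cite[Corollary~7.2]{kv:contractible}, established in Sections~6--8, well before \cite[Lemma~11.2]{kv:contractible}. The submodular inequalities enter only in the proof that $\st(\Gmg)\cap L_{<_\group}(\Gmg)$ is \emph{contractible} (Proposition~\ref{kv:prop equivariant}), not in the proof that it contains a Whitehead-move neighbor. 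So invoking Lemma~\ref{l:11.2} here is a genuine departure.

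On the gap: the contradiction you derive is vacuous. The inequality $\|\mg\|_\group \geq \|\Gmg\|_\group$ holds automatically, because $\mg$ is a blowup of $\Gmg$ and the identity $\|\mg\|_K = \|\Gmg\|_K + \sum_i |e_i|_{\mg,K}$ (from Section~\ref{s:Stallings}, summed over $\group$) expresses $\|\mg\|_{\group,K}$ as $\|\Gmg\|_{\group,K}$ plus a nonnegative quantity. Neither submodularity nor your hypothesis that every $\Gmg_j \geq_\group \Gmg$ is doing any work in reaching that conclusion. The decisive error is the next step: from $\|\mg\|_\group \geq \|\Gmg\|_\group$ and ``collapsing does not increase the norm,'' i.e.\ $\|\Gmg^*\|_\group \leq \|\mg\|_\group$, you conclude $\|\Gmg^*\|_\group \geq \|\Gmg\|_\group$. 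This is a non sequitur: both $\|\Gmg\|_\group$ and $\|\Gmg^*\|_\group$ are dominated by $\|\mg\|_\group$, and no comparison between them follows. To close the argument you would need a direct lower bound on $\|\Gmg^*\|_\group$ in terms of $\|\Gmg\|_\group$, obtained by analyzing how the maximal $\group$-forest collapsed from $\mg$ to produce $\Gmg^*$ interacts with the ideal forest blown up from $\Gmg$ --- exactly the bookkeeping you defer to ``the reference'' without supplying. That is precisely what the factorization machinery of \cite[Section~6]{kv:equivariant} does, and it is what the paper is appealing to.
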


We denote by $\sigma_{min}$ the minimal element of $\cR_\group$ with respect to  $<_\group$ and by $\cR_{\group,\vec K}$ the subset of $\cR_{\group}$ of $\sigma$ such that $\|\sigma\|_{\group,K_i}=\|\sigma_{min}\|_{\group, K_i}$ for $1\le i\le N$. By $L_{\group,\vec K}$ denote the union of the stars (in $L_\group$) of the elements of $\cR_{\group,\vec K}$. 
This notation assumes that we have already specified an element $\alpha\in\Hom(\group,\Out(\f)\big)$ and $\vec K\in X_\sgs$. If we want to stress the dependence on $\alpha$ and $\vec K$ then we will write $\cR_{(\alpha,\vec K)}$ for $\cR_{\group,\vec K}$ and $L_{(\alpha,\vec K)}$ for $L_{\group,\vec K}$.

As we did with Theorem~\ref{T1} (see Section~\ref{s:applying gfp}), to prove Theorem~\ref{T2} we will apply Proposition~\ref{p:gfp} -- this time:
\begin{itemize}
\item
$X:=\Hom\big(\group,\Out(\f)\big)\times X_\sgs$;
\item
$G:=\Out(\f)$;
\item
$L$ is again the subcomplex (still denoted $L$) of the spine of Outer space consisting of marked graphs with no separating edges; and
\item
for $x=(\alpha,\vec K)\in X$, $L_x:=L_{(\alpha,\vec K)}$. 
\end{itemize}

\noindent With only the obvious modifications, which we leave to the reader, the arguments of Section~\ref{s:applying gfp} reduce the proof of Theorem~\ref{T2} to the  following analog of Proposition~\ref{p:main gersten}. (To verify Definition~\ref{defn:gfp}\pref{item:properties of X}, we view an element of $\Hom\big(\group,\Out(\f)\big)$ as an action $\group\to \Aut(\Gamma)$ for some marked graph $\Gamma$, which is possible by \cite{mc:finite, bz:uber}.)

\begin{proposition}    \label{kv:prop equivariant}$\st(\rho) \cap L_{\group,< \rho}$ is either empty or contractible.   
\end{proposition}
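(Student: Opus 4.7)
The plan is to follow the proof template from \cite[Sections~6--10]{kv:equivariant}, making precisely the modifications needed to accommodate the switch from conjugacy classes $\cC$ to finitely generated subgroups $\sgs$. As in Section~\ref{s:case sgs}, the key point is that most of the machinery in \cite{kv:equivariant} is purely combinatorial on the level of the spine and depends on the length/star-graph data only through a small number of identities and inequalities. I would first verify that the relevant statements in Sections~6--8 of \cite{kv:equivariant}, which build the equivariant analog of the star-graph formalism, apply verbatim after substituting the volume function $\|\Gmg\|_K$ for the length function, the $\group$-star graph (as defined just before the proposition) for its $\cC$-version, and $\cR_{\group,<_\group}$ well-ordered via Proposition~\ref{p:well order equivariant} in place of its $\cC$-analog. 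In particular, Lemma~\ref{l:Gfactorization} plays the role of \cite[Lemma~6.5]{kv:equivariant}, so that each candidate down-move is realized by a Whitehead move.

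Next, I would transfer the global argument for contractibility. Given $\Gmg \in \cR_\group$ with $\st(\Gmg) \cap L_{<_\group}(\Gmg) \ne \emptyset$, the structure of the proof in \cite{kv:equivariant} is to exhibit a deformation retraction of $\st(\Gmg) \cap L_{<_\group}(\Gmg)$ onto a subcomplex built from ideal edges that strictly reduce the norm, and then to use a nerve-type / poset-collapse argument showing that this subcomplex is itself contractible. In the equivariant setting one works with $\group$-orbits of ideal edges. This part of the argument is formal and carries over without change once the norm and the star graph have been adjusted, because all that is used is how the norm changes under a Whitehead move (one old edge is replaced by new edges indexed by orbits of pieces separated by the corresponding $\group$-invariant partition) and the behavior of the $\group$-star graph under such moves.

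The technical content that does \emph{not} carry over is the analog of the submodularity/norm-decrease inequalities that the $\cC$-argument uses. In \cite{kv:equivariant} these are \cite[Lemmas~7.1 and 7.2]{kv:equivariant}; in our setting they must be replaced by the (strictly stronger) inequalities for $|A|_{\sgs}$ relative to $\group$-orbits of pieces $Y \in \sfY$ of the Stallings graph $K_\Gmg$. These are precisely the statements given as Lemmas~\ref{l:7.2} and \ref{l:7.1}, to be proved in Section~\ref{s:combinatorial}. Granting those, the proof of the key lemma underlying \cite[Proposition~10.1]{kv:equivariant}, the equivariant analog of \cite[Lemma~10.1]{kv:contractible}, goes through. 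This will be the main obstacle: unlike in Lemma~\ref{l:11.2}, the pieces $Y$ now may have boundary $\partial Y$ of size larger than two, and the presence of the $\group$-action forces one to track how the partition $\{A,A^c\}$ interacts simultaneously with all translates $x\partial Y$, $x \in \group$. The argument is a case analysis on how $\partial Y$ and its $\group$-translates distribute among the four sets $A \cap B$, $A \cap B^c$, $A^c \cap B$, $A^c \cap B^c$, but now summed over orbits, with particular care at pieces whose $\group$-stabilizer is non-trivial.

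Granting Lemmas~\ref{l:7.1} and \ref{l:7.2}, the remainder of the proof is a mechanical transcription of the argument in \cite{kv:equivariant}: one verifies (as in Remark~\ref{rmk:cor still hold}) that the formal consequences — contractibility of initial intervals $L_{<_\group}(\Gmg)$, existence of norm-decreasing Whitehead moves out of any non-minimal $\Gmg$, and finally Proposition~\ref{kv:prop equivariant} itself — follow from Proposition~\ref{p:well order equivariant}, Lemma~\ref{l:Gfactorization}, and the combinatorial input from Section~\ref{s:combinatorial}. I would not repeat the detailed retraction argument; a one-paragraph reduction \textit{mutatis mutandis} to \cite{kv:equivariant}, with pointers to the three substitutions above, should suffice.
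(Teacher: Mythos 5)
Your proposal matches the paper's proof essentially exactly: both reduce the proposition to the $\cC$-version in \cite{kv:equivariant}, observe that the arguments there carry over once the length function, star graph, and well-ordering are replaced by their $\sgs$-analogs, and isolate \cite[Lemmas~7.1 and 7.2]{kv:equivariant} as the sole places requiring new input, replaced by Lemmas~\ref{l:7.1} and \ref{l:7.2} proved in Section~\ref{s:combinatorial}. The additional color you provide about the retraction argument and the case analysis over translates $x\partial Y$ is consistent with what actually happens in Section~\ref{s:combinatorial}.
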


The $\cC$ version of Proposition~\ref{kv:prop equivariant} is proved in \cite[Sections~7 and 8]{kv:equivariant}.  
As in the non-equivariant case, most of the arguments extend to our current context without change.  The two exceptions are Lemmas~7.1 and 7.2 of \cite{kv:equivariant}, which appear as Lemmas~\ref{l:7.2} and \ref{l:7.1} in this paper.  We state and prove these lemmas in Section~\ref{s:last}.   Once this is done, the arguments in \cite[Section~8]{kv:equivariant} carry over to complete the proof of  Proposition~\ref{kv:prop equivariant} and hence the proof of Theorem~\ref{T2}.  

In summary, we have proved Theorem~\ref{T2} modulo verifying Lemmas~\ref{l:7.2} and \ref{l:7.1}.

\section{The Combinatorial Lemmas } \label{s:combinatorial}
In this section we complete the proof of  Theorem~\ref{T2} by proving  Lemmas~\ref{l:7.2} and \ref{l:7.1}, the analogs of \cite[Lemmas~7.2 and 7.1]{kv:equivariant}.

The following notation is used throughout this section.

Fix $\rho \in  \cR_\group$ and let $E(\Gamma)$ be the set of oriented edges in $\Gamma$. The definition of an {\it ideal edge} $\alpha$ of $\Gamma$ in the equivariant setting is given in \cite[Section~5.A]{kv:equivariant}. In particular $\alpha\subset E_v\subset E(\Gamma)$ where $v$ is a vertex of $\Gamma$ and  $E_v$ is the set of oriented edges of $\Gamma$ terminating at  $v$. We repeatedly   use  the following property of an ideal edge. 
\begin{equation}
\label{e:edge}
\tag{edge}
\alpha\cap x\alpha\not=\emptyset, x\in \group \implies x\in\Stab(\alpha)
\end{equation}

\begin{remark}   Following \Krstic\  and Vogtmann, let $P$ be    the subgroup of $\Stab(v)$ generated by the stabilizers of the oriented edges in $\alpha \subset E_v$.   The second condition in their definition of ideal edge is that $\group \cap \alpha = P e$ for all $e \in \alpha$.  From this it follows easily that $P = \Stab(\alpha)$ and that (edge)  is satisfied.
\end{remark}

\def\YY{Z}
 For subsets $A$ and $\YY$ of $E(\Gamma)$, define $|A|^*_\YY $  to be  $1$ if $\{A,A^c\}$ separates\footnote{equivalently $\YY$ meets both $A$ and $A^c$} $\YY$ and $0$ otherwise. The superscript $*$ indicates that we are ignoring the group action.   For any subgroup  $\group' < \group$, define $$|A|_{\YY,\group'} =  \sum_{g \in \group'}|A|_{g\YY}^*$$
 and $$  |A|_\YY =  |A|_{\YY,\group}$$

\begin{remark}  \label{rem:implicit} $|gA|^*_{g\YY} = |A|^*_\YY$ for all $g \in \group$ and all  $A,\YY \subset E(\Gamma)$.  Thus, if $gA= A$ then $|A|^*_{g\YY} = |A|^*_\YY$.   Similarly, $|A|_{\YY,\group'} = |A|_{g\YY,\group'}$ for all $g \in \group'$.  We use these facts repeatedly, usually without comment.   
\end{remark}

\subsection{The First Combinatorial Lemma}  \label{s:first combinatorial} 
The goal of this section is to prove the combinatorial lemma that underlies our  analogue (Lemma~\ref{l:7.1}) of \Krstic-Vogtmann's Lemma~7.1 \cite{kv:equivariant}. 

\begin{notn} \label{notn:intermediate}
We assume throughout this subsection that    $\alpha ,\beta\subset E(\Gamma)$ are ideal edges  whose stabilizers $P = \Stab(\alpha)$ and $Q = \Stab(\beta)$ have indices $p$ and $q$ in $\group$, respectively.  We also assume 
such that: 
 \begin{enumerate}
 \item $\alpha \cap \beta \ne \emptyset$
 \item $P \le Q$
 \item $\alpha\cap g\beta\not=\emptyset, g\in \group\implies g\in Q $ \label{item:simple}
 \end{enumerate}
Let  $\gamma :=  \alpha\cap\beta;  \ \gamma'  := Q \gamma  = Q\alpha \cap \beta; \ A := \alpha - \gamma$; and  $B :=
  \beta \cup Q \alpha = \beta + Q A$. (Disjointness of $\beta$ and $Q A$ as well as the  final equality follows from the fact that $ \beta$ and $\beta^c$ are $Q$-invariant.)  In particular,
 \begin{itemize}
\item
$Q\alpha$, $\beta$,  $B$, $B^c$  are all $Q$-invariant.
 \end{itemize}

\begin{figure}[h!] 
\centering
\includegraphics[width=.25\textwidth]{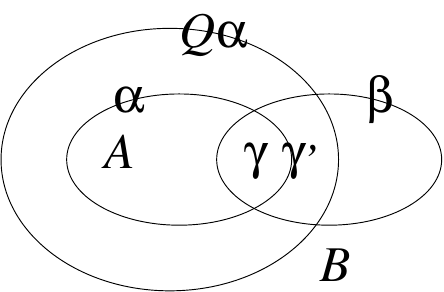}
\caption{}
\label{f:9}
\end{figure}
\end{notn}

The following lemma is the main result of this subsection.  
\begin{lemma} \label{lem:intermediate}  For any $\YY \subset E(\Gamma)$, 
\begin{equation}
\label{eq:intermediate}
p|\gamma|_{\YY,Q}+q|B|_{\YY,Q}\le p|\alpha|_{\YY,Q}+q|\beta|_{\YY,Q}
\end{equation}
 \end{lemma}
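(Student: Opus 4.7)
My plan is to prove \eqref{eq:intermediate} by first reducing it to a pointwise statement in $\YY$ and then establishing the pointwise statement by iterated submodularity.

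For the reduction, I will exploit the $Q$-invariance of $\beta$ and $B$. Since $\beta$ is $Q$-invariant, Remark~\ref{rem:implicit} gives $|\beta|^*_{g\YY} = |\beta|^*_\YY$ for every $g \in Q$, hence $q|\beta|_{\YY,Q} = q|Q|\cdot|\beta|^*_\YY = |\group|\cdot|\beta|^*_\YY$, and likewise $q|B|_{\YY,Q} = |\group|\cdot|B|^*_\YY$. The orbit $Q\alpha = \{\alpha_1,\ldots,\alpha_m\}$ has size $m = [Q:P] = p/q$, and using $|\alpha|^*_{g\YY} = |g^{-1}\alpha|^*_\YY$ from Remark~\ref{rem:implicit} together with the fact that each $\alpha_i$ appears $|P|$ times as $g$ ranges over $Q$, I get $p|\alpha|_{\YY,Q} = p|P|\sum_i|\alpha_i|^*_\YY = |\group|\sum_i |\alpha_i|^*_\YY$.

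For $\gamma$ the analogous computation requires me to verify $\Stab_Q(\gamma) = P$: the inclusion $P \le \Stab_Q(\gamma)$ is immediate, while if $g \in Q$ fixes $\gamma$ then $g\alpha \cap \alpha \supseteq g\gamma = \gamma \ne \emptyset$, so \eqref{e:edge} applied to $\alpha$ gives $g \in P$. Consequently $Q\gamma$ also has size $m$, and writing $\gamma_i = g_i\gamma$ when $\alpha_i = g_i\alpha$, the $Q$-invariance of $\beta$ gives $\gamma_i = \alpha_i \cap \beta$. The same averaging yields $p|\gamma|_{\YY,Q} = |\group|\sum_i|\gamma_i|^*_\YY$. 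Dividing \eqref{eq:intermediate} by $|\group|$ reduces the lemma to the pointwise claim
$$\sum_{i=1}^m |\gamma_i|^*_\YY + |B|^*_\YY \le \sum_{i=1}^m |\alpha_i|^*_\YY + |\beta|^*_\YY.$$

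For this pointwise claim I plan to iterate the submodularity inequality $|S_1\cup S_2|^*_\YY + |S_1\cap S_2|^*_\YY \le |S_1|^*_\YY + |S_2|^*_\YY$, which is the per-piece statement established by the case analysis inside the proof of Lemma~\ref{l:11.2}. Setting $B_0 = \beta$ and $B_i = B_{i-1} \cup \alpha_i$, I have $B_m = B$ and $B_{i-1} \cap \alpha_i = \gamma_i$, using that the $\alpha_j$ are pairwise disjoint: if $\alpha_i \cap \alpha_j \ne \emptyset$ with $\alpha_j = g\alpha_i$ for some $g \in Q$, then \eqref{e:edge} forces $g \in P$ and hence $\alpha_i = \alpha_j$. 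Submodularity gives $|B_i|^*_\YY + |\gamma_i|^*_\YY \le |B_{i-1}|^*_\YY + |\alpha_i|^*_\YY$, and summing $i=1,\ldots,m$ telescopes the $B_i$ terms to the desired inequality. The main obstacle I anticipate is the orbit-stabilizer bookkeeping in the reduction step, particularly the identification $\Stab_Q(\gamma) = P$ that makes the coefficients $p$ and $q$ balance exactly against $|P|$ and $|Q|$ when passing from the $Q$-averaged form to the pointwise form.
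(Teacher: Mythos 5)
Your proposal is correct, and it takes a genuinely different route from the paper's proof. The paper proves Lemma~\ref{lem:intermediate} by a case analysis on how $\YY$ sits relative to the regions of Figure~\ref{f:9} (the cases $\YY\subset B^c$, $\YY$ meeting both $B$ and $B^c$, and $\YY\subset B$), culminating in the chart and Lemma~\ref{elaborate2}. You instead average over $Q$: using $Q$-invariance of $\beta$ and $B$, the orbit--stabilizer count $|P|\,p=|Q|\,q=|\group|$, and the identification $\Stab_Q(\gamma)=P$ (your argument for this is correct; the edge property of $\alpha$ forces it), you collapse \eqref{eq:intermediate} to the single pointwise inequality
\[
\sum_{i=1}^m |\gamma_i|^*_\YY + |B|^*_\YY \;\le\; \sum_{i=1}^m |\alpha_i|^*_\YY + |\beta|^*_\YY,
\]
where $\{\alpha_1,\dots,\alpha_m\}=Q\alpha$ and $\gamma_i=\alpha_i\cap\beta$. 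You then dispose of this by telescoping the per-$\YY$ submodularity $|S_1\cup S_2|^*_\YY+|S_1\cap S_2|^*_\YY\le|S_1|^*_\YY+|S_2|^*_\YY$ (which is exactly the content of the case analysis in the proof of Lemma~\ref{l:11.2}, and is a purely combinatorial fact about partitions of a finite set, so it applies with $E(\Gamma)$ in place of $H$). The needed facts along the way --- pairwise disjointness of the $\alpha_i$ via \eqref{e:edge}, and $B_{i-1}\cap\alpha_i=\gamma_i$ --- all hold. The advantage of your argument is that it is conceptually cleaner: it makes explicit that Lemma~\ref{lem:intermediate} is an orbit-averaged and then telescoped instance of the same submodularity that already underlies Lemma~\ref{l:11.2}, and it avoids the detailed chart and the sub-case bookkeeping of Lemma~\ref{elaborate2}. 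The paper's version is more hands-on and makes the contribution of each region of Figure~\ref{f:9} visible, which is useful for the reader tracing the analogy with \Krstic--Vogtmann, but your route is shorter once the submodularity lemma is in hand.
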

 
 \vspace{.2in}
 
The proof of Lemma~\ref{lem:intermediate} occupies the rest of this subsection.
  We will need the following property of $B$ and $Q$.

\begin{lemma}  
$g\in \group$, $B\cap gB\not=\emptyset\iff g\in Q$. 
\end{lemma}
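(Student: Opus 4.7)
The plan is to handle the two directions separately. The reverse direction is immediate: $B=\beta \cup QA$, and both $\beta$ and $QA$ are $Q$-invariant by construction, so $B$ is $Q$-invariant. Thus for $g\in Q$ we have $gB=B$, and $B\cap gB = B$, which is nonempty since $\alpha\cap\beta\subset B$ is nonempty by hypothesis.

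For the forward direction, I will expand
\begin{equation*}
B\cap gB \;=\; (\beta\cap g\beta)\cup(\beta\cap gQA)\cup(QA\cap g\beta)\cup(QA\cap gQA)
\end{equation*}
and show that if any of the four summands is nonempty then $g\in Q$. If $\beta\cap g\beta\neq\emptyset$, property \eqref{e:edge} applied to the ideal edge $\beta$ (whose stabilizer is $Q$) gives $g\in Q$ directly. If $\beta\cap gQA\neq\emptyset$, pick $q\in Q$ with $\beta\cap gqA\neq\emptyset$; since $A\subset\alpha$, this yields $\alpha\cap(gq)^{-1}\beta\neq\emptyset$, and assumption~(3) of Notation~\ref{notn:intermediate} forces $(gq)^{-1}\in Q$, hence $g\in Q$. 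The case $QA\cap g\beta\neq\emptyset$ is symmetric: translating by an element of $Q$ and using $A\subset\alpha$ reduces to $\alpha\cap h\beta\neq\emptyset$ for some $h\in\group$, and (3) again gives $h\in Q$, hence $g\in Q$.

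The remaining case $QA\cap gQA\neq\emptyset$ is the one that requires the hypothesis $P\le Q$. Choose $q_1,q_2\in Q$ with $q_1A\cap gq_2A\neq\emptyset$; equivalently $A\cap hA\neq\emptyset$ where $h=q_1^{-1}gq_2$. Since $A\subset\alpha$ this gives $\alpha\cap h\alpha\neq\emptyset$, so property \eqref{e:edge} for $\alpha$ yields $h\in P$. Because $P\le Q$, we conclude $q_1^{-1}gq_2\in Q$, hence $g\in Q$.

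The only substantive step is the $QA\cap gQA$ case, which is where the inclusion $P\le Q$ (assumption~(2)) is used; the other three cases are direct applications of \eqref{e:edge} and assumption~(3). I do not anticipate a real obstacle beyond carefully tracking the translations by elements of $Q$ and using that $A=\alpha-\gamma\subset\alpha$ to promote intersections with $A$ to intersections with $\alpha$ before invoking (3) or \eqref{e:edge}.
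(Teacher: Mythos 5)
Your proof is correct and takes essentially the same approach as the paper: both decompose $B$ into $\beta$ together with the $Q$-translates of $\alpha$ (the paper uses $Q\alpha$, you use the disjoint piece $QA$) and dispatch the cases using (edge) for $\beta$, condition (3) for mixed intersections, and (edge) for $\alpha$ combined with $P\le Q$ for the remaining case. Your bookkeeping via the disjoint union $B=\beta+QA$ is a slightly cleaner organization but the ideas match the paper's exactly.
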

\begin{proof}
$\Leftarrow$: $B$ is  $Q$-invariant.

$\Rightarrow$: Assuming that $y,g(y) \in B$, we must show that $g \in Q$.  Case 1: Assume $y\in \beta$. If $gy\in \beta$ then $g\in Q$ by  the \pref{e:edge} property of $\beta$. If $gy\in Q\alpha$ then there is $x\in Q$ such that $x^{-1}gy\in\alpha$ and \ref{notn:intermediate}\pref{item:simple} implies that  $x^{-1}g\in Q$ and so $g\in Q$. Case 2: Assume $y\in Q\alpha$ and hence  there is $x \in Q$ such that $x^{-1} y \in \alpha$. If $gy\in\beta$ then $x^{-1}g^{-1}(gy)\in\alpha$ and \ref{notn:intermediate}\pref{item:simple} implies that $x^{-1}g^{-1}\in Q$ and so $g\in Q$. If $gy\in Q\alpha$ then there is  $x'\in Q$ such that  $x'^{-1}gx(x^{-1}y)= x'^{-1}gy\in\alpha$.
By the \pref{e:edge} property of $\alpha$,   $x'^{-1}gx\in P$ and so $g\in Q$.
\end{proof}

\begin{notn}\label{n:7.1}
The disjoint regions in Figure~\ref{f:9} are $QA-A $, $A$, $\gamma$, $\gamma'-\gamma$, $\beta-\gamma'$, $B^c$.  Note that all of these regions are $P$-invariant. We use a sequence of 0s and 1s to indicate which regions contain an element of $\YY$. For example, $011000$ denotes that $\YY$ meets $A$ and $\gamma$ and does not meet any other component.
\end{notn}

We now prove Lemma~\ref{lem:intermediate} by a case analysis.

 \begin{itemize}
\item
$\YY\subset B^c$: Then $Q\YY\subset B^c$ and \ref{lem:intermediate}\pref{eq:intermediate} becomes $0\le 0$.
\item
 $\YY$ meets both $B$ and $B^c$:    From $\YY \cap B^c \ne \emptyset$ it follows that $|\gamma|^*_{g\YY}\le|\alpha|^*_{g\YY}$  for all $g \in Q$; in particular, $|\gamma|_{\YY,Q}\le|\alpha|_{\YY,Q}$.  
 If $\YY$ meets $\beta$ then $|B|^*_{g\YY}=|\beta|^*_{g\YY} = 1$ for all $g \in Q$.  Thus $|B|_{\YY,Q}=|\beta|_{\YY,Q} = |Q|$ and \ref{lem:intermediate}\pref{eq:intermediate} is satisfied. 
  We may therefore assume that  $\YY$ misses $\beta$ and hence meets $Q\alpha$.  After replacing $\YY$ by $g\YY$ for some $g \in Q$, we may  assume that $\YY$ meets $\alpha$.  Since $\YY$ misses $\beta$, $g\YY$ misses $\gamma$ for all $g \in Q$ and $|\gamma|_{\YY,Q} = 0$.  Since $g\YY$ meets $\alpha$ for all $g \in P$,  $p|\alpha|_{\YY,P} =p|P| = |\group|$.   \ref{lem:intermediate}\pref{eq:intermediate}  therefore follows from   $q|B|_{\YY,Q} \le q |Q| = |\group|$. 
 
\item
$\YY\subset B$: In this case $|B|_{\YY,Q}=0$. If $|\gamma|_{\YY,Q}=0$ then the left hand side of  \ref{lem:intermediate}\pref{eq:intermediate} is 0 and we are done. Hence we may further assume that there exists $x \in Q$ with $ |\gamma|^*_{x\YY} = 1$.  After replacing $\YY$ by  $x\YY$,   we have  $|\gamma|_\YY = 1$.  Thus, $\YY$ intersects both $\gamma$ and $\gamma^c$.   If $\YY$ intersects $\beta-\gamma'$, which is a $Q$-set  in the complement of $Q\alpha$,  then $g\YY$ intersects $\alpha ^c$ for each $g \in Q$.  In this case, $|\gamma|^*_{g\YY}\le |\alpha|^*_{g\YY}$ for all $g\in Q$ and we are done. So we additionally assume that $\YY$ doesn't meet $\beta-\gamma'$.  Since $\YY \subset B$, we have $\YY\subset Q\alpha$.  To summarize, we are left with proving \ref{lem:intermediate}\pref{eq:intermediate} in the case that   $\YY \subset Q\alpha$ intersects both $\gamma$ and  $\gamma^c$.  Using Notation~\ref{n:7.1}, $\YY$ has the form $\delta_1  \delta_2 1  \delta_400$ with $\max(\delta_1,\delta_2,\delta_4)=1$ where $\delta_i$ denotes a 1 or a 0.
 \end{itemize}
 
 The following chart shows the required case analysis.  Details are given in Lemma~\ref{elaborate2} and the paragraph that follows it.  
  
\vskip 6pt
\centerline{
\begin{tabular}{ |c||c||c|c|c|c|}
 \hline
& form & $p|\gamma|^*$ & $q|\beta\cup Q\alpha|^*$ & $p|\alpha|^*$ &$q|\beta|^*$ \\
 \hline
 \hline
$P\YY$ & 
$\delta_1\delta_21\delta_400$  & $|P|p$  & 0 & $\max(\delta_1,\delta_4)|P|p$ & $\max(\delta_1,\delta_2)|P|q$ \\
 \hline
$(Q-P)\YY$ & $\delta_1'\delta_2'?100$  & $\Delta$ & 0 & $\ge\Delta$ & $\max(\delta_1',\delta_2')|Q-P|q$\\
 \hline
\hline
column totals: $Q\YY$ & & $|\group|+\Delta$   & $0$ & $ \max(\delta_1,\delta_4)|\group|+(\ge\Delta)$ &$\max(\delta_1,\delta_2)|\group|$ \\
 \hline
\end{tabular}
}
\vskip 12pt
\noindent Here $\max(\delta_1,\delta_2)=\max(\delta'_1,\delta'_2)$ and ? denotes an indeterminate 0 or 1 that doesn't factor into the computation. Since $\YY$ intersects $\gamma^c$, $\max(\delta_1,\delta_2,\delta_4)=1$, and so   \ref{lem:intermediate}\pref{eq:intermediate} is satisfied.

\begin{lemma} \label{elaborate2} Suppose that  $\YY \subset Q\alpha$ intersects both $\gamma$ and  $\gamma^c$.  Define $\delta_1$ [resp. $\delta_2, \delta_4$] to be $1$  if $\YY $ intersects $ QA- A$ [resp. $A$, $\gamma'-\gamma$] and $0$ otherwise.  
 \begin{enumerate}[(a)]
 \item  For all $g \in Q$, \  $|B|^*_{g\YY} = 0$.
   \item For all $g \in Q$, \ $|\beta|^*_{g\YY} = 1 \Longleftrightarrow  \YY $ intersects $QA \Longleftrightarrow  \max(\delta_1,\delta_2) = 1$. 
 \item  For all $g \in P$:\ \      $|\gamma|^*_{g\YY} = 1$;  and \     
  $|\alpha|^*_{g\YY} = 1\Longleftrightarrow \YY $ intersects $Q\alpha - \alpha\Longleftrightarrow \max(\delta_1,\delta_4) =1    $
  \item  If $g \in Q - P$ then $|\gamma|^*_{g\YY}\le |\alpha|^*_{g\YY} $
\end{enumerate}
\end{lemma}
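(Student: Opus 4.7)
The plan is a case-by-case analysis based on the partition of $Q\alpha$ pictured in Figure~\ref{f:9}. First I would record the invariance properties that make the analysis clean: $\beta, B, Q\alpha$ and their complements are all $Q$-invariant by \ref{notn:intermediate}; and since $P \le Q$, the sets $\alpha$ and $\beta$, and hence $\gamma = \alpha\cap\beta$, are $P$-invariant. Next I would verify the disjoint decomposition $Q\alpha = (QA-A) \sqcup A \sqcup \gamma \sqcup (\gamma'-\gamma)$, noting that $A = \alpha - \gamma \subset \alpha - \beta \subset \beta^c$ forces $QA \subset \beta^c$ by $Q$-invariance of $\beta^c$, while $\gamma' = Q\alpha\cap\beta \subset \beta$; this yields $\beta^c \cap Q\alpha = QA$ and $\alpha^c \cap Q\alpha = (QA-A) \cup (\gamma'-\gamma)$.

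Parts (a), (b), and (c) then reduce to bookkeeping with these invariances. Part (a) is immediate from $\YY \subset Q\alpha \subset B$ together with $Q$-invariance of $B$: $g\YY \subset B$ for every $g \in Q$. For (b), $Q$-invariance of $\beta$ gives $|\beta|^*_{g\YY} = |\beta|^*_\YY$ for $g \in Q$; the hypothesis $\YY \cap \gamma \ne \emptyset$ forces $\YY$ to meet $\beta$, so $|\beta|^*_\YY = 1$ iff $\YY$ meets $\beta^c \cap Q\alpha = QA$, which is iff $\max(\delta_1,\delta_2) = 1$. For (c), $P$-invariance of $\alpha$ and $\gamma$ lets me drop the prefix $g$; then $|\gamma|^*_\YY = 1$ by hypothesis, while $\YY \cap \alpha \supset \YY \cap \gamma \ne \emptyset$ means $|\alpha|^*_\YY = 1$ iff $\YY$ meets $\alpha^c \cap Q\alpha = (QA-A) \cup (\gamma'-\gamma)$, which is iff $\max(\delta_1,\delta_4) = 1$.

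Part (d) is the only step that uses the ideal-edge property \pref{e:edge}. Given $g \in Q - P$, I may assume $|\gamma|^*_{g\YY} = 1$ (else the inequality is trivial); in particular $g\YY \cap \gamma \ne \emptyset$ gives $g\YY \cap \alpha \ne \emptyset$. It then suffices to show $g\YY$ meets $\alpha^c$. Picking $y_0 \in \YY \cap \gamma$ (available from the overall hypothesis), we have $gy_0 \in g\gamma \subset g\alpha$; if $g\YY$ were contained in $\alpha$ then $gy_0 \in \alpha \cap g\alpha$, which by \pref{e:edge} would force $g \in \Stab(\alpha) = P$, contrary to $g \in Q - P$. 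Hence $|\alpha|^*_{g\YY} = 1 \ge |\gamma|^*_{g\YY}$.

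The main obstacle is purely organizational: making sure the four-part partition of $Q\alpha$ is genuinely disjoint, and invoking $P$- versus $Q$-invariance at precisely the right places so that one can freely replace $g\YY$ by $\YY$ in parts (b) and (c). Once that is set up, everything else is a direct reading off of the diagram in Figure~\ref{f:9}, with \pref{e:edge} entering only in (d).
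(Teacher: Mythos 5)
Your proof is correct and follows the same approach as the paper's: exploit $Q$-invariance of $B$ and $\beta$ to reduce (a),(b) to the case $g=\mathrm{id}$, exploit $P$-invariance of $\alpha$ and $\gamma$ similarly for (c), and use the (edge) property of $\alpha$ for (d). Your write-up is more explicit than the paper's terse treatment -- in particular, making the disjoint decomposition $Q\alpha = (QA-A)\sqcup A\sqcup\gamma\sqcup(\gamma'-\gamma)$ precise via $\beta^c\cap Q\alpha = QA$ and $\alpha^c\cap Q\alpha = (QA-A)\cup(\gamma'-\gamma)$, and spelling out the (edge)-property contradiction in (d) -- but this is only a difference in detail, not in method.
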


\proof  Since $B$ and $\beta$ are $Q$-invariant,  $|B|^*_{g\YY}$   and  $|\beta|^*_{g\YY} $ are independent of $g \in Q$ and it suffices to verify (a) and (b) with $g\YY$ replaced by $\YY$, in which case (a) and (b) follow from the hypotheses of the lemma and the definitions.  Item (c) is proved similarly, noting that $\gamma$ and $\alpha$ are $P$-invariant. For $g \in Q- P$,  the (edge) property of $P$ implies that $g\YY \cap \alpha^c \ne \emptyset$ and hence that  $|\gamma|^*_{g\YY}\le |\alpha|^*_{g\YY} $.
\endproof

Applying Lemma~\ref{elaborate2} and $\max(\delta_1,\delta_2,\delta_4) \ge 1$,  

$$p|\alpha|_{\YY,Q}- p|\gamma|_{\YY,Q}+q|\beta|_{\YY,Q}-q|B|_{\YY,Q} $$ 
$$ \ge p|\alpha|_{\YY,P} -p|\gamma|_{\YY,P}+q|\beta|_{\YY,Q} \qquad \text{ using (a) and  (d) }$$
$$ =   p|\alpha|_{\YY,P}+q|\beta|_{\YY,Q} -p|P| \qquad \text{ using (c)} $$
$$ =   p|P| \max(\delta_1,\delta_4)+q|Q| \max(\delta_1,\delta_2) -|\group| \qquad \text{ using (b) and  (c) }$$
$$ =   |\group| \max(\delta_1,\delta_4)+|\group| \max(\delta_1,\delta_2) -|\group| \ge (\max(\delta_1,\delta_2,\delta_4) -1) |\group| \ge0$$
This completes the proof in the case that $\YY \subset B$ and so also   the proof of Lemma~\ref{lem:intermediate}. \qed

\subsection{The Second Combinatorial Lemma}   \label{s:second combinatorial}

The goal of this section is to prove the combinatorial lemma that underlies \cite[Lemma~7.2]{kv:equivariant}.

\begin{lemma}[cf.\ {\cite[Lemma~7.2]{kv:equivariant}}]\label{l:7.2}  Suppose that  $x \in \group$ and that  $\alpha ,\beta\subset E(\Gamma)$ are ideal edges  whose stabilizers $P = \Stab(\alpha)$ and $Q = \Stab(\beta)$ have indices $p$ and $q$ in $\group$, respectively.    Let 
$$\gamma = \alpha \cap Px\beta = P(\alpha \cap x\beta) \qquad \text{ and } \qquad \gamma' = \beta \cap Qx^{-1}\alpha = Q(\beta \cap x^{-1} \alpha)$$  Then   for all $\YY \subset E(\Gamma)$,
 $$ p|\alpha-\gamma|_\YY+q|\beta-\gamma'|_\YY\leq p|\alpha|_\YY+q|\beta|_\YY$$
 \end{lemma}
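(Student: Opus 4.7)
My first step would be to dispense with the trivial case $\alpha \cap x\beta = \emptyset$, in which $\gamma = \emptyset$ and $\beta \cap x^{-1}\alpha = x^{-1}(x\beta\cap\alpha) = \emptyset$ gives $\gamma' = \emptyset$, so the inequality is an equality. Assume then $\alpha \cap x\beta \neq \emptyset$. Next I would reduce to $x = e$: the substitution $g' := x^{-1}g$ in the defining sum yields $|xA|_\YY = \sum_g |A|^*_{x^{-1}g\YY} = |A|_\YY$, so replacing $\beta$ by $\tilde\beta := x\beta$ (with stabilizer $\tilde Q := xQx^{-1}$ of index $q$) transforms the claim into its $x=e$ version with $\tilde\beta$ in place of $\beta$ (noting $P(\alpha\cap\tilde\beta) = \gamma$ and $\tilde Q(\tilde\beta\cap\alpha) = x\gamma'$, together with $|x\gamma'|_\YY = |\gamma'|_\YY$). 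Henceforth assume $x = e$, so $\gamma = P(\alpha\cap\beta)$ and $\gamma' = Q(\alpha\cap\beta)$.

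Since $\alpha, \alpha-\gamma$ are $P$-invariant and $\beta, \beta-\gamma'$ are $Q$-invariant, the same reindexing trick yields $p|A|_\YY = |\group|\cdot N_P(A, \YY)$ for every $P$-invariant $A$, where $N_P(A, \YY) := |\{gP\in\group/P : gA \text{ separates } \YY\}|$, and analogously for $Q$. The inequality becomes
$$N_P(\alpha,\YY) + N_Q(\beta,\YY) \;\geq\; N_P(\alpha-\gamma,\YY) + N_Q(\beta-\gamma',\YY).$$
Setting $\epsilon(gP) := |g\alpha|^*_\YY - |g(\alpha-\gamma)|^*_\YY$ and $\delta(hQ) := |h\beta|^*_\YY - |h(\beta-\gamma')|^*_\YY$ (each in $\{-1,0,+1\}$) and letting $E_\pm, D_\pm$ denote the cosets on which these take the value $\pm 1$, our goal becomes $|E_+| + |D_+| \geq |E_-| + |D_-|$. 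A short case analysis shows $\epsilon(gP) = -1$ iff $\YY \subset g\alpha$ while $\YY$ meets both $g\gamma$ and $g(\alpha-\gamma)$; symmetrically for $\delta$.

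The crucial observation is that $|E_-| \leq 1$ and $|D_-| \leq 1$. If $gP, g'P \in E_-$ then $\YY \subset g\alpha \cap g'\alpha$; as $\YY \neq \emptyset$ (otherwise all terms vanish), this intersection is nonempty, and the \pref{e:edge} property of $\alpha$ applied to $g^{-1}g'$ forces $gP = g'P$. It then suffices to exhibit, whenever $E_-$ is nonempty, an element of $D_+$, and symmetrically for $D_-$ into $E_+$. For $gP \in E_-$, I would use $g\gamma = \bigcup_{h\in P}(g\alpha \cap gh\beta)$ and $\YY \subset g\alpha$ to pick $h \in P$ with $\YY \cap g\alpha \cap gh\beta \neq \emptyset$, and assign $gP \mapsto ghQ$. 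A direct check confirms $\delta(ghQ) = +1$: $\YY \cap gh\beta \neq \emptyset$ by choice; any $z \in \YY \cap g(\alpha-\gamma)$ lies in $g\alpha \setminus g\gamma$, hence not in $gh\beta$ (since $g\alpha \cap gh\beta \subset g\gamma$), so $\YY \not\subset gh\beta$; and any $y \in \YY \cap gh\beta \subset g\alpha \cap gh\beta$ satisfies $(gh)^{-1}y \in h^{-1}\alpha \cap \beta = \alpha \cap \beta \subset \gamma'$, hence $y \in gh\gamma'$. The parallel construction sending $hQ \in D_-$ to $hqP$, with $q \in Q$ chosen so that $\YY \cap h\beta \cap hq\alpha \neq \emptyset$, lands in $E_+$. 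Checking the four possible values of $(|E_-|, |D_-|) \in \{0,1\}^2$ against the matching then yields the inequality.

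\textbf{The main obstacle} is recognizing the $\leq 1$ bound on $|E_-|$ and $|D_-|$: the rigid condition $\YY \subset g\alpha$ forced by $\epsilon(gP) = -1$ is what lets the \pref{e:edge} property of $\alpha$ pin down $gP$ uniquely. Once this is isolated, the explicit matching into $D_+$ (and its mirror into $E_+$) produces the needed $+1$ contributions and bypasses the intricate case-analysis of Lemma~\ref{lem:intermediate}.
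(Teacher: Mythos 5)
Your proof is correct and takes a genuinely different route from the paper's. Both begin with the same two reductions (translation invariance $|xA|_\YY = |A|_\YY$ to pass from $B$ to $xB$, then picking a representative translate of $\YY$ inside $\alpha$), but from there they diverge. The paper, after fixing a "bad" $g$ WLOG, tabulates the contributions of $p|A|^*$, $q|xB|^*$, $p|\alpha|^*$, $q|\beta|^*$ in a four-row chart indexed by the coset types $P\cap Q^x$, $P-Q^x$, $Q^x-P$, $\group-(P\cup Q^x)$, and verifies the inequality by summing columns (this is Lemma~\ref{elaborate}). You instead normalize the inequality to $|E_+|+|D_+| \ge |E_-|+|D_-|$, observe via the \pref{e:edge} property that $|E_-|\le 1$ and $|D_-|\le 1$ --- the rigidity $\YY\subset g\alpha$ forced by $\epsilon(gP)=-1$ pins down $gP$ --- and then construct explicit injections $E_-\hookrightarrow D_+$ (via $gP\mapsto ghQ$ with $h\in P$ chosen so $\YY\cap g\alpha\cap gh\beta\ne\emptyset$) and symmetrically $D_-\hookrightarrow E_+$. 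This replaces the chart-based bookkeeping with a local matching argument and handles the two "sides" $\alpha,\beta$ symmetrically without having to reduce WLOG to a single bad translate. Both proofs lean on the same structural facts ($P$-invariance of $\alpha,\gamma$ and the (edge) property), but the organizing observation --- that the deficiency count on each side is at most one --- is yours and is not isolated in the paper. It arguably clarifies why the inequality holds, and it sidesteps the coset-by-coset case analysis entirely.
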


\begin{figure}[h!] 
\centering
\includegraphics[width=.27\textwidth]{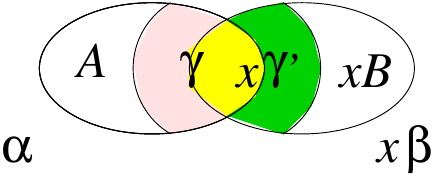}
\caption{The left ellipse represents $\alpha$ and the right $x\beta$. The union of the pink and yellow regions represents $\gamma$ and the union of the yellow and green $x\gamma'$. $A=\alpha-\gamma$ and $xB=x\beta-x\gamma'$. }
\label{f:10}
\end{figure}

\proof  Letting $A:= \alpha-\gamma$ and $B:= \beta-\gamma'$, our desired inequality is   
\begin{equation} \label{e:7.2}
 p|A|_{\YY}+q|xB|_{\YY}\leq p|\alpha|_{\YY}+q|\beta|_{\YY}
\end{equation}
The proof of \pref{e:7.2} occupies the rest of this subsection.
  We start with some observations on the action of $\group$ on the sets in Figure~\ref{f:10}.

\begin{itemize}
\item  $x\gamma' = xQ(\beta \cap x^{-1}\alpha) =  Q^x(x\beta \cap \alpha)$ where $Q^x = xQx^{-1} = \Stab(x \beta)$.
\item By construction,
the six sets $A, \gamma - (\alpha\cap x\beta), \alpha \cap x \beta,  x\gamma' - (\alpha\cap x\beta), xB =x\beta -x\gamma'$ and $(\alpha \cup x \beta)^c$ shown in  Figure~\ref{f:10} are disjoint. 
\item
By definition, $\alpha$, and so also $\alpha^c$, is $P$-invariant; $\gamma$, hence also $A$, is $P$-invariant. Similarly, $x\beta$, $(x\beta)^c$, $x\gamma'$, and $xB$ are $Q^x$-invariant.
\item
$\group (\gamma\cup x\gamma')$ is disjoint from  $A$ and $xB$ (which are disjoint from each other).
 Indeed,
\begin{itemize}
\item
$\group\gamma$, $A$, and $xB$ are pairwise disjoint:
\begin{itemize}
\item
$g\gamma\cap A\not=\emptyset\implies g\in P$ (by the \pref{e:edge} property of $\alpha) \implies g\gamma=\gamma\implies\gamma\cap A\not=\emptyset$, contradiction.
\item
$g\gamma\cap xB\not=\emptyset \implies gP(\alpha \cap x \beta) \cap x B \ne \emptyset \implies\exists g'\in gP \subset \group, b\in\alpha\cap x\beta$ such that $g'b\in xB$. Thus $g'\in Q^x$ (by the \pref{e:edge} property of $x\beta$) and so $g'b\in x\gamma'$. This gives that $x(\gamma'\cap B)\not=\emptyset$, which is a contradiction.
\end{itemize}
\item
Symmetrically $\group\gamma'=\group x\gamma'$, $A$, and $xB$ are pairwise disjoint. 
\end{itemize}
\end{itemize}

We now use these observations to verify \pref{e:7.2}. 

If 
\begin{equation}\label{e:Y} 
 p|A|^*_{g\YY}+q|xB|^*_{g\YY} \le p|\alpha|^*_{g\YY}+q|x\beta|^*_{g\YY}
 \end{equation} 
 holds for all $g \in \group$ then  we are done, so assume that \pref{e:Y} fails 
    for some $g \in \group$.  Thus, either $|A|^*_{g\YY} = |\alpha-\gamma|^*_{g\YY}> |\alpha|^*_{g\YY}$ or  $|xB|^*_{g
    \YY} = |x \beta-x \gamma'|^*_{g\YY}> |x\beta|^*_{g\YY}$.   By replacing $\YY$ with $g\YY$ and reversing the roles of $\alpha$ and $\beta$ if necessary, we may assume that 
$|A|^*_{\YY} = |\alpha-\gamma|^*_{\YY}> |\alpha|^*_{\YY}$.   It follows that  $\YY\subset\alpha$ and that  $\YY$ meets  both $A$ and $\gamma = P(\alpha \cap x\beta)$.   By replacing $\YY$ with $g\YY$ for some $g\in P$,  we may further assume $\YY\subset\alpha$ and $\YY$ meets both $A$ and $\alpha\cap x\beta$ and so also intersects $x\beta$. 

\begin{notn}\label{n:7.2}
The disjoint regions in Figure~\ref{f:10} are $A$, $\gamma-x\beta$, $\alpha\cap x\beta$, $x\gamma'-\alpha$, $xB$, and $(\alpha\cup x\beta)^c$. We use a sequence of 0s and 1s to indicate which regions contain an element of $\YY$. For example, $101000$ denotes that $\YY$ meets $A$ and $\alpha\cap x \beta$ and does not meet any other component. 
\end{notn}

With this notation, we are reduced to proving  \pref{e:7.2} assuming  that $\YY$ has the form 101000 or 111000. The cases are summarized in  the following chart, where $\Delta$ and $\Delta'$ represent nonnegative quantities.  We elaborate in Lemma~\ref{elaborate}.
\vskip 12pt
\centerline{
\begin{tabular}{ |c||c||c|c|c|c|}
 \hline
& form & $p|A|^*$ & $q|B|^*$ & $p|\alpha|^*$ &$q|\beta|^*$ \\
 \hline
 \hline
$(P\cap Q^x)\YY$ & 101000 or 111000 & $|P\cap Q^x|p$  & 0 & 0 & $|P\cap Q^x|q$ \\
 \hline
$(P- Q^x)\YY$ &110000 or 111000 & $|P-Q^x|p$ & 0& 0 & $\Delta$\\
 \hline
 $(Q^x- P)\YY$  & 000101 & 0 & 0 & 0 & $|Q^x-P|q$ \\
\hline
$\big(\group-(P\cup Q^x)\big)\YY$ & in $\alpha^c$ and meets $(\alpha\cup x\beta)^c$ & 0  & $\le \Delta'$ & 0 & $\Delta'$\\
\hline
\hline
column totals: $\group \YY$& & $|P|p=|\group|$   & $\Delta'$ & 0 &$|\group|+\Delta+\Delta'$ \\
 \hline
\end{tabular}
}

\vskip 12pt

\begin{lemma} \label{elaborate} Suppose that  $\YY \subset \alpha$ intersects both $A$ and  $x\beta$.  
\begin{enumerate}[(a)]
\item  If $g \in (P\cap Q^x$) then $|A|^*_{g\YY} = |x \beta|^*_{g\YY} = 1$  and   $|\alpha|^*_{g\YY} = |xB|^*_{g\YY} = 0$.
\item If $g \in (P  \cap (Q^x)^c$) then $|A|^*_{g\YY} = 1$ and $ |x B|^*_{g\YY}  = |\alpha|^*_{g\YY} = 0$.
\item If $g \in (Q^x \cap P^c$) then $|A|^*_{g\YY} =  |x B|^*_{g\YY}  = |\alpha|^*_{g\YY} = 0$ and $ |x \beta|^*_{g\YY} = 1$.
\item If $g \in \group -(P\cup Q^x )$ then $|A|^*_{g\YY}   = |\alpha|^*_{g\YY} = 0$ and $ |x B|^*_{g\YY}  \le |x \beta|_{g\YY}$.
\end{enumerate}
\end{lemma}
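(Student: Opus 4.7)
The plan is to verify the four cases by tracking $g\YY$ through the regions of Figure~\ref{f:10}, using the invariance properties of those regions under $P$ and $Q^x$ together with the two standing hypotheses on $\YY$. As preliminaries I would record two disjointness facts: (1) $A \subset (x\beta)^c$, since $A = \alpha - \gamma$ and $\alpha \cap x\beta \subset \gamma$; and (2) $xB \subset \alpha^c$, since any element of $\alpha \cap x\beta$ lies in $x\gamma' = Q^x(\alpha \cap x\beta)$ while $xB = x\beta - x\gamma'$. Together with the hypotheses, (1) and (2) also show that $\YY$ itself straddles $A$ (via $\YY \cap A$ and $\YY \cap (\alpha\cap x\beta) \subset \gamma \subset A^c$) and straddles $x\beta$ (via $\YY \cap (\alpha\cap x\beta) \subset x\beta$ and $\YY \cap A \subset (x\beta)^c$).

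For cases (a) and (b), I would use that $g \in P$ stabilizes both $\alpha$ and $A$; hence $g\YY \subset \alpha$, which gives $|\alpha|^*_{g\YY} = 0$ directly and $|xB|^*_{g\YY} = 0$ via fact (2), while $|A|^*_{g\YY} = 1$ follows from $P$-invariance of $A$ and $A^c$. The extra claim $|x\beta|^*_{g\YY} = 1$ in (a) uses the additional hypothesis $g \in Q^x$, which makes $x\beta$ also $g$-invariant so that straddling of $x\beta$ is transported from $\YY$ to $g\YY$.

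Case (c) runs dually. From $g \notin P$ the \pref{e:edge} property forces $g\alpha \cap \alpha = \emptyset$, so $g\YY \subset \alpha^c$, killing both $|\alpha|^*_{g\YY}$ and $|A|^*_{g\YY}$. The $Q^x$-invariance of $xB$ combined with fact (2) shows $g\YY \cap xB = \emptyset$, hence $|xB|^*_{g\YY} = 0$; and $g$ carries $\YY \cap (\alpha \cap x\beta)$ into $x\beta$ while carrying $\YY \cap A$ into $gA \subset (x\beta)^c$ (using $Q^x$-invariance of $(x\beta)^c$ together with (1)), yielding $|x\beta|^*_{g\YY} = 1$.

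I expect case (d) to be the main obstacle. The vanishings $|\alpha|^*_{g\YY} = |A|^*_{g\YY} = 0$ come as in (c) from $g \notin P$. For the inequality $|xB|^*_{g\YY} \le |x\beta|^*_{g\YY}$ I would assume $|xB|^*_{g\YY} = 1$ and produce a point of $g\YY \cap (x\beta)^c$ (meeting $x\beta$ is automatic since $xB \subset x\beta$). Decomposing $(xB)^c = (x\beta)^c \cup x\gamma'$, either $g\YY$ meets $(x\beta)^c$ directly and I am done, or $g\YY$ meets $x\gamma' = Q^x(\alpha \cap x\beta)$; in the latter subcase there exist $y \in \YY \subset \alpha$ and $q \in Q^x$ with $q^{-1}gy \in \alpha \cap x\beta \subset \alpha$, and the \pref{e:edge} property of $\alpha$ then forces $q^{-1}g \in P$, yielding a factorization $g = qp$ with $q \in Q^x$ and $p \in P$. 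I would then push a point of $\YY \cap A$ forward by $g = qp$, landing in $qpA = qA \subset q(x\beta)^c = (x\beta)^c$. The delicate step is precisely this last subcase: unlike in Lemma~\ref{lem:intermediate}, the hypotheses here do not rule out $g \in Q^xP \setminus (P \cup Q^x)$, so the argument must explicitly exploit the $(q,p)$-factorization extracted from \pref{e:edge} to recover control.
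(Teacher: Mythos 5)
Your proposal is correct, and for parts (a)--(c) it tracks the paper's proof closely: both proofs fix the preliminary disjointness facts ($A\cap x\beta=\emptyset$, $xB\cap\alpha=\emptyset$), establish that $\YY$ straddles $A$ and $x\beta$ while being disjoint from $\alpha^c$ and $xB$, and then transport these facts to $g\YY$ via $P$- or $Q^x$-invariance of the relevant regions (plus the (edge) property of $\alpha$ for $g\notin P$). For part (d), however, you take a genuinely different, and somewhat more elaborate, route. The paper argues directly: since $\YY$ meets $\alpha\cap x\beta$, the set $g\YY$ meets $g(\alpha\cap x\beta)$, and the (edge) property of $x\beta$ (applied to $g\notin Q^x$) forces $g(\alpha\cap x\beta)$ to lie in $(x\beta)^c$, so $g\YY$ meets $(x\beta)^c$ unconditionally, from which $|xB|^*_{g\YY}\le|x\beta|^*_{g\YY}$ is immediate. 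You instead assume $|xB|^*_{g\YY}=1$, decompose $(xB)^c=(x\beta)^c\cup x\gamma'$, and in the $x\gamma'$ subcase extract a factorization $g=qp$ ($q\in Q^x$, $p\in P$) from the (edge) property of $\alpha$, then push a point of $\YY\cap A$ forward to land in $(x\beta)^c$. Both arguments are sound; the paper's is shorter because it applies (edge) to $x\beta$ on the subset $\alpha\cap x\beta\subset x\beta$ rather than to $\alpha$, sidestepping the case split and the factorization. Your version does have the virtue of making explicit that the subtlety lies in $g\in Q^xP\setminus(P\cup Q^x)$, which the paper's compressed statement leaves implicit, but you pay for that clarity with extra machinery.
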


\proof   We begin with a pair of preliminary observations:   $|A|^*_{\YY}  =  |x \beta|^*_{\YY}  =1$ because $\YY$ intersects both $A$ and $x\beta$, which are disjoint;        $|\alpha|^*_{\YY} = |xB|^*_{\YY} = 0$ because  $\YY \subset \alpha$ and $\YY \cap xB = \emptyset$.

If $g \in P\cap Q^x$ then  $A, xB, \alpha$ and $x\beta$ are all $g$-invariant.  Item (a) therefore follows from our preliminary observations and Remark~\ref{rem:implicit}.

If $g \in P \cap (Q^x)^c$ then  $|A|^*_{g\YY} = |A|^*_{\YY}  =1$  and  $|\alpha|^*_{g\YY} =|\alpha|^*_{\YY} = 0$ because $\alpha$ and $A$ are $g$-invariant.  From $xB \cap \alpha = \emptyset$ and $g^{-1}(\alpha) = \alpha$,  it follows that $g^{-1}(xB) \cap \alpha = \emptyset$ and so $|xB|^*_{g\YY} =  |g^{-1}xB|^*_{\YY} = 0$. This completes the proof of (b).

If $g \in Q^x \cap P^c$ then $|xB|^*_{g\YY} =|xB|^*_{\YY} = 0$  and  $|x\beta|^*_{g\YY} = |x\beta|^*_{\YY} =1$ because $xB$ and $x\beta$ are $g$-invariant.  Property (edge) implies that $g \alpha$ is disjoint from $\alpha$ and hence that $g\YY$ is disjoint from $\alpha$ and $A$.  It follows that   $|A|^*_{g\YY} =   |\alpha|^*_{g\YY} = 0$ completing the proof of (c).

If $g  \in \group - (P\cup Q^x) $ then $|A|^*_{g\YY} =   |\alpha|^*_{g\YY} = 0$ as in  case (c).  Moreover, $g\YY $ intersects $g(\alpha \cap x \beta)$  and so intersects the complement of $x \beta$.  It follows    that $ |x B|_{g\YY}  \le  |x \beta|_{g\YY}$.
\endproof

Equation  \pref{e:7.2}, and hence Lemma~\ref{l:7.2},  now follows from the following two consequences of Lemma~\ref{elaborate}.
 
$$p |A|_\YY  = p|A|_{\YY,P} = p|P| =  |\group| $$
$$ q |x \beta|_\YY - q |xB|_\YY \ge q\sum_{g \in Q^x}  ( |x \beta|^*_{g\YY} -  |xB|^*_{g\YY})  = q |Q| \ = \ |\group| $$

\subsection{ \cite[Lemmas~7.1 and 7.2]{kv:equivariant}} \label{s:last}

Fix two ideal edges $\alpha$ and $\beta$ of $R$. Let $P, Q<\group$ denote the respective stabilizers of $\alpha$ and $\beta$. The indices of $P$ and $Q$ in $\group$ are respectively $p$ and $q$. Choose double coset representatives $x_1=1, x_2,\dots,x_k$ for $P\backslash\group\slash Q$, i.e.\ $$\group=PQ+Px_2Q+\dots+Px_kQ$$ where $+$ denotes disjoint union. The intersections $\gamma:=\alpha\cap\group\beta$ and $\gamma':=\beta\cap\group\alpha$ decompose as disjoint unions
\begin{align*}
\gamma&=\gamma_1+\dots+\gamma_k\\
\gamma'&=\gamma_1'+\dots+\gamma'_k
\end{align*}
where $\gamma_i:=\alpha\cap Px_i\beta$ and $\gamma':=\beta\cap Qx_i^{-1}\alpha$.

 We assume that at least one $\gamma_i$ is non-empty; i.e. $G\alpha$ and $G\beta$ {\it cross} in the notation of \cite{kv:equivariant}.  After replacing  $\beta$  by some translate $x\beta$, $x \in \group$,  we  assume that  $\gamma_1 = \alpha\cap\beta\not=\emptyset$. If $\gamma_i = \emptyset$ for all $i \ne 1$ then $\group\alpha$ and $\group_
 \beta$ {\it cross simply}.   Equivalently Notation~\ref{notn:intermediate}\pref{item:simple} is satisfied.
 
Recall  that we have enumerated the elements $K_1, K_2, \ldots$ of $\sgs$.  Let   ${\sfY_i}  $ be the  set of pieces for $K_i$ (see Section~\ref{s:Stallings}) and for   each $A \subset E(\Gamma)$, define 
$$|A|^*_{K_i}  := \sum_{Y \in \sfY_i} |A|^*_{\partial Y} \qquad     |A|_{K_i}:=\sum_{x\in \group} |A|^*_{xK_i}\qquad |A| :=(|A|_{K_1},\dots)\in\Z^\sgs$$

We can now state our versions of \cite[Lemmas~7.1 and 7.2]{kv:equivariant}.  They follow immediately from the definitions and Lemmas~\ref{lem:intermediate} and \ref{l:7.2}.

\begin{lemma}[cf.\ {\cite[Lemma~7.1]{kv:equivariant}}]\label{l:7.1}
If $\group\alpha$ and $\group\beta$ cross simply, and $P\le Q$, then 
$$p|\alpha\cap \beta|+q|\beta\cup Q\alpha|\le p|\alpha|+q|\beta|$$
\end{lemma}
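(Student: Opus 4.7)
The plan is to observe that Lemma~\ref{l:7.1} is a direct consequence of Lemma~\ref{lem:intermediate}, once one sums the latter inequality over a set of $\group/Q$ coset representatives (to convert the partial sums $|\cdot|_{\YY,Q}$ into the full sums $|\cdot|_{K_i}$) and then assembles the resulting inequalities into one vector inequality in $\Z^\sgs$.

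First I would check that the hypotheses of Notation~\ref{notn:intermediate} are all in force. The normalization preceding the statement (replacing $\beta$ by a translate so that $\gamma_1 = \alpha\cap\beta \ne \emptyset$) supplies (1), and the standing hypothesis of the lemma supplies (2), namely $P \le Q$. For condition (3), the simple crossing hypothesis says $\gamma_i = \alpha \cap Px_i\beta = \emptyset$ for $i \ne 1$, or equivalently $\alpha \cap g\beta \ne \emptyset \implies g \in PQ$. Since $P \le Q$ we have $PQ = Q$, which is exactly condition \ref{notn:intermediate}\pref{item:simple}. Under these hypotheses $\gamma = \alpha \cap \group\beta = \alpha \cap \beta$ and $B = \beta \cup Q\alpha$, and Lemma~\ref{lem:intermediate} gives
\[
p|\gamma|_{\YY,Q}+q|B|_{\YY,Q}\le p|\alpha|_{\YY,Q}+q|\beta|_{\YY,Q}
\]
for every $\YY \subset E(\Gamma)$.

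Next I would promote this scalar inequality to the vector inequality in $\Z^\sgs$ that the lemma asserts. Fix any $K_i \in \sgs$ and pick coset representatives $y_1,\dots,y_r$ for $\group/Q$; then for any $A \subset E(\Gamma)$
\[
\sum_{j=1}^r |A|_{y_j K_i,Q} \;=\; \sum_{j=1}^r \sum_{g \in Q} |A|^*_{g y_j K_i} \;=\; \sum_{x\in\group}|A|^*_{xK_i} \;=\; |A|_{K_i}.
\]
Applying the inequality above to each $\YY = y_j K_i$ and summing over $j$ gives the componentwise inequality
\[
p|\gamma|_{K_i}+q|B|_{K_i}\le p|\alpha|_{K_i}+q|\beta|_{K_i},
\]
and as $i$ varies these assemble into the claimed inequality $p|\alpha\cap\beta|+q|\beta\cup Q\alpha|\le p|\alpha|+q|\beta|$ in $\Z^\sgs$.

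The genuinely combinatorial work has already been carried out in Lemma~\ref{lem:intermediate} via the case analysis on how $\YY$ meets the disjoint regions of Figure~\ref{f:9}; the only obstacles at this stage are notational bookkeeping, namely (i)~verifying that the simple-crossing condition combined with $P\le Q$ gives precisely the hypothesis (3) of Notation~\ref{notn:intermediate} (using $PQ=Q$), and (ii)~partitioning the sum defining $|A|_{K_i}$ into $Q$-orbits so that each $\group$-sum decomposes as $r$ applications of Lemma~\ref{lem:intermediate}. Neither step involves further case analysis, so no real difficulty is expected beyond that which has already been absorbed into Lemma~\ref{lem:intermediate}.
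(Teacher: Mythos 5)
Your proof is correct and takes the same route as the paper, which states that Lemma~\ref{l:7.1} follows immediately from the definitions and Lemma~\ref{lem:intermediate}; you have simply made the deduction explicit, including the key checks that simple crossing plus $P \le Q$ yield Notation~\ref{notn:intermediate}\pref{item:simple} (via $PQ = Q$) and that summing $|\cdot|_{\YY,Q}$ over coset representatives of $Q$ in $\group$ recovers $|\cdot|_{K_i}$. (Two minor notational points: since $|\cdot|_{\YY,Q} = \sum_{g\in Q}|\cdot|^*_{g\YY}$, the sum recovers $\group$ when $y_1,\dots,y_r$ are \emph{right} coset representatives, i.e.\ for $Q\backslash\group$; and strictly speaking one should apply Lemma~\ref{lem:intermediate} with $\YY = y_j\partial Y$ for each piece $Y\in\sfY_i$ and then sum over pieces, since $K_i$ itself is not a subset of $E(\Gamma)$, but this is only bookkeeping.)
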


\begin{lemma}[cf.\ {\cite[Lemma~7.2]{kv:equivariant}}]\label{l:7.2}  If $\group \alpha$ and $\group \beta$ cross, then for all $i$,
 $$ p|\alpha-\gamma_i|+q|\beta-\gamma_i'|\leq p|\alpha|+q|\beta|$$
 \end{lemma}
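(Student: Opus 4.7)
The plan is to reduce this to the second combinatorial lemma (the internal Lemma~\ref{l:7.2} proved in Section~\ref{s:second combinatorial}) applied one coordinate at a time. The first step is to observe that the sets appearing here are exactly of the form handled by that lemma: by definition $\gamma_i = \alpha \cap Px_i\beta = P(\alpha \cap x_i\beta)$ and $\gamma_i' = \beta \cap Qx_i^{-1}\alpha = Q(\beta \cap x_i^{-1}\alpha)$, so the hypothesis of the internal Lemma~\ref{l:7.2} is satisfied with $x := x_i$.

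Next I would unpack the definition of the $\sgs$-indexed norm so as to isolate a single coordinate $K_j$. Combining the definitions
$$|A|_{K_j} \;=\; \sum_{x\in\group} |A|^*_{xK_j} \;=\; \sum_{x\in\group}\sum_{Y\in\sfY_j} |A|^*_{x\partial Y}$$
with $|A|_\YY := \sum_{g\in\group}|A|^*_{g\YY}$ from Section~\ref{s:combinatorial} and swapping the order of summation gives the identity
$$|A|_{K_j} \;=\; \sum_{Y\in\sfY_j} |A|_{\partial Y}.$$
Thus the $K_j$-coordinate of the vector $|A|\in\Z^\sgs$ is obtained by summing the group-averaged quantity $|A|_{\partial Y}$ over pieces $Y\in\sfY_j$ of the Stallings graph for $K_j$.

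The third step is to apply the internal Lemma~\ref{l:7.2} with $\YY := \partial Y$ for each piece $Y\in\sfY_j$, which yields
$$p\,|\alpha-\gamma_i|_{\partial Y} + q\,|\beta-\gamma_i'|_{\partial Y} \;\le\; p\,|\alpha|_{\partial Y} + q\,|\beta|_{\partial Y}.$$
Summing over $Y\in\sfY_j$ and using the identity above produces the inequality in the $K_j$-coordinate, and since this holds for every $j$ the inequality is valid as vectors in $\Z^\sgs$, which is what was claimed.

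No step here is difficult once the internal Lemma~\ref{l:7.2} is in hand, which is why the paper says the result ``follows immediately'': the only thing to be careful about is that the bookkeeping actually matches. The potential sticking point is confirming that the piece decomposition satisfies $\sfY_{xK_j}=x\sfY_j$, so that the sum $\sum_{x\in\group}|A|^*_{xK_j}$ really does reorganize as a group-average of $|A|^*_{\partial Y}$ over pieces of the fixed Stallings graph $K_j$. That is an unraveling of definitions from Section~\ref{s:Stallings}, but it is the one place where a careless reader might miss a $\group$-equivariance condition needed to align the two formulations.
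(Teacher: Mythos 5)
Your proposal is correct and is exactly the argument the paper has in mind when it writes that the lemma "follows immediately from the definitions and Lemmas~\ref{lem:intermediate} and \ref{l:7.2}": specialize the internal combinatorial Lemma~\ref{l:7.2} to $x=x_i$, apply it with $\YY=\partial Y$ for each piece $Y$, and sum over the pieces $\sfY_j$ of $K_j$ to recover the $K_j$-coordinate of the $\Z^\sgs$-valued inequality. Your closing remark correctly pinpoints the only bookkeeping one must check, namely that the pieces of $xK_j$ are the $x$-translates of those of $K_j$, which is what lets $\sum_{x\in\group}|A|^*_{xK_j}$ reorganize into $\sum_{Y\in\sfY_j}|A|_{\partial Y}$.
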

 
 \section{Extensions}\label{s:ext}
Let $\vec K=(K_1,K_2,\cdots,K_N)$ be a finite sequence of conjugacy
classes of finitely generated subgroups of $F_n$. Less formally, we
can (and will!) think of $K_i$ as a subgroup of $F_n$ defined up to
conjugation. Let $GMc(\vec K)$ be the associated generalized McCool
group, i.e. the subgroup of $Out(F_n)$ consisting of outer automorphisms
that preserve $K_i$ for all $i$; see Section~\ref{s:compare}. If $H$ is a subgroup of $F_n$ let
$N(H)=\{g\in F_n\mid gHg^{-1}=H\}$ be the normalizer of $H$ in $F_n$
and consider the natural homomorphism $N(H)\to Aut(H)$ that sends
$g\in N(H)$ to conjugation by $g$. When $H$ is noncyclic this
homomorphism is injective and we define
$$Out'(H)=Aut(H)/N(H)$$
When $H$ is cyclic we define $Out'(H)=Out(H)$, which is cyclic of
order 2 when $H$ is nontrivial. When $H$ is its own normalizer (i.e. $N(H)=H$)
then $Out'(H)=Out(H)$. 

There is then a well defined
restriction homomorphism
$$\Phi:GMc(\vec K)\to Out'(K_1)\times Out'(K_2)\times\cdots\times Out'(K_N)$$
Note that the kernel of $\Phi$ is the McCool group $Mc(\vec K)$ (see \cite{gl:mccool} and Section~\ref{s:compare}). In this section we are
interested in the image of $\Phi$, which we denote by $Ext(F_n;\vec
K)$. Thus an element of $Ext(F_n;\vec
K)$ is a tuple $(\theta_1,\cdots,\theta_N)$ of elements of $Out'(K_i)$
that {\it simultaneously extend} to an outer
automorphism of $F_n$. In \cite{gl:random} it was shown that when
$N=1$ and $K_1$ is a ``random'' subgroup of $F_n$ then $Ext(F_n;\vec
K)$ is trivial. The main theorem in this section is:

\begin{thm}\label{ext}
  $Ext(F_n;\vec K)$ is commensurable to a finite product of
  generalized McCool groups. In particular, it is of type $\vf$.
\end{thm}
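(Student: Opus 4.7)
The plan is to realize $Ext(F_n,\vec K)$, up to commensurability, as a direct product of generalized McCool groups, one inside each $Out(K_i)$; the type~$\vf$ claim then follows from Theorem~\ref{T1}\pref{item:T1VFL} applied to each free group $K_i$, combined with the fact that $\vf$ is preserved under finite direct products and under commensurability. The short exact sequence
$$1 \to Mc(\vec K) \to GMc(\vec K) \to Ext(F_n,\vec K) \to 1$$
has both flanks of type $\vf$ (the middle by Theorem~\ref{T1}, the left by the cited Guirardel--Levitt result), but this alone does not suffice: quotients of $\vf$ groups need not be $\vf$, so the product structure is doing real work.

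First I would identify the factors. Any $\theta\in GMc(\vec K)$ preserves each conjugacy class $[K_j]$, so its restriction $\theta_i$ to $K_i$ (well defined in $Out'(K_i)$) must preserve the $K_i$-conjugacy classes of the intersection subgroups $K_i\cap gK_jg^{-1}$ as $g$ ranges over double coset representatives in $K_i\backslash F_n/K_j$ and $j$ ranges over the indices. By Howson's theorem and the finiteness of essentially distinct such intersections (visible in the finite Stallings graph for $K_i$ constructed as in Remark~\ref{r:construct stallings}), these give only finitely many nontrivial $K_i$-conjugacy classes; assemble them into a finite sequence $\vec M_i$ of conjugacy classes of finitely generated subgroups of $K_i$. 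Let $L_i\le Out(K_i)$ be the generalized McCool group fixing $\vec M_i$, and let $L_i'$ be its natural image in $Out'(K_i)$. Then $\Phi$ takes values in $\prod_i L_i'$, giving an inclusion $Ext(F_n,\vec K)\le\prod_i L_i'$.

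Second, I would show that this inclusion makes $Ext(F_n,\vec K)$ a finite-index subgroup of $\prod_i L_i'$. The content is a global-from-local assembly: a compatible tuple $(\theta_1,\ldots,\theta_N)\in\prod_i L_i'$ must, modulo finite ambiguity, lift to some $\theta\in GMc(\vec K)$. I would carry this out via a graph-of-groups decomposition of $F_n$ relative to $\vec K$, in the spirit of Guirardel--Levitt's relative JSJ theory, with vertex groups containing the $K_i$ and edge groups coming from the intersection subgroups identified above. The obstructions to assembly live in a finitely generated twist subgroup whose image modulo $Mc(\vec K)$ is finite, so $\Phi(GMc(\vec K))$ has finite index in $\prod_i L_i'$. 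Combined with step one, and noting that each $L_i$ is $\vf$ by Theorem~\ref{T1}\pref{item:T1VFL} applied in $Out(K_i)\cong Out(F_{k_i})$, this yields Theorem~\ref{ext}.

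The main obstacle is the finite-index assertion in the second step. The inclusion from step one is essentially formal from functoriality and the finiteness of intersection patterns, but quantifying the gluing obstructions requires careful bookkeeping along the relative JSJ decomposition and identification of the twist subgroup modulo $Mc(\vec K)$ as finite. In the special case announced in the introduction, where $\vec K$ consists of a single self-normalizing subgroup $K$, the JSJ structure is particularly transparent (with $K$ itself as the distinguished vertex group and the peripheral subgroups $K\cap gKg^{-1}$ as edge groups), and this should serve as the guiding model for the general argument.
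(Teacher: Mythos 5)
Your step two fails, and the failure is already visible in a case the paper itself cites. Take $N=1$ and $K=K_1$ a ``random'' (in particular malnormal and self-normalizing, non--free-factor) subgroup of $F_n$. Then all the intersections $K\cap gKg^{-1}$ with $g\notin K$ are trivial, so your $\vec M_1$ is empty and your $L_1=GMc(\vec M_1)=Out(K)$, which is infinite. But by \cite{gl:random}, $Ext(F_n,K)$ is trivial for such $K$. So $Ext(F_n,K)$ has \emph{infinite} index in $L_1'$, and your claim that $\Phi(GMc(\vec K))$ is finite index in $\prod_i L_i'$ is false. The intersection-pattern constraints are far too weak to capture ``extendability to $F_n$'': the paper's Lemma~\ref{point} uses Rips theory (Paulin's limiting argument plus the Rips machine) to show that when $F_n$ admits no free or cyclic splitting rel $\vec K$, the \emph{kernel} of $\Phi$ is finite -- but the image can still be a tiny, even trivial, subgroup of $\prod_i Out'(K_i)$, with no finiteness on the cokernel side.

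Relatedly, you are aiming at the wrong target groups. The paper's product is indexed not by the $K_i$ but by the rigid vertex groups $V_j$ of a relative cyclic JSJ decomposition (after first splitting along the smallest free factor system carrying the $K_i$), and the factors are $GMc(V_j,\vec H_j')$, McCool groups inside $Out(V_j)$ where $\vec H_j'$ includes the peripheral edge groups $\partial V_j$. Each such $V_j$ contains the relevant $K_i$'s but is generally strictly larger, and in the base case (no cyclic splitting rel $\vec K$) the lone factor is $GMc(F_n,\vec K)$ sitting in $Out(F_n)$, not anything inside an $Out(K_i)$. Your idea of passing to a relative JSJ and bounding twist obstructions is in the right direction for proving the finite-index part, but the correct statement is about finite-index agreement with a product of $GMc$'s of the $V_j$'s, not with a product of McCool groups in $Out(K_i)$. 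As written, the approach does not prove the theorem.
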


Recall that two groups are commensurable if they contain isomorphic
finite index subgroups. If there is an epimorphism $G\to H$ with
finite kernel, and if $G$ is residually finite, then $G$ and $H$ are
commensurable. Finally, recall that $Out(F_n)$ (and hence any of its
subgroups) is residually finite \cite{grossman}.

We start by considering a special case.

\begin{lemma}\label{point}
  Suppose $F_n$ admits no free splitting or a splitting over a cyclic
  subgroup in which every $K_i$ is elliptic. Then the kernel of $\Phi$
  is finite and thus $Ext(F_n;\vec K)$ is commensurable with $GMc(\vec
  K)$.
\end{lemma}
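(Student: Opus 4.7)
We begin by unpacking $\ker\Phi$. An outer class $[\phi]\in GMc(\vec K)$ lies in $\ker\Phi$ precisely when, after choosing a representative $\phi\in\Aut(\f)$ that preserves each $K_i$ as a subgroup, the restriction $\phi|_{K_i}$ is conjugation by an element of $N(K_i)\subset\f$, for every $i$. The commensurability statement in the lemma follows formally from finiteness of $\ker\Phi$: by residual finiteness of $\ofn$ \cite{grossman} there is a finite-index subgroup $H\le GMc(\vec K)$ with $H\cap\ker\Phi=\{1\}$, and then $H\cong\Phi(H)$ has finite index in $Ext(\f,\vec K)$.

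The real content is therefore to prove $\ker\Phi$ is finite. Suppose for contradiction that some $[\phi]\in\ker\Phi$ has infinite order. Fix a basepoint $x_0$ in Culler-Vogtmann Outer space. Because the $\ofn$-action on Outer space has finite point stabilizers, $d(x_0,\phi^k x_0)\to\infty$ as $|k|\to\infty$. The Bestvina-Paulin construction then produces, after rescaling and passing to a subsequence, a minimal non-trivial isometric action of $\f$ on an $\R$-tree $T$ with trivial tripod stabilizers. The defining property of $[\phi]\in\ker\Phi$ is that for each $k$ and each $i$ the restriction $\phi^k|_{K_i}$ is conjugation by some element of $N(K_i)\subset\f$. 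Hence the translation lengths on $\phi^k\cdot x_0$ of elements of $K_i$ are uniformly bounded in $k$, every element of $K_i$ is elliptic in $T$, and (since $K_i$ is finitely generated) $K_i$ itself fixes a point of $T$.

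Feeding $T$ into the Rips machine in the relative form developed by Guirardel and Guirardel-Levitt resolves $T$ onto a non-trivial simplicial $\f$-tree $T'$ whose edge stabilizers are at most cyclic and in which each $K_i$ remains elliptic. The Bass-Serre splitting of $\f$ afforded by $T'$ is thus either a free splitting or a splitting over a cyclic subgroup in which every $K_i$ is elliptic, contradicting the hypothesis. Consequently $\ker\Phi$ is torsion. Since $\ofn$ contains a torsion-free subgroup $\Gamma$ of finite index by \cite{cv:moduli}, the intersection $\ker\Phi\cap\Gamma$ is both torsion and torsion-free, so trivial, and $\ker\Phi$ injects into the finite coset space $\ofn/\Gamma$. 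Hence $\ker\Phi$ is finite.

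The main obstacle in this plan is the Rips step: one must know that the ellipticity of each $K_i$ in the limit $\R$-tree survives the passage to a simplicial approximation with at most cyclic edge stabilizers, and that such an approximation exists in the first place. This is precisely where the relative Rips machine and the existence of a relative cyclic JSJ decomposition for $\f$ are essential, together with a verification that the limit tree has the stability properties required as input (handled by a standard shortening argument exploiting the infinite order of $[\phi]$).
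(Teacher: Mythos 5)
Your proof is correct and follows essentially the same strategy as the paper's: Bestvina--Paulin limiting gives a stable $\R$-tree with each $K_i$ elliptic, Rips theory converts this to a cyclic splitting relative to the $K_i$, contradicting the hypothesis, and residual finiteness of $\Out(\f)$ upgrades finite kernel to commensurability. The paper avoids your case split (infinite-order element versus torsion subgroup) by directly taking a sequence of distinct elements of the kernel as input to the Paulin argument, but this is a cosmetic difference.
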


\begin{proof}
   If the kernel is infinite, we have a
  sequence $f_j\in Out(F_n)$ of distinct automorphisms
  acting as the identity on
  each $K_i$. Then after a subsequence the limiting procedure
  \cite{paulin, mb} yields a nontrivial stable $F_n$-$\R$-tree $T$
  with each $K_i$ elliptic and with all arc stabilizers cyclic
  (in fact, $T$ belongs to the boundary of Outer space). But then by
  \cite{bf, french} $F_n$ has a cyclic splitting with each $K_i$
  elliptic, contradiction.
\end{proof}

The next case is that $F_n$ admits splittings over infinite cyclic
subgroups with all $K_i$ elliptic, but no such free splittings. In
that case we can consider the cyclic JSJ decomposition $J$ of $F_n$
\cite{rips-sela,dunwoody-sageev,papa-fu,gl:jsj}
relative to $\vec K$. Recall that $J$ is a cocompact simplicial $F_n$-tree in
which all $K_i$ are elliptic.

The JSJ decomposition $J$ has a lot of structure: there are two kinds
of vertices -- rigid and quadratically hanging (QH). The latter
correspond to surfaces with boundary, where boundary components
represent the incident edge groups or are commensurable with some
$K_i$. The surfaces may be nonorientable, but are required to contain
intersecting 2-sided simple closed curves, so e.g. a
pair of pants is not allowed. The key universal property that $J$
possesses is the following: if $g\in F_n$ is elliptic in $J$ and not
contained in a QH vertex group as a nonboundary element, then $g$ is
elliptic in every cyclic splitting of $F_n$ in which all $K_i$ are
elliptic. In particular, a rigid vertex group $V$ does not admit any
cyclic splittings with $\partial V$ and all $K_i$ contained in $V$
elliptic.

The JSJ tree is usually not unique up to
  isomorphism, but the set of elliptic elements is independent of the
  choice and
  the set of possible JSJ trees forms a contractible deformation space
  \cite{forester, clay, gl:deformation}. All edge stabilizers of $J$
  are cyclic, the set of noncyclic vertex stabilizers is independent
  of the choice of $J$ (maximal noncyclic subgroups of the set of
  elliptics), and so is the set of commensurability classes of edge
  stabilizers (intersections of noncyclic point stabilizers).

  In particular, for every noncyclic vertex group $V$,
  the set $\partial V$ of commensurability classes of edge groups
  contained in $V$ is well defined. In other words, if $f$ is an outer
  automorphism of $F_n$ that preserves $\vec K$, it may not fix $J$,
  but it preserves the conjugacy classes of noncyclic vertex groups
  (it is allowed to permute them)
  and commensurability classes of edge groups. It also preserves QH
  vertex groups and their peripheral subgroups.

  \begin{lemma}\label{jsj}  
    Suppose $F_n$ admits splittings over infinite cyclic
subgroups with all $K_i$ elliptic, but no such free splittings.
   Then $Ext(F_n;\vec K)$ is commensurable with the product
    of finitely many generalized McCool groups.
  \end{lemma}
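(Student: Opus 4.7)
The plan is to analyze $Ext(F_n,\vec K)$ via the cyclic JSJ decomposition $J$ of $F_n$ relative to $\vec K$ discussed in the paragraphs preceding the lemma. After passing to a finite index subgroup $H\le GMc(\vec K)$, I may assume that every $\phi\in H$ fixes each $F_n$-orbit of vertex and each commensurability class of edge stabilizer of $J$. Each $\phi$ then induces, on a chosen representative vertex group $V$, an outer automorphism $\phi_V$ preserving $\partial V$ and each $K_i\subset V$, hence $\phi_V\in M_V:=GMc(V;\partial V\cup\{K_i\subset V\})$. When $V$ is rigid, $V$ is a finitely generated free group and $M_V$ is a generalized McCool group in the sense of this paper; when $V$ is QH, $M_V$ is the relative mapping class group of the underlying surface, itself a generalized McCool group of a free group.

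Consider the restriction $\Psi:H\to\prod_V M_V$. Its kernel is the twist group $T$ along the edges of $J$, a finitely generated abelian group, and standard JSJ reassembly shows $\Psi(H)$ has finite index in $\prod_V M_V$, the defect being a $\Z/2$ edge-compatibility condition at each edge. The key observation is that $T\subset\ker\Phi$: for a twist $\tau_z$ along an edge $e$ with $K_i$ conjugate into a vertex group $V$ on one side of $e$, one may choose a representative of $\tau_z$ in $Aut(F_n)$ equal to the identity on $V$, so $\tau_z$ acts trivially on $K_i$ and hence trivially in $Out'(K_i)$. Therefore $\Phi$ factors through $\Psi$, and $Ext(F_n,\vec K)$ equals, up to finite index, the image of $\prod_V M_V$ under $\prod_V\mathrm{res}_V$, where $\mathrm{res}_V:M_V\to\prod_{K_i\subset V}Out'(K_i)$ is restriction.

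I then analyze $\ker\mathrm{res}_V$ by vertex type. For rigid $V$, the universal property of the JSJ gives that $V$ admits no cyclic or free splitting with $\partial V\cup\{K_i\subset V\}$ elliptic, so Lemma~\ref{point} applied to the free group $V$ shows that $M_V\to\prod_{S\in\partial V\cup\{K_i\}}Out'(S)$ has finite kernel; since each $S\in\partial V$ is cyclic (so $Out'(S)$ is finite), $\ker\mathrm{res}_V$ is also finite, and the epimorphism $M_V\twoheadrightarrow\mathrm{res}_V(M_V)$ combined with residual finiteness of $Out(F_n)$ yields $M_V\sim\mathrm{res}_V(M_V)$. For QH $V$, every $K_i\subset V$ is commensurable with a boundary component and hence cyclic, making $\prod_{K_i\subset V}Out'(K_i)$ finite; so $\mathrm{res}_V(M_V)$ is finite (although $\ker\mathrm{res}_V$, containing for example the pure mapping class group, may be large).

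Assembling: $Ext(F_n,\vec K)$ is commensurable to $\prod_V\mathrm{res}_V(M_V)$, which in turn is commensurable to $\prod_{V\text{ rigid}}M_V$ (the rigid factors contribute commensurable groups and the QH factors contribute only finite groups), a finite product of generalized McCool groups of the free vertex groups of $J$. I expect the main obstacle to be a careful bookkeeping of the several finite-index and finite-kernel steps --- the initial pass to $H$, the $\Z/2$ edge-compatibility defining $\Psi(H)$ in $\prod_V M_V$, the finite kernel of $\mathrm{res}_V$ for rigid $V$, and the finite images at QH vertices --- verifying that they combine into a single commensurability statement, invoking residual finiteness of $Out(F_n)$ to convert finite-kernel epimorphisms into commensurabilities.
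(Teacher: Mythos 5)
Your argument is correct and follows essentially the same route as the paper: pass to the cyclic JSJ of $F_n$ relative to $\vec K$, reduce to the noncyclic vertex groups (QH vertices and cyclic $K_i$ contributing only finitely much), and invoke Lemma~\ref{point} at the rigid vertices. The paper goes directly to the commensurability $Ext(F_n,\vec K)\sim\prod_j Ext(V_j,\vec H'_j)$ by exhibiting mutually inverse (up to finite index) restriction and extension maps between $F_n$ and the rigid $V_j$, and then applies Lemma~\ref{point} to each $Ext(V_j,\vec H'_j)$; you instead factor $\Phi$ through $\Psi\colon H\to\prod_V M_V$ and use Lemma~\ref{point} to conclude that $\mathrm{res}_V$ has finite kernel at rigid $V$. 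These are equivalent, though your detour through the twist group to justify the factorization is superfluous: since each relevant $K_i$ lies in a unique vertex group $V(K_i)$ which is its own normalizer, the induced element of $Out'(K_i)$ depends only on the induced element of $Out\big(V(K_i)\big)$, so $\Phi|_H=(\prod_V \mathrm{res}_V)\circ\Psi$ holds by definition and one never needs to identify $\ker\Psi$. One small omission on your side: $K_i$'s commensurable to edge groups may lie in more than one vertex group, so the assignment $K_i\mapsto V(K_i)$ is not well defined for them; as the paper notes explicitly, such $K_i$ can be discarded because $Out'(K_i)$ is then finite.
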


  \begin{proof}
  There are two kinds of $K_i$, the ones that are commensurable to
  edge groups of $J$ or to boundary components of QH vertex groups,
  and the ones that are not -- therefore their representatives fix
  only vertices.

  To fix ideas, we now assume that all $K_i$ are of the latter
  kind. Then each $K_i$ has a unique noncyclic vertex group (up to
  conjugacy) $V(K_i)$ that contains it and these vertex groups are
  rigid. Noncyclic vertex groups are their own normalizer, and
  moreover, if $gKg^{-1}<V$ for some $K<V$ and $g\in F_n$ then either
  $g\in V$ or $K$ is contained in an edge group.  Thus $K_i$ gives a
  unique conjugacy class of subgroups of $V(K_i)$.  Let
  $V_1,V_2,\cdots,V_k$ be the representatives of vertex groups that
  occur in this way and let $\vec H_j$ be the tuple of associated
  subgroups $K_i$ that occur in $V_j$. We then claim that the group
  $Ext(F_n;\vec K)$ contains a subgroup of finite index isomorphic to
  $$Ext(V_1;\vec H'_1)\times\cdots\times Ext(V_k;\vec H'_k)$$ where
  $\vec H'_j$ is obtained from $\vec H_j$ by appending $V_j$-conjugacy
  classes of cyclic subgroups in $\partial V_j$, one representative of
  each commensurability class. Note that some groups in $\partial V_j$
  could be conjugate in $F_n$ but not in $V_j$; however, the number of
  conjugacy classes up to commensurability in $V_j$ we obtain in this
  way is still finite and does not exceed the number of $V_j$-orbits
  of edges incident to the vertex fixed by $V_j$. 
  Since $V_j$ does not admit a cyclic splitting
  with $\vec H_j$ and $\partial V_j$ elliptic, Lemma \ref{point}
  finishes the proof. To prove the claim, note that every simultaneous
  extension permutes the vertex groups, hence fixes the conjugacy
  classes of the $V_j$, and it permutes the conjugacy classes in
  $\partial V_j$. After taking a suitable finite index subgroup of
  $Ext(F_n;\vec K)$, there
  will be simultaneous extensions that fix $\partial V_j$ and all
  peripheral subgroups of QH vertex groups. Conversely,
  if automorphisms can be simultaneously extended to the $V_j$'s
  fixing $\partial V_j$, then they can be further extended by the
  identity to the rest of $F_n$.

  In the general case, when some $K_i$ are commensurable to edge
  groups or to boundary components of QH vertex groups, we ignore
  such $K_i$ and we reach the same conclusion as above since our
  extensions fix all edge groups of $J$ and all peripheral subgroups
  of QH vertex groups.
  \end{proof}

\begin{proof}[Proof of Theorem \ref{ext}]
  Let $\{G_1,\cdots,G_k\}$ be the
  smallest free factor system such that each $K_i$ is conjugate
  into some $G_j$. This $G_j$ will then be unique since free factors
  are malnormal. Note that if this free factor system is $\{F_n\}$
  then the conclusion follows from Lemma \ref{jsj}. 
  After reindexing, $\vec K$ yields $k$ tuples 
  $\vec H_1,\cdots,\vec H_k$ of conjugacy classes
  of subgroups of $G_1,\cdots,G_k$, and we observe that
  $$Ext(F_n;\vec K)=Ext(G_1;\vec H_1)\times \cdots\times Ext(G_k;\vec
  H_k)$$
  Indeed, if a tuple of automorphisms of each $\vec H_j$
  simultaneously extends to $G_j$, then these extensions can be
  combined to an automorphism of $G_1*\cdots *G_k$ which can then be
  further extended to $F_n$ since the free product is a free factor of
  $F_n$. Conversely, any automorphism of $F_n$ that preserves $\vec K$
  will also preserve each $G_j$ and the statement follows.

  By Lemma \ref{jsj} each $Ext(G_j;\vec H_j)$ is commensurable to a
  product of generalized McCool groups, so the same holds for
  $Ext(F_n;\vec K)$.
\end{proof}

\bibliographystyle{amsalpha}
\bibliography{../egersten.bfh/ref.bib}

\providecommand{\bysame}{\leavevmode\hbox to3em{\hrulefill}\thinspace}
\providecommand{\MR}{\relax\ifhmode\unskip\space\fi MR }
\providecommand{\MRhref}[2]{%
  \href{http://www.ams.org/mathscinet-getitem?mr=#1}{#2}
}
\providecommand{\href}[2]{#2}
\begin{thebibliography}{Whi36b}

\bibitem[AFV08]{afv:presentation}
Heather Armstrong, Bradley Forrest, and Karen Vogtmann, \emph{A presentation
  for {${\rm Aut}(F_n)$}}, J. Group Theory \textbf{11} (2008), no.~2, 267--276.
  \MR{2396963}

\bibitem[BBM07]{bbm:torelli}
Mladen Bestvina, Kai-Uwe Bux, and Dan Margalit, \emph{Dimension of the
  {T}orelli group for {${\rm Out}(F_n)$}}, Invent. Math. \textbf{170} (2007),
  no.~1, 1--32. \MR{2336078}

\bibitem[Bes88]{mb}
Mladen Bestvina, \emph{Degenerations of the hyperbolic space}, Duke Math. J.
  \textbf{56} (1988), no.~1, 143--161. \MR{932860}

\bibitem[BF95]{bf}
Mladen Bestvina and Mark Feighn, \emph{Stable actions of groups on real trees},
  Invent. Math. \textbf{121} (1995), no.~2, 287--321. \MR{1346208}

\bibitem[Bro84]{kb:presentation}
Kenneth~S. Brown, \emph{Presentations for groups acting on simply-connected
  complexes}, J. Pure Appl. Algebra \textbf{32} (1984), no.~1, 1--10.
  \MR{739633}

\bibitem[CL99]{cl:conjugacy}
Marshall~M. Cohen and Martin Lustig, \emph{The conjugacy problem for {D}ehn
  twist automorphisms of free groups}, Comment. Math. Helv. \textbf{74} (1999),
  no.~2, 179--200. \MR{1691946}

\bibitem[Cla05]{clay}
Matt Clay, \emph{Contractibility of deformation spaces of {$G$}-trees}, Algebr.
  Geom. Topol. \textbf{5} (2005), 1481--1503. \MR{2186106}

\bibitem[Cul84]{mc:finite}
M.~Culler, \emph{Finite groups of outer automorphisms a free group},
  Contributions to group theory, Contemp. Math., vol.~33, Amer. Math. Soc.,
  1984, pp.~197--207.

\bibitem[CV86]{cv:moduli}
Marc Culler and Karen Vogtmann, \emph{Moduli of graphs and automorphisms of
  free groups}, Invent. Math. \textbf{84} (1986), no.~1, 91--119.
  \MR{87f:20048}

\bibitem[DS99]{dunwoody-sageev}
M.~J. Dunwoody and M.~E. Sageev, \emph{J{SJ}-splittings for finitely presented
  groups over slender groups}, Invent. Math. \textbf{135} (1999), no.~1,
  25--44. \MR{1664694}

\bibitem[FH]{fh:upg}
M.~Feighn and M.~Handel, \emph{The conjugacy problem for {UPG} elements of
  {${\rm Out}(F_n)$}}, arXiv:1906.04147.

\bibitem[FH99]{fh:coherence}
Mark Feighn and Michael Handel, \emph{Mapping tori of free group automorphisms
  are coherent}, Ann. of Math. (2) \textbf{149} (1999), no.~3, 1061--1077.
  \MR{2000i:20050}

\bibitem[For02]{forester}
Max Forester, \emph{Deformation and rigidity of simplicial group actions on
  trees}, Geom. Topol. \textbf{6} (2002), 219--267. \MR{1914569}

\bibitem[FP06]{papa-fu}
K.~Fujiwara and P.~Papasoglu, \emph{J{SJ}-decompositions of finitely presented
  groups and complexes of groups}, Geom. Funct. Anal. \textbf{16} (2006),
  no.~1, 70--125. \MR{2221253}

\bibitem[Ger84]{sg:whitehead}
S.~M. Gersten, \emph{On {W}hitehead's algorithm}, Bull. Amer. Math. Soc. (N.S.)
  \textbf{10} (1984), no.~2, 281--284. \MR{85g:20051}

\bibitem[GL07]{gl:deformation}
Vincent Guirardel and Gilbert Levitt, \emph{Deformation spaces of trees},
  Groups Geom. Dyn. \textbf{1} (2007), no.~2, 135--181. \MR{2319455}

\bibitem[GL15]{gl:mccool}
\bysame, \emph{Mc{C}ool groups of toral relatively hyperbolic groups}, Algebr.
  Geom. Topol. \textbf{15} (2015), no.~6, 3485--3534. \MR{3450769}

\bibitem[GL17]{gl:jsj}
\bysame, \emph{J{SJ} decompositions of groups}, Ast\'{e}risque (2017), no.~395,
  vii+165. \MR{3758992}

\bibitem[GL21]{gl:random}
\bysame, \emph{Random subgroups, automorphisms, splittings}, Ann. Inst. Fourier
  (Grenoble) \textbf{71} (2021), no.~4, 1363--1391. \MR{4398238}

\bibitem[GLP94]{french}
D.~Gaboriau, G.~Levitt, and F.~Paulin, \emph{Pseudogroups of isometries of
  {${\bf R}$} and {R}ips' theorem on free actions on {${\bf R}$}-trees}, Israel
  J. Math. \textbf{87} (1994), no.~1-3, 403--428. \MR{1286836}

\bibitem[Gro75]{grossman}
Edna~K. Grossman, \emph{On the residual finiteness of certain mapping class
  groups}, J. London Math. Soc. (2) \textbf{9} (1974/75), 160--164. \MR{405423}

\bibitem[Kal92]{sk:gersten}
Sa{\v{s}}o Kalajd{\v{z}}ievski, \emph{Automorphism group of a free group:
  centralizers and stabilizers}, J. Algebra \textbf{150} (1992), no.~2,
  435--502. \MR{93h:20039}

\bibitem[KLV01]{klv:conjugacy}
Sava Krsti\'{c}, Martin Lustig, and Karen Vogtmann, \emph{An equivariant
  {W}hitehead algorithm and conjugacy for roots of {D}ehn twist automorphisms},
  Proc. Edinb. Math. Soc. (2) \textbf{44} (2001), no.~1, 117--141. \MR{1879214}

\bibitem[KM97]{km:torelli}
Sava Krsti\'{c} and James McCool, \emph{The non-finite presentability of {${\rm
  IA}(F_3)$} and {${\rm GL}_2({\bf Z}[t,t^{-1}])$}}, Invent. Math. \textbf{129}
  (1997), no.~3, 595--606. \MR{1465336}

\bibitem[KV93]{kv:equivariant}
Sava Krsti\'{c} and Karen Vogtmann, \emph{Equivariant outer space and
  automorphisms of free-by-finite groups}, Comment. Math. Helv. \textbf{68}
  (1993), no.~2, 216--262. \MR{1214230}

\bibitem[Lev]{gl:outboundary}
Gilbert Levitt, \emph{{M}c{C}ool groups and stabilizers on the boundary of
  outer space}, joint work with Vincent Guirardel, research announcement, {\tt
  http://people.maths.ox.ac.uk/drutu/conference/levitt.pdf}.

\bibitem[McC75]{jm:fp}
James McCool, \emph{Some finitely presented subgroups of the automorphism group
  of a free group}, J. Algebra \textbf{35} (1975), 205--213. \MR{0396764}

\bibitem[Pau88]{paulin}
Fr\'{e}d\'{e}ric Paulin, \emph{Topologie de {G}romov \'{e}quivariante,
  structures hyperboliques et arbres r\'{e}els}, Invent. Math. \textbf{94}
  (1988), no.~1, 53--80. \MR{958589}

\bibitem[RS97]{rips-sela}
E.~Rips and Z.~Sela, \emph{Cyclic splittings of finitely presented groups and
  the canonical {JSJ} decomposition}, Ann. of Math. (2) \textbf{146} (1997),
  no.~1, 53--109. \MR{1469317}

\bibitem[Sta83]{js:folding}
J.~Stallings, \emph{Topology of finite graphs}, Inv. Math. \textbf{71} (1983),
  551--565.

\bibitem[SV87]{sv:euler}
John Smillie and Karen Vogtmann, \emph{Automorphisms of graphs, {$p$}-subgroups
  of {${\rm Out}(F_n)$} and the {E}uler characteristic of {${\rm Out}(F_n)$}},
  J. Pure Appl. Algebra \textbf{49} (1987), no.~1-2, 187--200. \MR{920521}

\bibitem[Vog17]{kv:contractible}
Karen Vogtmann, \emph{Contractibility of outer space: reprise}, Hyperbolic
  geometry and geometric group theory, Adv. Stud. Pure Math., vol.~73, Math.
  Soc. Japan, Tokyo, 2017, pp.~265--280. \MR{3728502}

\bibitem[Whi36a]{jhcw:certain}
J.~H.~C. Whitehead, \emph{On certain sets of elements in a free group},
  Proc.~London~Math.~Soc. \textbf{41} (1936), 48--56.

\bibitem[Whi36b]{jhcw:equivalent}
\bysame, \emph{On equivalent sets of elements in a free group}, Ann. of Math.
  (2) \textbf{37} (1936), no.~4, 782--800. \MR{1 503 309}

\bibitem[Zim81]{bz:uber}
Bruno Zimmermann, \emph{\"{U}ber {H}om\"{o}omorphismen {$n$}-dimensionaler
  {H}enkelk\"{o}rper und endliche {E}rweiterungen von {S}chottky-{G}ruppen},
  Comment. Math. Helv. \textbf{56} (1981), no.~3, 474--486. \MR{639363}

\end{thebibliography}

\end{document}